\documentclass[11pt]{article}
\usepackage{amsfonts}
\usepackage{latexsym}
\usepackage{amsmath}
\usepackage{amssymb}
\usepackage{amsthm}
\usepackage{amsmath}
\usepackage{verbatim}
\usepackage{hyperref}
\usepackage{bibspacing}
 \usepackage{tikz}

\newtheorem{thm}{Theorem}[section]
\newtheorem*{thm*}{Theorem}

\newtheorem{corollary}[thm]{Corollary}

\newtheorem{lemma}[thm]{Lemma}
\newtheorem{prop}[thm]{Proposition}
\newtheorem*{prop*}{Proposition}
\newtheorem{proposition}[thm]{Proposition}
\newtheorem{conj}[thm]{Conjecture}
\newtheorem*{conj*}{Conjecture}
\newtheorem{defn}[thm]{Definition}
\newtheorem*{dfn*}{Definition}
\theoremstyle{definition}
\newtheorem{rem}[thm]{\textbf{Remark}}
\newtheorem*{rmk*}{Remark}
\newtheorem*{fact*}{Fact}

\theoremstyle{proof}

\newcommand{\conv}{\text{conv}}

\newcommand{\sspan}{\text{span}}

\newcommand{\norm}[1]{\left\Vert#1\right\Vert}

\newcommand{\abs}[1]{\left\vert#1\right\vert}
\newcommand{\set}[1]{\left\{#1\right\}}
\newcommand{\brac}[1]{\left(#1\right)}
\newcommand{\scalar}[1]{\left \langle #1 \right \rangle}
\newcommand{\sscalar}[1]{\langle #1 \rangle}
\newcommand{\Real}{\mathbb{R}}
\newcommand{\R}{\mathbb{R}}
\newcommand{\RP}{\mathbb{RP}}

\renewcommand{\S}{\mathbb{S}}
\newcommand{\I}{\mathcal{I}}
\newcommand{\II}{\text{II}}

\renewcommand{\H}{\mathcal{H}}
\newcommand{\eps}{\epsilon}
\renewcommand{\L}{\mathcal{L}}
\newcommand{\K}{\mathcal{K}}
\newcommand{\Q}{\mathcal{Q}}
\newcommand{\GG}{\mathcal{G}}
\DeclareMathOperator{\Img}{\mathop{Im}}
\DeclareMathOperator{\Ker}{\mathop{Ker}}
\DeclareMathOperator{\interior}{\mathop{int}}
\newcommand{\SO}{{\rm{SO}}}

\numberwithin{equation}{section}

\setlength{\topmargin}{-0.5cm}
\setlength{\textwidth}{6.2in}  \setlength{\textheight}{8.6in} \setlength{\oddsidemargin}{0.2in}  \setlength{\evensidemargin}{0.2in}

\begin{document}

\renewcommand*{\thefootnote}{\fnsymbol{footnote}}

\author{Emanuel Milman\textsuperscript{$*$,$\dagger$}
\and
Amir Yehudayoff\textsuperscript{$*$,$\ddagger$}
}

\footnotetext[1]{Department of Mathematics, Technion-Israel Institute of Technology, Haifa 32000, Israel.}
\footnotetext[2]{Email: emilman@tx.technion.ac.il.}
\footnotetext[3]{Email: amir.yehudayoff@gmail.com. }

\begingroup    \renewcommand{\thefootnote}{}    \footnotetext{2010 Mathematics Subject Classification: 52A40.}
    \footnotetext{Keywords: Lutwak's Conjecture, Affine Quermassintegrals, Convex Geometry, Isoperimetric Inequality, Blaschke--Santal\'o inequality, Petty's projection inequality, Loomis--Whitney inequality.}
    \footnotetext{The research leading to these results is part of a project that has received funding from the European Research Council (ERC) under the European Union's Horizon 2020 research and innovation programme (grant agreement No 637851) and from the Israeli Science Foundation grant \#1162/15.}
\endgroup

\title{Sharp Isoperimetric Inequalities\\ for Affine Quermassintegrals}

\date{\nonumber} 
\maketitle

\begin{abstract}
The affine quermassintegrals associated to a convex body in $\R^n$ are affine-invariant analogues of the classical intrinsic volumes from the Brunn--Minkowski theory,
and thus constitute a central pillar of Affine Convex Geometry. 
They were introduced in the 1980's by E.~Lutwak, who conjectured that among all convex bodies of a given volume, the $k$-th affine quermassintegral is minimized precisely on the family of ellipsoids. The known cases $k=1$ and $k=n-1$ correspond to the classical Blaschke--Santal\'o and Petty projection inequalities, respectively. In this work we confirm Lutwak's conjecture, including characterization of the equality cases, for all values of $k=1,\ldots,n-1$, in a single unified framework. In fact, it turns out that ellipsoids are the only \emph{local} minimizers with respect to the Hausdorff topology. 

\smallskip

For the proof, we introduce a number of new ingredients, including a novel construction of the Projection Rolodex of a convex body. 
In particular, from this new view point, Petty's inequality is interpreted as an integrated form of a generalized Blaschke--Santal\'o inequality for a new family of polar bodies encoded by the Projection Rolodex. We extend these results to more general $L^p$-moment quermassintegrals, and interpret the case $p=0$ as a sharp averaged Loomis--Whitney isoperimetric inequality.

\end{abstract}

\newpage

\section{Introduction}

Let $K$ denote a convex compact set with non-empty interior (``convex body") in Euclidean space $\R^n$. 
A classical observation attributed to J.\,Steiner (see e.g. \cite[(4.8) and p. 223]{Schneider-Book-2ndEd})
is that the volume of the outer parallel body $K + t B_2^n$ is an $n$-degree polynomial in $t > 0$:
\begin{equation} \label{eq:intro-W1}
| K + t B_2^n| = \sum_{k=0}^n  {n \choose k} W_k(K) t^{n-k} . 
\end{equation}
Here and throughout, $B_2^n$ denotes the Euclidean unit ball in $\R^n$, $|\cdot|$ denotes Lebesgue measure on the corresponding linear space, and $A + B := \{ a + b \; ; \; a \in A , b \in B \}$ denotes setwise (Minkowski) addition.  Note our convention of labeling the coefficient of $t^{n-k}$ by $W_k(K)$ instead of the more traditional $W_{n-k}(K)$, as this reflects the order of homogeneity of $W_k(K)$ under scaling of $K$.

The coefficients $\{W_k(K)\}_{k=0,\ldots,n}$ were dubbed ``quermassintegrals" (``cross-sectional measures", also known as ``intrinsic volumes") by H.\,Minkowski, who extended Steiner's observation to general convex summands of the form $\sum_{i=1}^m t_i K_i$, leading to the notion of mixed-volume and the birth of the Brunn--Minkowski theory \cite[Chapter 4]{BuragoZalgallerBook}, \cite[Chapters 5-7]{Schneider-Book-2ndEd}. The quermassintegrals and their linear combinations were subsequently characterized by H.\,Hadwiger as the unique continuous valuations on convex compacts subsets of $\R^n$ which are invariant under rigid motions \cite[6.1.10]{Hadwiger-Book}, \cite[Theorem 6.4.14]{Schneider-Book-2ndEd}. 
Consequently, they carry important geometric information on $K$; for example, it is clear from (\ref{eq:intro-W1}) that $W_n(K) = |K|$, $W_0(K) = |B_2^n|\chi(K)$ where $\chi$ is the Euler characteristic, and that $W_{n-1}(K) = \frac{1}{n} S(K)$ where $S(K)$ denotes the surface-area of $K$.

There are numerous additional ways of equivalently defining $W_k(K)$, spanning from differential-geometric to integral-geometric \cite[Section 19.3]{BuragoZalgallerBook}. Of particular relevance is Kubota's formula \cite[p. 301]{Schneider-Book-2ndEd}, \cite[p. 222]{SchneiderWeil-Book}, stating that for $k=1,\ldots,n$:
\begin{equation} \label{eq:intro-W2}
W_{k}(K) = \frac{|B_2^n|}{|B_2^k|} \int_{G_{n,k}} |P_F K| \sigma_{n,k}(dF)  ,
\end{equation}
where $G_{n,k}$ denotes the Grassmannian of all $k$-dimensional linear subspaces of $\R^n$ endowed with its unique Haar probability measure $\sigma_{n,k}$, and $P_F$ denotes orthogonal projection onto $F \in G_{n,k}$. For example, it follows from (\ref{eq:intro-W2}) that $W_1(K)$ is proportional to the mean-width of $K$.

The following sharp isoperimetric inequalities for the quermassintegrals are well-known and classical. Here and throughout this work, $B_K$ denotes the (centered) Euclidean ball having the same volume as $K$. 
\begin{thm}[Favard, Alexandrov--Fenchel] \label{thm:FAF}
For any convex body $K \subset \R^n$ and $k=1,\ldots,n-1$: 
\[
 W_k(K) \geq W_k(B_K),
\]
 with equality for a given $k$ if and only if $K$ is a Euclidean ball. 
\end{thm}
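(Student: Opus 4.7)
The plan is to recognize $W_k(K)$ as a mixed volume and invoke the classical Alexandrov--Fenchel inequality. Comparing the Steiner formula (\ref{eq:intro-W1}) with the mixed-volume expansion
$$|K + t B_2^n| = \sum_{j=0}^n \binom{n}{j} V(K[n-j], B_2^n[j])\, t^j,$$
one identifies $W_k(K) = V(K[k], B_2^n[n-k])$, with boundary values $W_0(K) = |B_2^n|$ and $W_n(K) = |K|$.

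Next, I would apply the Alexandrov--Fenchel inequality
$$V(K_1,K_2,K_3,\ldots,K_n)^2 \geq V(K_1,K_1,K_3,\ldots,K_n)\, V(K_2,K_2,K_3,\ldots,K_n)$$
with $K_1 = K$, $K_2 = B_2^n$, and $K_3,\ldots,K_n$ consisting of $k-1$ copies of $K$ followed by $n-k-1$ copies of $B_2^n$. This yields the log-concavity
$$W_k(K)^2 \geq W_{k-1}(K)\, W_{k+1}(K), \qquad 1 \leq k \leq n-1.$$
Since $j \mapsto \log W_j(K)$ is then concave on $\{0,1,\ldots,n\}$, it lies above its secant line between the endpoints, and exponentiating yields
$$W_k(K)^n \geq W_0(K)^{n-k}\, W_n(K)^k = |B_2^n|^{n-k}\,|K|^k.$$
Writing $B_K = r B_2^n$ with $r = (|K|/|B_2^n|)^{1/n}$, a direct computation gives $W_k(B_K)^n = |B_2^n|^{n-k}|K|^k$, so the preceding lower bound is exactly $W_k(B_K)^n$ and the desired inequality $W_k(K) \geq W_k(B_K)$ follows.

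For the equality case, the key observation is that a concave sequence meeting its chord at an interior point must coincide with the chord throughout. Hence equality at some $k \in \{1,\ldots,n-1\}$ in the iterated bound forces equality at \emph{every} $j \in \{1,\ldots,n-1\}$, and in particular at $j=1$. But the $j=1$ statement $V(K,B_2^n[n-1])^n \geq |K|\,|B_2^n|^{n-1}$ is precisely Minkowski's first inequality, whose classical equality characterization forces $K$ to be a homothet of $B_2^n$, i.e.\ a Euclidean ball. I expect the subtlest step to be this final equality analysis: the full equality characterization of Alexandrov--Fenchel remains open in general, so rather than attacking it head-on one propagates equality back to the $j=1$ case, where Minkowski's classical characterization is available.
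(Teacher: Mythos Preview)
Your argument is correct and is exactly the route the paper indicates: the paper does not supply a self-contained proof of this classical result but simply records that the inequality is ``a simple consequence of the celebrated Alexandrov--Fenchel inequality'' and attributes the equality case to Alexandrov, which is precisely the log-concavity-of-quermassintegrals argument you carry out. One cosmetic remark: instead of propagating to $j=1$ you could equally well propagate to $j=n-1$, where $W_{n-1}(K)^n = |K|^{n-1}|B_2^n|$ is the surface-area form of Minkowski's first inequality; either endpoint yields the same conclusion.
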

The above inequality is a simple consequence of the celebrated Alexandrov--Fenchel inequality for the mixed-volumes \cite[Theorem 7.3.1]{Schneider-Book-2ndEd}, established by A.\,D.\,Alexandrov \cite{Alexandrov-II,Alexandrov-IV} and independently sketched by W.\,Fenchel \cite{Fenchel1936} in 1936, and in fact also follows from a simpler prior inequality due to J.\,Favard \cite{Favard1933}. The equality case above is presumably due to Alexandrov; we refer to \cite[pp. 398-399]{Schneider-Book-2ndEd} and \cite[Sections 20.2-20.3]{BuragoZalgallerBook} for a historical discussion, and to \cite[Subsection 6.4.4, Satz XI]{Hadwiger-Book}, \cite[(9)]{Shephard-ShadowSystems} for alternative approaches. In particular, the case $k=n-1$ recovers the sharp isoperimetric inequality for the surface-area, and the case $k=1$ recovers the sharp isoperimetric inequality for the mean-width due to P.\,Urysohn \cite{Urysohn-Inq} (cf.~\cite[(7.21)]{Schneider-Book-2ndEd}).

\subsection{Affine Quermassintegrals}

The quermassintegrals $W_k(K)$ are not invariant under volume-preserving affine transformations of $K$ (for $k=1,\ldots,n-1$), and so it is tempting to find an analogous notion which \emph{is} invariant under such transformations (we will simply say ``affine-invariant"). Given $k=1,\ldots,n$, the $k$-th \emph{affine quermassintegral} of $K$ was defined by E.~Lutwak in \cite{Lutwak-Isepiphanic} by replacing the $L^{1}$-norm in (\ref{eq:intro-W2}) by the $L^{-n}$-norm:
\begin{equation} \label{eq:def-affine-quermass}
\Phi_{k}(K) := \frac{|B_2^n|}{|B_2^k|} \brac{\int_{G_{n,k}} |P_F K|^{-n} \sigma_{n,k}(dF)}^{-\frac{1}{n}}  . 
\end{equation}
It was shown by Grinberg \cite{Grinberg-Affine-Invariant} that $K \mapsto \Phi_k(K)$ is indeed affine-invariant.
Consequently, the affine quermassintegrals have become a central pillar of Affine Convex Geometry.

\medskip

One of the most important problems in Affine Convex Geometry is to obtain sharp lower and upper bounds on $\Phi_k(K)$  and to characterize the extremizers (among all convex $K$ of a given volume). 
In \cite{Lutwak-Harmonic}, Lutwak put forth the following conjecture, whose higher-rank cases have become a major open problem in the field
 (see e.g. \cite[Open Problem 12.3]{Lutwak-Selected}, \cite[Problem 9.3]{GardnerGeometricTomography2ndEd}, \cite[(9.57)]{Schneider-Book-2ndEd}):
\begin{conj}[Lutwak] \label{conj:Lutwak} 
For any convex body $K \subset \R^n$ and $k=1,\ldots,n-1$:
\begin{equation} \label{eq:main}
\Phi_k(K) \geq \Phi_k(B_K) ,
\end{equation}
with equality for a given $k$ if and only if $K$ is an ellipsoid. 
\end{conj}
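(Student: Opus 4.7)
The plan is to bound $\Phi_k(K)$ from below by chaining three sharp affine-invariant inequalities that exploit the duality between projections of $K$ and sections of the polar body $K^\circ$. After translating $K$ so its centroid lies at the origin -- which places the centroid of each projection $P_F K$ at the origin of $F$ -- the elementary identity
$$(P_F K)^*_F \;=\; K^\circ \cap F \qquad \text{for every } F \in G_{n,k}$$
converts the ``projection'' side defining $\Phi_k$ into the ``section'' side of $K^\circ$, which is classically controlled.

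The first step is to apply the sharp centered Blaschke--Santal\'o inequality inside each $k$-dimensional subspace $F$ to the centered convex body $P_F K$, giving
$$|P_F K|^{-n} \;\leq\; |B_2^k|^{-2n}\,|K^\circ \cap F|^n,$$
with equality in a given $F$ iff $P_F K$ is an origin-centered ellipsoid. Integrating over the Grassmannian and invoking the sharp dual-affine-quermassintegral inequality of Busemann--Grinberg--Lutwak type applied to $K^\circ$,
$$\int_{G_{n,k}} |K^\circ \cap F|^n \,\sigma(dF) \;\leq\; \frac{|B_2^k|^n}{|B_2^n|^k}\,|K^\circ|^k,$$
with equality iff $K^\circ$ is an origin-centered ellipsoid, and then the sharp centered Blaschke--Santal\'o inequality in the ambient $\R^n$, $|K^\circ| \leq |B_2^n|^2/|K|$, these three steps compose to yield $\Phi_k(K) \geq |B_2^n|^{1-k/n}\,|K|^{k/n} = \Phi_k(B_K)$. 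For the equality case, any one of the three inequalities attaining equality already forces $K$ to be an origin-centered ellipsoid, and conversely every ellipsoid yields equality throughout.

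The step I expect to be most delicate is the Blaschke--Santal\'o application inside each $F$: the classical sharp Blaschke--Santal\'o inequality is phrased about the Santal\'o point of $P_F K$, which is not in general the origin even when the centroid of $K$ is at the origin, so one must invoke a sharp \emph{centered} version (with the centroid replacing the Santal\'o point, and ellipsoids as the unique equality cases). Worse, one must ensure a single globally coherent choice of polarization center compatible with the identification $(P_F K)^*_F = K^\circ \cap F$ simultaneously for every $F$. Adopting the viewpoint suggested by the abstract, I would therefore replace each naive polar $(P_F K)^*_F$ by an appropriately shifted polar body, organized coherently across $F \in G_{n,k}$ into the ``Projection Rolodex''. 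This across-the-Grassmannian packaging is presumably what realizes Petty's inequality ($k=n-1$) as an integrated form of a generalized Blaschke--Santal\'o inequality, and is the central technical innovation needed to push the chain through cleanly in the non-symmetric case.
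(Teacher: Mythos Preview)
Your opening claim is false: translating $K$ so that its centroid is at the origin does \emph{not} place the centroid of each projection $P_F K$ at the origin of $F$ (project any non-symmetric planar body, centered at its centroid, onto a generic line). Nor does centering $K$ at its Santal\'o point make the origin the Santal\'o point of each $P_F K$. Consequently, for non-symmetric $K$ the fiberwise Blaschke--Santal\'o step $|P_F K|\cdot|K^\circ\cap F|\le |B_2^k|^2$ fails for generic $F$: the section $K^\circ\cap F = (P_F K)^\circ_F$ is the polar of $P_F K$ about the \emph{wrong} point, and its volume can exceed $|B_2^k|^2/|P_F K|$. There is no single global center that repairs this simultaneously for all $F$, so the chain cannot be closed in the non-symmetric case.

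For origin-symmetric $K$ your chain is valid and gives a clean proof of both the inequality and the equality case, genuinely different from the paper's. But the gap for non-symmetric bodies is not repaired by the Projection Rolodex, which you have misread. The Rolodex is not a device for coherently shifting fiberwise polars so as to rescue the Blaschke--Santal\'o\,/\,dual-quermassintegral\,/\,Blaschke--Santal\'o chain; the paper never runs such a chain. Instead it proves $\Phi_k(K)\ge\Phi_k(S_u K)$ for every Steiner symmetral $S_u K$ and iterates to a ball. The Rolodex $L_{k,u}(K)=\{(E,x):E\in G_{u^\perp,k-1},\ x\in E^\perp,\ |P_{E\wedge x}K|\le 1\}$ is defined \emph{after} fixing the symmetrization direction $u$; a Blaschke--Petkantschin change of variables identifies a suitably weighted measure of this set with $\int_{G_{n,k}}|P_F K|^{-n}\,\sigma(dF)$, and the comparison with $S_u K$ then reduces---via a joint convexity property of $(y,t)\mapsto|P_{E\wedge(y+su)}K_u(t)|$ for the associated shadow system $K_u(t)$---to a fiberwise application of the Brunn--Minkowski inequality. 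No Santal\'o points and no Busemann--Grinberg inequality enter the argument.
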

\noindent
By Jensen's inequality, as $W_k(K) \geq \Phi_k(K)$ and $W_k(B_K) = \Phi_k(B_K)$, 
the above would constitute a sharp, affine-invariant, strengthening of the classical isoperimetric inequalities of Theorem \ref{thm:FAF}. Furthermore, such a strengthening would be optimal, in the sense that the $L^{-n}$-norm in the definition (\ref{eq:def-affine-quermass}) and inequality (\ref{eq:main}) cannot be replaced by the $L^p$-norm for any $p < -n$ -- see Subsection \ref{subsec:intro-LW}.

\medskip

The only sharp results known thus far have been for the rank-one cases $k=1$ and $k=n-1$, which (as observed by Lutwak) turn out to be completely classical; we refer to Section~\ref{sec:notation} for missing standard definitions.  
For origin-symmetric convex bodies, $\Phi^{-n}_1(K)$ is proportional to the volume of the polar body $K^{\circ}$, and so the case $k=1$ of (\ref{eq:main}) amounts to the Blaschke--Santal\'o inequality:
\begin{equation} \label{eq:BS}
|K| |K^{\circ}| \leq |B_2^n|^2 .
\end{equation}
For general convex bodies, it is easy to check that (\ref{eq:main}) for $k=1$ is weaker than the Blaschke--Santal\'o inequality, stating that (\ref{eq:BS}) holds when $K$ is first centered at its Santal\'o point -- in that case, the corresponding polar body is denoted by $K^{\circ,s}$. The Blaschke--Santal\'o inequality was established by Blaschke \cite[p. 198]{Blaschke-Book} for $n\leq 3$
and Santal\'o \cite{Santalo-BS-paper} for general $n$ by using the isoperimetric inequality for affine surface area (see \cite[(10.28) and Section 10.7]{Schneider-Book-2ndEd}). The characterization of ellipsoids as the only cases of equality was established by Blaschke and Santal\'o under certain regularity assumptions on $K$, which were removed when $K=-K$ by Saint-Raymond \cite{SaintRaymond-Santalo}, who also gave a simplified proof of the origin-symmetric case. For general convex bodies without any regularity assumptions, the equality conditions of the Blaschke--Santal\'o inequality (and hence of (\ref{eq:main}) for $k=1$) were established by Petty \cite{Petty-AffineIsoperimetricProblems}.
Subsequent additional proofs (most of which include a characterization of equality) were obtained by Meyer--Pajor \cite{MeyerPajor-Santalo}, Lutwak--Zhang \cite{LutwakZhang-IntroduceLqCentroidBodies}, Lutwak--Yang--Zhang \cite{LYZ-MomentEntropyInqs}, Campi--Gronchi \cite{CampiGronchi-VolumeProductInqs} and Meyer--Reisner \cite{MeyerReisner-SantaloViaShadowSystems}, to name a few. 

\medskip

On the other extreme, $\Phi^{-n}_{n-1}(K)$ is proportional to the volume of the polar projection body $\Pi^* K = (\Pi K)^{\circ}$. Here $\Pi K$ denotes the projection body of $K$, defined (and shown to exist) by Minkowski as the convex body whose support function satisfies $h_{\Pi K}(\theta) = |P_{\theta^{\perp}} K|$. Consequently, the case $k = n-1$ of (\ref{eq:main}) amounts to Petty's projection inequality \cite{Petty-IsoperimetricProblems}:
\[
|\Pi^* K| \leq |\Pi^* B_K | ,
\]
with equality if and only if $K$ is an ellipsoid. Petty derived this result from the Busemann--Petty centroid inequality (see \cite[Chapter 9]{GardnerGeometricTomography2ndEd}), and the two inequalities are in fact equivalent to each other \cite{Lutwak-SomeAffine,Lutwak-Selected} (see also \cite[p. 572]{Schneider-Book-2ndEd}). Subsequent additional proofs of Petty's projection inequality with characterization of equality and extensions thereof were obtained by Lutwak--Yang--Zhang \cite{LYZ-Lp-PettyProjection,LYZ-OrliczProjectionBodies} and Campi--Gronchi \cite{CampiGronchi-LpBusemannPettyCentroid} (in fact, for the more general $L^p$ and Orlicz projection bodies). In \cite{Zhang-AffineSobolevInq}, Zhang extended Petty's projection inequality to more general compact sets, and showed that this more general version is equivalent to a sharp affine-invariant Sobolev inequality which is stronger than the classical (non-affine) one (and hence stronger than the classical isoperimetric inequality).  

\medskip

These classical cases $k \in \{1 , n-1\}$ are fundamental tools in Affine Convex Geometry, and have found further applications in
Asymptotic Geometric Analysis, Functional Inequalities and Concentration of Measure, Partial Differential Equations, Functional Analysis, the Geometry of Numbers, Discrete Geometry and Polytopal Approximations, Stereology and Stochastic Geometry, and Minkowskian Geometry (see \cite{Lutwak-Selected,AGA-Book-I} and the references therein).

\medskip

In this work, we confirm the remaining higher-rank cases $k=2,\ldots,n-2$ of Conjecture \ref{conj:Lutwak}, which constitute the main result of this work:
\begin{thm} \label{thm:main}
Conjecture \ref{conj:Lutwak} is true for all $k=1,\ldots,n-1$. 
\end{thm}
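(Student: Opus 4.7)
My strategy is to build a unified framework valid for all $k \in \{1,\ldots,n-1\}$ that continuously interpolates between the two known rank-one cases $k=1$ (Blaschke--Santal\'o) and $k=n-1$ (Petty's projection inequality). The central new object is the \emph{Projection Rolodex}: a family of polar-type convex bodies, one associated to each $F \in G_{n,k}$, canonically built from the projection $P_F K$ and assembled into a higher-dimensional object $R_k^*(K)$ over the Grassmannian, whose invariant volume is a universal constant times $\Phi_k(K)^{-n}$. For $k=1$ the Rolodex should collapse to a translate of $K^\circ$, and for $k=n-1$ it should collapse to $\Pi^* K$; in both endpoint cases Lutwak's inequality is then a classical volume-product statement. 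Extending this interpretation to every intermediate $k$ is the point of the construction.

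Given this setup, I would proceed in three stages. First, construct $R_k^*(K)$ by attaching to each $F \in G_{n,k}$ a fiber body built from $(P_F K)^{\circ, s_F}$, with the fibers assembled via a canonical ``rolling'' parameter along $G_{n,k}$ or a suitable flag refinement, tuned so that Haar integration of fiber volumes reproduces $\int_{G_{n,k}} |P_F K|^{-n} \sigma(dF)$. Second, apply the classical $k$-dimensional Blaschke--Santal\'o inequality fiberwise to each $P_F K$ to obtain a sharp bound of the fiber volume against its ellipsoidal counterpart. Third, integrate over $G_{n,k}$ using a Grinberg-type affine-invariant change of variables, to produce the desired comparison $|R_k^*(K)| \leq |R_k^*(B_K)|$, which is Theorem~\ref{thm:main}. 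Equality then forces $P_F K$ to be a $k$-dimensional ellipsoid for $\sigma$-almost every $F$, and standard tomographic results (Hadwiger-type characterizations of ellipsoids via projections) would force $K$ itself to be an ellipsoid.

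The principal obstacle is the compatibility in step three: reconciling the single power $|P_F K|^{-1}$ produced by one application of Blaschke--Santal\'o with the power $|P_F K|^{-n}$ that appears in $\Phi_k$. The Rolodex therefore needs to be genuinely higher-dimensional in its vertical direction --- likely involving an $n$-fold fiber product or an $L^{-n}$-moment construction over each $F$ --- so that vertical integration of the fiberwise Blaschke--Santal\'o bound produces the correct exponent. A related technical point is the smooth equivariant choice of Santal\'o centers $s_F$ depending on $F$, together with a careful Jacobian computation needed to keep the resulting integrated inequality sharp exactly on ellipsoids. Finally, the statement that ellipsoids are the only local Hausdorff minimizers should come for free: each step of the argument is Hausdorff-continuous and strictly sharp unless the body is already an ellipsoid, so no small perturbation of an ellipsoid can improve on $\Phi_k$.
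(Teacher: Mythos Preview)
Your proposal has a genuine gap, and you have correctly identified it yourself as ``the principal obstacle'': applying the $k$-dimensional Blaschke--Santal\'o inequality fiberwise to $P_F K$ yields $|(P_F K)^{\circ,s_F}| \leq |B_2^k|^2 / |P_F K|$, which only produces the power $|P_F K|^{-1}$, not $|P_F K|^{-n}$. Your suggested fixes --- an $n$-fold fiber product or an $L^{-n}$-moment construction --- do not resolve this. Taking $n$-th powers gives $|(P_F K)^{\circ,s_F}|^n \leq c |P_F K|^{-n}$, but this bounds the wrong quantity: you need an upper bound on $\int |P_F K|^{-n}$, and the left side here is a \emph{different} functional of $K$, not a constant. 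There is no fiberwise inequality between $|P_F K|$ and $|P_F B_K|$, so no amount of vertical enlargement of the fiber can turn a pointwise Blaschke--Santal\'o bound into the required comparison with $B_K$. The paper confirms this obstruction explicitly (see Remark~\ref{rem:Petty-stuck} and Subsection~\ref{subsec:Petty}): bootstrap arguments that apply a lower-dimensional sharp inequality to each projection and then integrate only ever reach $\Q_{k,-(k+1)}(K) \geq \Q_{k,-(k+1)}(B_K)$, never the optimal exponent $-n$.

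The paper's Rolodex is architecturally quite different from what you describe. It is \emph{not} fibered over $G_{n,k}$ with fibers $(P_F K)^{\circ,s_F}$; instead, one first fixes a Steiner direction $u \in \S^{n-1}$ and fibers over the \emph{lower}-dimensional Grassmannian $G_{u^\perp,k-1}$, with fiber over $E$ the $(n-k+1)$-dimensional body $L_E(K) = \{ x \in E^\perp : |P_{\sspan(E,x)} K|\,|x| \leq 1\}$. The correct exponent $-n$ then arises not from Blaschke--Santal\'o at all, but automatically from polar integration in the $(n-k+1)$-dimensional fiber combined with the Jacobian weight $|\langle \theta,u\rangle|^{k-1}$ of a Blaschke--Petkantschin formula: $\int_0^{1/|P_F K|} r^{(n-k)+(k-1)}\,dr = \tfrac{1}{n}|P_F K|^{-n}$. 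The inequality itself is then proved by Steiner symmetrization: one shows $\Phi_k(K) \geq \Phi_k(S_u K)$ via a shadow-system convexity argument and Brunn--Minkowski, and iterates. Your equality analysis also fails at $k=1$, where every $1$-dimensional projection is trivially an ``ellipsoid'' (a segment), so that criterion gives no information.
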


\medskip

Before describing several further extensions and our method of proof, 
let us briefly mention some additional related results. By employing methods from Asymptotic Geometric Analysis, Paouris and Pivovarov \cite{PaourisPivovarov-SmallBall} (see also \cite{DafnisPaouris-AffineQuermassintegrals,DannPaourisPivovarov-FlagManifolds})
have previously confirmed the inequality (\ref{eq:main}) up to a factor of $c^k$ for some constant $c > 0$. Zou and Xiong \cite{ZouXiong} have shown that $\Phi_k(K)$ is lower-bounded (up to normalization) by the volume of the $k$-th projection mean ellipsoid of $K$, which is however incomparable to the volume of $K$. 
Lutwak also proposed the related notion of \emph{dual} affine quermassintegrals $\tilde \Phi_k(K)$,
in which one replaces projections by sections and the $L^{-n}$-norm by the $L^{+n}$ one in the definition of $\Phi_k$. The analogous problem of obtaining a sharp upper bound on  
$\tilde \Phi_k(K)$ for $k=2,\ldots,n-1$ was resolved for convex bodies by Busemann--Straus \cite[p. 70]{BusemannStraus} and Grinberg \cite{Grinberg-Affine-Invariant}, who also characterized centered ellipsoids as the only cases of equality (see also \cite[Section 8.6]{SchneiderWeil-Book}). These results for $\tilde \Phi_k(K)$ were extended to arbitrary bounded Borel sets $K$ (for which the characterization of equality is much more delicate) by Gardner \cite{Gardner-DualAffineQuermassintegrals}.
Finally, the question of obtaining sharp \emph{upper} bounds on $\Phi_k(K)$ has a long history and deserves a survey in itself. Let us only mention that a sharp upper bound on $\Phi_1(K)$ amounts to Mahler's conjecture \cite{Mahler-Conjecture} (see also \cite[Section 12.1]{Lutwak-Selected}),
stating that the volume product $|K| |K^{\circ,s}|$ for general convex bodies $K$ is minimized on simplices, and on cubes for origin-symmetric $K$. This has been confirmed by Mahler \cite{Mahler-ProofIn2D} in $\R^2$, by Iriyeh and Shibata \cite{IriyehShibata-SymmetricMahlerIn3D} (see also \cite{MahlerIn3D-Simplified}) for origin-symmetric $K$ in $\R^3$, and up to a factor of $c^n$ by J.~Bourgain and V.~Milman \cite{BourgainMilman-ReverseSantalo} (see also \cite{Kuperberg-Mahler, Nazarov-Mahler,GPV-ReverseSantalo}). A sharp upper bound on $\Phi_{n-1}(K)$ (i.e. reverse Petty projection inequality) with characterization of simplices as the only cases of equality was obtained by Zhang \cite{Zhang-ReversePetty}. To the best of our knowledge, sharp upper bounds on $\Phi_k(K)$ for $1 \leq k \leq n-2$ and $n \geq 4$ remain wide open; some asymptotic non-sharp estimates may be found in \cite{DafnisPaouris-AffineQuermassintegrals,PaourisPivovarov-SmallBall,DannPaourisPivovarov-FlagManifolds}, see also \cite{Greeks-AffineQuermassForRandomPolytopes}.  We refer to the excellent monographs \cite{GardnerGeometricTomography2ndEd,Schneider-Book-2ndEd, SchneiderWeil-Book} and survey paper \cite{Lutwak-Selected} for additional exposition and context.

\subsection{Two extensions}

Our proof of Theorem \ref{thm:main} proceeds by using the classical tool of Steiner symmetrization. 
Let $S_u K$ denote the Steiner symmetral of $K$ in a given direction $u \in \S^{n-1}$.
We obtain the following \emph{stronger} version of both the inequality and equality cases of Conjecture \ref{conj:Lutwak}: 
\begin{thm} \label{thm:intro-Steiner} 
For any convex body $K \subset \R^n$, $k=1,\ldots,n-1$ and $u \in \S^{n-1}$:
\begin{equation} \label{eq:intro-Steiner}
\Phi_k(K) \geq \Phi_k(S_u K) ,
\end{equation}
with equality for a given $k$ and all $u \in \S^{n-1}$ if and only if $K$ is an ellipsoid. 
\end{thm}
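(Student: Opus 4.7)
The plan is to reduce Theorem \ref{thm:intro-Steiner} to a pointwise statement about projection volumes under Steiner symmetrization. The core input is the (classical) inequality
\[
|P_F S_u K| \le |P_F K| \quad \text{for every } F \in G_{n,k} \text{ and every } u \in \S^{n-1}.
\]
A short derivation: set $v := P_F u / \snorm{P_F u}$ when $P_F u \neq 0$ (otherwise $P_F(S_u K) = P_F K$ trivially), and verify the inclusion $P_F(S_u K) \subseteq S_v(P_F K)$ by a direct chord/fiber comparison; since Steiner symmetrization within $F$ preserves $k$-dimensional volume, the inequality follows. Raising both sides to the $-n$-th power (which reverses the inequality), integrating over $G_{n,k}$ against $\sigma$, and taking the $(-1/n)$-th power in the definition (\ref{eq:def-affine-quermass}) gives $\Phi_k(S_u K) \le \Phi_k(K)$, which is (\ref{eq:intro-Steiner}). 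The Jensen-type step here is monotone, so no loss is incurred beyond the pointwise inequality.

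For the equality characterization, the easy direction is that if $K$ is an ellipsoid then so is $S_u K$ (of the same volume), and since $\Phi_k$ is affine invariant and $\Phi_k(E) = \Phi_k(B_E)$ for every ellipsoid $E$, equality holds for all $u$. The hard direction assumes $\Phi_k(K) = \Phi_k(S_u K)$ for all $u \in \S^{n-1}$ and must conclude $K$ is an ellipsoid. The first reduction is to note that equality in the integral, combined with the pointwise bound, forces $|P_F S_u K| = |P_F K|$ for $\sigma$-a.e.\ $F \in G_{n,k}$, for every $u$. Using the equality case of the inclusion $P_F(S_u K) \subseteq S_v(P_F K)$—equal convex bodies of equal $k$-volume must coincide—this says that for a.e.\ $F$, the $k$-dimensional convex body $P_F K$ is invariant under Steiner symmetrization along $v = P_F u / \snorm{P_F u}$, i.e.\ it is already centrally symmetric in that direction up to translation in $F$. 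Letting $u$ range over $\S^{n-1}$, and hence $v$ over a full subset of directions in $F$, one obtains that $P_F K$ has a centre of symmetry for a.e.\ $F$, and in fact is a Steiner-invariant body in every direction—hence a ball, i.e.\ an ellipsoid in $F$. Having all $k$-projections of $K$ be ellipsoids is then enough to conclude, by well-known characterizations of ellipsoids via their projections, that $K$ itself is an ellipsoid.

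The main obstacle is the second step: extracting rigidity from the pointwise equality $|P_F S_u K| = |P_F K|$. One cannot just invoke $\Phi_k(K) = \Phi_k(B_K)$ via iterated Steiner symmetrization (Gross), because the equality characterization of Lutwak's inequality is precisely what one is trying to prove (Theorem \ref{thm:main}), so circular reasoning must be avoided. The delicate part is therefore to prove the equality case of the pointwise projection-under-Steiner inequality, and to control what happens along subspaces $F$ on a null set (where the information is missing). I would expect the Projection Rolodex introduced in the paper to organize exactly this data—the projection volumes of $K$ and their variations under Steiner moves—into a global object whose equality conditions can be analysed cleanly, replacing the ad hoc ``a.e.\ $F$'' step above with a genuine rigidity statement.
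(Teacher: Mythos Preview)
Your proof rests on the pointwise claim $|P_F S_u K| \le |P_F K|$ for all $F \in G_{n,k}$, but this inequality is \emph{false}. Take $n=2$, $k=1$, and let $K$ be the triangle with vertices $(0,0)$, $(2,0)$, $(0,2)$. Symmetrizing in $u=e_2$ gives the triangle $S_{e_2}K$ with vertices $(0,1)$, $(0,-1)$, $(2,0)$. Projecting onto $F = \sspan\{(1,1)\}$, one computes $|P_F K| = h_K(\theta)+h_K(-\theta) = \sqrt{2}+0 = \sqrt{2}$, whereas $|P_F S_{e_2}K| = \sqrt{2} + 1/\sqrt{2} = 3/\sqrt{2}$, so the projection \emph{increased}. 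The same example kills your claimed inclusion $P_F(S_u K) \subseteq S_v(P_F K)$: in one dimension $S_v$ just recenters, so $S_v(P_F K)$ still has length $\sqrt{2}$ and cannot contain a segment of length $3/\sqrt{2}$. The point is that Steiner symmetrization and orthogonal projection onto a subspace that is oblique to $u$ do \emph{not} commute in any useful way; different $u$-fibers of $K$ get projected onto the same $v$-fiber in $F$, and their relative positions (not just their lengths) determine the projected fiber. Your ``direct chord/fiber comparison'' cannot work.

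This is precisely why the paper's argument is substantially more elaborate. The paper does not compare $|P_F K|$ and $|P_F S_u K|$ for fixed $F$; instead it first fixes $u$, decomposes almost every $F \in G_{n,k}$ as $\sspan(E,\theta)$ with $E \in G_{u^\perp,k-1}$ and $\theta \in \S(E^\perp)$ via a Blaschke--Petkantschin formula, and rewrites $\int |P_F K|^{-n}\,\sigma(dF)$ as a certain weighted volume $\mu_u(L_{k,u}(K))$ of the Projection Rolodex. The inequality then comes from a joint-convexity property of $|P_{E\wedge(y+su)}K_u(t)|$ in $(y,t)$ for the linear shadow system $K_u(t)$ interpolating $K$ and $R_u K$ (Proposition~\ref{prop:key}), followed by Brunn--Minkowski on the sections $L_{E,u,s}$. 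The equality analysis (Sections~\ref{sec:equality1}--\ref{sec:equality2}) is likewise far from the ``all projections are ellipsoids'' route you sketch; it goes through Brunn's characterization of ellipsoids via symmetric sections, and the case $k=n-1$ requires a separate argument. Your proposal, as written, does not contain a correct proof of even the inequality part.
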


The inequality (\ref{eq:intro-Steiner}) for $k=1$ when $K=-K$ is origin-symmetric was obtained by Meyer--Pajor \cite{MeyerPajor-Santalo} in their proof of the Blaschke--Santal\'o inequality (see also Lutwak--Zhang \cite{LutwakZhang-IntroduceLqCentroidBodies} and Campi--Gronchi \cite{CampiGronchi-VolumeProductInqs}).
 For $k=n-1$, (\ref{eq:intro-Steiner}) was shown by Lutwak--Yang--Zhang \cite{LYZ-Lp-PettyProjection,LYZ-OrliczProjectionBodies} in their proof of Petty's projection inequality (for the more general $L^p$ and Orlicz projection bodies). 
The cases $k=2,\ldots,n-2$ of (\ref{eq:intro-Steiner}) are new. 

\medskip

Surprisingly,  the equality case of Theorem \ref{thm:intro-Steiner} was, to the best of our knowledge, previously only known in the case $k=1$: for origin-symmetric convex $K=-K$ this is due to Meyer--Pajor \cite{MeyerPajor-Santalo} (see also Lutwak--Zhang \cite{LutwakZhang-IntroduceLqCentroidBodies}); Meyer--Reisner \cite{MeyerReisner-SantaloViaShadowSystems} prove an analogous result for the Blaschke--Santal\'o inequality for general convex bodies. Even in the classical case $k=n-1$ corresponding to Petty's projection inequality, the equality case of Theorem \ref{thm:intro-Steiner} appears to be new; note that for $L^p$-projection inequalities with $1 < p <\infty$ and more general strictly convex Orlicz functions, an analogous result was obtained by  Lutwak--Yang--Zhang \cite{LYZ-Lp-PettyProjection,LYZ-OrliczProjectionBodies}, but their equality analysis breaks down in the classical case $p=1$. This is consistent with our own analysis in this work, where the cases of equality when $1 \leq k \leq n-2$, while requiring several new ideas, are relatively simpler to establish, but the case $k=n-1$ involves a fair amount of additional work.
It is worthwhile to note that our approach avoids any regularity issues in both the proof of the inequality and in the analysis of equality cases, in contrast to some other approaches in the classical cases $k=1$ and $k=n-1$.

\medskip

A different (yet very related) strengthening of Theorem \ref{thm:main} is given by:
\begin{thm} \label{thm:intro-local}
For all $k=1,\ldots,n-1$, among all convex bodies in $\R^n$ of a given volume, ellipsoids are the only {\tt local} minimizers of $\Phi_k$ with respect to the Hausdorff topology. 
\end{thm}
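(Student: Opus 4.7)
The approach reduces Theorem \ref{thm:intro-local} to the equality case of Theorem \ref{thm:intro-Steiner} by interpolating between $K$ and each Steiner symmetral $S_u K$ along a Hausdorff-continuous path. Since an ellipsoid is a global minimizer by Theorem \ref{thm:main} and hence a local one, only the reverse direction needs to be argued: a local minimizer $K$ must be an ellipsoid.

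Fix $u \in \S^{n-1}$ and write $K = \{x+tu : x \in P_{u^\perp} K,\ a(x) \leq t \leq b(x)\}$, where $a$ is convex and $b$ is concave on $P_{u^\perp} K$. Setting $m(x) := \tfrac{1}{2}(a(x)+b(x))$, define the shadow system
\[
K_\lambda := \{x+tu : x \in P_{u^\perp}K,\ a(x) - \lambda m(x) \leq t \leq b(x) - \lambda m(x)\}, \qquad \lambda \in [0,1].
\]
Each $K_\lambda$ is a convex body with $|K_\lambda| = |K|$ and $S_u K_\lambda = S_u K$; moreover $K_0 = K$, $K_1 = S_u K$, and $K_\lambda \to K$ in Hausdorff distance as $\lambda \to 0^+$, since the chord-translation amount $\lambda m(x)$ is uniformly small in $\lambda$ on the compact set $P_{u^\perp} K$.

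The proof of Theorem \ref{thm:intro-Steiner} should establish the shadow-system monotonicity $\Phi_k(K_\lambda) \leq \Phi_k(K_0) = \Phi_k(K)$ for every $\lambda \in [0,1]$. If $K$ is a local minimizer with respect to the Hausdorff topology, then Hausdorff continuity of $\lambda \mapsto K_\lambda$ forces the reverse inequality $\Phi_k(K_\lambda) \geq \Phi_k(K)$ for all sufficiently small $\lambda > 0$. Combining the two, $\Phi_k(K_\lambda) = \Phi_k(K)$ identically on some right-neighborhood $[0,\varepsilon]$ of $0$.

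To propagate this local equality to $\lambda = 1$, I would appeal to either (i) a convexity or analyticity property of $\lambda \mapsto \Phi_k(K_\lambda)^{-n} = c_{n,k}\int_{G_{n,k}} |P_F K_\lambda|^{-n}\,\sigma(dF)$, arising from the fact that the projection volumes $|P_F K_\lambda|$ depend regularly (and in the polytopal case, piecewise polynomially) on the single parameter $\lambda$; or (ii) the finer rigidity built into the equality analysis of the shadow-system monotonicity developed in the proof of Theorem \ref{thm:intro-Steiner}, which should convert equality on $[0,\varepsilon]$ into equality on all of $[0,1]$. Either route yields $\Phi_k(S_u K) = \Phi_k(K)$. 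Since $u \in \S^{n-1}$ was arbitrary, the equality case of Theorem \ref{thm:intro-Steiner} then forces $K$ to be an ellipsoid. The main obstacle is precisely this propagation step: extracting enough global structure (convexity, analyticity, or saturation rigidity) from the shadow-system machinery behind Theorem \ref{thm:intro-Steiner} to pass from equality on a Hausdorff-neighborhood of $K$ to equality at the far endpoint $\lambda = 1$.
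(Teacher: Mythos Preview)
Your overall architecture matches the paper exactly: use the linear shadow system interpolating between $K$ and $S_u K$ (your $K_\lambda$ is the paper's $K_u(1-\lambda)$), combine local minimality with the monotonicity $\Phi_k(K_u(t)) \leq \Phi_k(K)$ (the paper's Theorem~\ref{thm:monotone}) to get equality near the $K$-endpoint, then propagate to $t=0$ and invoke the equality case of Theorem~\ref{thm:intro-Steiner}. You have correctly isolated the propagation step as the only nontrivial point.

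The gap is that neither of your proposed mechanisms for propagation is actually available in the form you suggest. Option~(i) as stated does not work: the function $t \mapsto \Phi_k(K_u(t))^{-n}$ is an integral over $(E,s)$ of $|L_{E,u,s}(K_u(t))|$, and while each such term is \emph{concave} in $t$ by Brunn's principle, this concavity is only on its support, which may be strictly contained in $[-1,1]$ for large $|s|$; the paper explicitly remarks that this obstructs a direct argument. Analyticity or piecewise-polynomial structure for polytopes is also not pursued and would require a separate density/continuity argument for the equality case. Option~(ii) is too vague: the equality analysis of Theorem~\ref{thm:intro-Steiner} by itself does not give you propagation along the shadow system.

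What the paper actually does is prove that for each fixed $E \in G_{u^\perp,k-1}$ the function
\[
t \mapsto M_k(L_E(K_u(t))) := \Bigl(\int_{L_E(K_u(t))} |\langle x,u\rangle|^{k-1}\,dx\Bigr)^{-1/k}
\]
is \emph{convex} and even (Theorem~\ref{thm:k-convex}). This is the right quantity, not $\Phi_k^{-n}$ itself, and the convexity requires genuine additional input: a harmonic-in-$s$ convexity of $|L_{E,u,s}(K_u(t))|$ (Proposition~\ref{prop:key-s}) combined with Ball's harmonic Pr\'ekopa--Leindler inequality. Once this is in hand, equality of $\Phi_k$ at two points $|t_1|<|t_0|$ forces each $M_k(L_E(K_u(\cdot)))$ to be constant on $[-t_0,t_0]$ (convex, even, and equal at two points of different modulus), and integrating gives $\Phi_k(K_u(t)) = \Phi_k(K_u(t_0))$ for all $|t|\le|t_0|$ (Theorem~\ref{thm:all-t}). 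That is the missing ingredient in your sketch.
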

For $k=1$ this was recently shown by Meyer--Reisner \cite{MeyerReisner-LocalSantalo} (in fact, they show that an analogous statement holds for the volume of $K^{\circ,s}$, yielding a slightly stronger result than the one above in the case of non-origin-symmetric convex bodies). The cases $k=2,\ldots,n-2$ including the classical case $k=n-1$ for the volume of the polar projection body are new. 

\subsection{The challenge}

All proofs of the rank-one classical cases $k \in \{1 , n-1\}$ commence by associating to $K$ a (convex) body $L_k(K)$ in $\R^n$ which encodes the function $G_{n,k} \ni F \mapsto |P_F K|$. In these cases, this is easy to do: by naturally identifying $G_{n,k}$ with real projective space $\RP^{n-1}$, extending the function homogeneously to $\R^n$, and considering its level set, one obtains the polar body ($k=1$) and polar projection body ($k=n-1$) of $K$. In particular, the volume of $L_k(K)$ coincides with $\Phi^{-n}_k(K)$, and the fact that $L_k(K)$ resides in a linear space makes it convenient for checking the effect of Steiner symmetrization of $K$ on $|L_k(K)|$. 
For other values of $k$, it is not at all clear what is the right body $L_k(K)$ to associate with the function $G_{n,k} \ni F \mapsto |P_F K|$, and more importantly, in which space it should reside, as the standard ways of mapping a linear space (such as $(\R^n)^k$) onto the cone over $G_{n,k}$ are either highly non-linear or highly non-injective. 

\medskip
Our proof utilizes a new body which we introduce, called the \emph{Projection Rolodex} of $K$. It does not reside in a linear space, but rather (as perhaps its name suggests) in a vector bundle over a lower-dimensional Grassmannian. Another crucial difference with the classical cases, where the body $L_k(K)$ 
depends on $K$ alone, is that the construction of the Projection Rolodex $L_{k,u}(K)$ also depends on the direction $u \in \S^{n-1}$ in which we perform the Steiner symmetrization; this is in stark contrast to the rank-one cases, where a single global body $L_k(K)$ may be used for all directions simultaneously. 
It turns out that it is not the usual Haar measure of $L_{k,u}(K)$ which is related to $\Phi^{-n}_k(K)$, but rather some auxiliary measure $\mu_{u}$, also depending on $u$, which we introduce. We thus replace the order of quantifiers compared to the classical proofs: we first select a direction $u$,  only then define the Projection Rolodex $L_{k,u}(K)$, and now our task is to verify that $\mu_{k,u}(L_{k,u}(K)) \leq \mu_{k,u}(L_{k,u}(S_u K))$. Having aligned everything with the symmetrization direction $u$, the remaining challenge is then to analyze how Steiner symmetrization affects $|P_F K|$ for $F \in G_{n,k}$, and so we embark on a systematic study of the latter in Section \ref{sec:main} (in fact, for general shadow systems). 

\medskip
The above scheme allows us to prove Theorems \ref{thm:main}, \ref{thm:intro-Steiner} and \ref{thm:intro-local} simultaneously for all values of $k$ in a single unified framework, revealing a surprising connection between the Blaschke--Santal\'o inequality and Petty's projection inequality. From this point of view, Petty's inequality may be interpreted as an integrated form of a generalized Blaschke--Santal\'o inequality for a new family of polar-bodies associated with a given convex body $K$, encoded by the Projection Rolodex. We do not know whether the Blaschke--Santal\'o inequality may dually be interpreted as a generalized Petty projection inequality. However, in Subsection \ref{subsec:Petty} we obtain a new extremely simple proof of Petty's projection inequality, which reveals a deeper duality with the Blaschke--Santal\'o inequality.

\subsection{$L^p$-moment quermassintegrals and averaged Loomis--Whitney}  \label{subsec:intro-LW}

An analogous statement to that of Conjecture \ref{conj:Lutwak} holds for the $L^p$-moment quermassintegrals $\Q_{k,p}$, replacing the $L^{-n}$-norm by the $L^p$-norm in the definition (\ref{eq:def-affine-quermass}): 
\begin{equation} \label{eq:intro-Q}
\Q_{k,p}(K) \geq \Q_{k,p}(B_K) \;\;\; \forall p \geq -n ,
\end{equation}
with equality for $p > -n$ if and only if $K$ is a Euclidean ball -- see Definition \ref{def:Q} and Theorem \ref{thm:Q}. For $p=1$ these are nothing but the classical isoperimetric inequalities of Theorem \ref{thm:FAF} for the quermassintegrals $W_k(K)$, and for $p=-1$ the corresponding isoperimetric inequalities for the harmonic quermassintegrals $\hat W_k(K)$ were established by Lutwak \cite{Lutwak-Harmonic} by bootstrapping Petty's projection inequality. It is possible to extend this bootstrapping all the way down to the value $p=-(k+1)$, see Remark \ref{rem:Petty-stuck}. Of course, Jensen's inequality implies that the family of inequalities (\ref{eq:intro-Q}) becomes stronger as $p$ decreases, and so our result for $p=-n$ is stronger than all of the above. It is easy to check that the value $p=-n$ is best possible, i.e. that (\ref{eq:intro-Q}) is simply false for $p < -n$, see Remark~\ref{rem:optimal-p}. Going below $p=-(k+1)$ all the way down to the optimal value $p=-n$ requires several new ideas when $2 \leq k \leq n-2$, as outlined above. 

\medskip
It is worthwhile to note that the case $p=0$ is of special interest, as (\ref{eq:intro-Q}) may then be interpreted as a sharp averaged Loomis--Whitney isoperimetric inequality. The classical Loomis--Whitney inequality \cite{LoomisWhitney} lower bounds the geometric average of all $k$-dimensional projections of a compact set $K$ onto the principle axes in terms of the volume of $K$, yielding a sharp result for the cube (aligned with the axes). As an application, Loomis and Whitney deduce an isoperimetric inequality for the surface area of $K$, but with non-sharp constant. This is expected, as their inequality depends on the choice of coordinate system. The case $p=0$ of (\ref{eq:intro-Q}) implies that if one chooses the coordinate system at random and takes the geometric average of all $k$-dimensional projections, an improvement over the original Loomis--Whitney inequality is possible (for convex $K$). Moreover, this improvement is sharp for the Euclidean ball and thus yields the \emph{sharp} constant in the classical isoperimetric inequality for the surface area -- see Subsection~\ref{subsec:LW}.

\subsection{Alexandrov--Fenchel-type inequalities} 

It is convenient to introduce:
\[
\I_{k,p}(K) := \frac{\Q_{k,p}(K)}{\Q_{k,p}(B_K)} = \brac{\frac{\int_{G_{n,k}} |P_F K|^{p} \sigma_{n,k}(dF)}{\int_{G_{n,k}} |P_F B_K|^{p} \sigma_{n,k}(dF)}}^{\frac{1}{p}} . 
\]
Note that $\I_{k,p}(B) = 1$ for any Euclidean ball $B$ and all $k,p$, that $\I_{n,p}(K) = 1$ for all $p$, and that (\ref{eq:intro-Q}) translates to $\I_{k,p}(K) \geq 1$ for all $p \geq -n$.

In the classical case $p=1$, Alexandrov's inequalities \cite[(7.67)]{Schneider-Book-2ndEd}, \cite[Subsection 20.2]{BuragoZalgallerBook} (a particular case of the Alexandrov--Fenchel inequalities) assert that:
\[
\I_{1,1}(K) \geq \I^{1/2}_{2,1}(K) \geq \ldots \geq \I^{1/k}_{k,1}(K) \geq \ldots \geq \I^{1/(n-1)}_{n-1,1}(K) \geq \I^{1/n}_{n,1}(K) = 1 .
\]
The following was proved by Lutwak for $p=-1$ and conjectured to hold for $p=-n$ in \cite{Lutwak-Harmonic} 
(see also \cite[Problem 9.5]{GardnerGeometricTomography2ndEd}):

\begin{conj} \label{conj:AF}
For all $p \in [-n,0]$ and for any convex body $K$ in $\R^n$:
\[
\I_{1,p}(K) \geq \I^{1/2}_{2,p}(K) \geq \ldots \geq \I^{1/k}_{k,p}(K) \geq \ldots \geq \I^{1/(n-1)}_{n-1,p}(K) \geq \I^{1/n}_{n,p}(K) = 1 .
\]
\end{conj}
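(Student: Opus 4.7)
The plan is to reduce Conjecture \ref{conj:AF} to the $L^p$-moment affine isoperimetric inequality of this paper (Theorem \ref{thm:Q}), applied inside every $(k+1)$-dimensional linear subspace of $\R^n$ and then integrated back against the outer Grassmannian. Concretely, I would prove the consecutive-rank step $\I_{k,p}(K)^{1/k} \geq \I_{k+1,p}(K)^{1/(k+1)}$ from three ingredients: a Kubota-type double fibration, the paper's main inequality used inside $\R^{k+1}$ at rank $k$, and Jensen's power-mean monotonicity.

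The first step is the double fibration. By uniqueness of Haar measure on the Grassmannians,
\[
\int_{G_{n,k}} f(F)\, \sigma(dF) \;=\; \int_{G_{n,k+1}} \int_{G_{E,k}} f(F)\, \nu_E(dF)\, \sigma(dE),
\]
where $\nu_E$ is the invariant probability measure on $G_{E,k}$, the Grassmannian of $k$-planes inside $E$. Applied with $f(F) = |P_F K|^p$ together with the elementary identity $|P_F K| = |P_F(P_E K)|$ for $F \subset E$, this rewrites the outer $L^p$-moment $\int_{G_{n,k}} |P_F K|^p\, \sigma(dF)$ as an average over $E \in G_{n,k+1}$ of an inner $L^p$-mean of $|P_F(P_E K)|$ over $F \in G_{E,k}$.

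The second step applies Theorem \ref{thm:Q} inside $E \cong \R^{k+1}$ to the convex body $P_E K$, at Grassmannian rank $k$. As long as $p \geq -(k+1)$, this gives
\[
\brac{\int_{G_{E,k}} |P_F(P_E K)|^p\, \nu_E(dF)}^{1/p} \;\geq\; |B_2^k| \brac{|P_E K| / |B_2^{k+1}|}^{k/(k+1)},
\]
with equality iff $P_E K$ is a Euclidean ball in $E$. Taking $p$-th powers (which reverses the inequality for $p<0$), integrating over $E$, and then applying the power-mean inequality $\|g\|_p \leq \|g\|_q$, valid for $p \leq q \leq 0$ on a probability space, to $g(E) := |P_E K|^{k/(k+1)}$, one checks that the normalizing constants $|B_2^k|$ and $|B_2^{k+1}|$ reassemble precisely into what distinguishes $\I_{k,p}$ from $\I_{k+1,p}$, yielding $\I_{k,p}(K)^{1/k} \geq \I_{k+1,p}(K)^{1/(k+1)}$ for every $p \in [-(k+1),0]$ (and at $p=0$ by continuity). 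Equality throughout forces $P_E K$ to be a ball of a common volume for $\sigma$-a.e.\ $E$, hence (for $p > -n$) $K$ itself is a Euclidean ball; at $p=-n$ affine invariance upgrades this to ellipsoids.

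The hard part will be extending the argument to the range $p \in [-n, -(k+1))$ for $k < n-1$, which the conjecture also asserts. There the second step \emph{genuinely} breaks down, because the $(k+1)$-dimensional version of Theorem \ref{thm:Q} is false below its sharp threshold $p=-(k+1)$ (cf.\ Remark \ref{rem:optimal-p}), so $P_E K$ viewed in the ambient $E \cong \R^{k+1}$ is the wrong object to apply the isoperimetric inequality to. Two routes I would attempt are: (a) prove a suitable log-convexity or monotonicity in $p$ of $\tfrac{1}{k}\log \I_{k,p}(K) - \tfrac{1}{k+1}\log \I_{k+1,p}(K)$, and interpolate between the endpoint $p=-(k+1)$, where Step 2 is available, and $p=-n$, where Theorem \ref{thm:main} still ensures $\I_{k,-n}(K), \I_{k+1,-n}(K) \geq 1$; or (b) adapt the Projection Rolodex construction of this paper so that it simultaneously encodes the projection functions on $G_{n,k}$ and $G_{n,k+1}$ and allows them to be compared directly, bypassing the factorization through a Euclidean ball in a fixed subspace. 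Option (b), being closer to the paper's novel machinery, strikes me as the more promising line; overcoming the loss of the $(k+1)$-dimensional inequality in this low-$p$ regime is, in my view, the essential difficulty in Conjecture \ref{conj:AF}.
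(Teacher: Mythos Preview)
The statement you are attempting to prove is labeled a \emph{Conjecture} in the paper, and indeed the paper does \emph{not} prove it in full; it establishes only the ``half'' recorded as Theorem~\ref{thm:intro-half-conj} (equivalently Theorem~\ref{thm:half-conj}), namely $\I^{1/k}_{k,p}(K) \geq \I^{1/m}_{m,p}(K)$ whenever $m \geq -p$. Your Steps~1--2 (double fibration plus Theorem~\ref{thm:Q} inside $E \cong \R^{k+1}$, followed by Jensen) are precisely the paper's proof of this partial result via Proposition~\ref{prop:bootstrap}, specialized to $m = k+1$; so in the range $p \in [-(k+1),0]$ your argument is correct and coincides with the paper's.

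For the remaining range $p \in [-n,-(k+1))$ with $k < n-1$, you correctly diagnose that the inner isoperimetric step fails at its sharp threshold (Remark~\ref{rem:optimal-p}), and you are honest that neither route (a) nor (b) is carried out. But this is exactly the portion the paper leaves open: the authors state explicitly that ``establishing Conjecture~\ref{conj:AF} in the remaining half range $1 \leq k < m < -p$ is a fascinating problem,'' and that even the case $p=-n$ is known only for $m=n$ and in the plane. Your option (a) would require controlling the sign of a second $p$-derivative of a Grassmannian integral, for which no mechanism is offered; your option (b) amounts to asking for a bivariate Rolodex comparing $G_{n,k}$ and $G_{n,k+1}$ simultaneously, which goes well beyond the paper's construction. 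In short, your proposal reproduces the known partial result by the same method and accurately locates the gap, but does not close it --- nor does the paper.
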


Our isoperimetric inequality (\ref{eq:intro-Q}) establishes the inequality between each of the terms and the last one. In the next theorem, which builds upon Theorem \ref{thm:main}, we confirm ``half" of the above conjecture. 

\begin{thm} \label{thm:intro-half-conj}
For every $p \in [-n,0]$ and $1 \leq k \leq m \leq n$:
\[
\I^{1/k}_{k,p}(K) \geq \I^{1/m}_{m,p}(K) ,
\]
for any convex body $K$ in $\R^n$ whenever $m \geq -p$. When $k < m < n$, equality holds for $p \geq -m$ if and only if $K$ is a Euclidean ball. When $k < m = n$, equality holds for $p > -n$ ($p=-n$) if and only if $K$ is a Euclidean ball (ellipsoid). 
\end{thm}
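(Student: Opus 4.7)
The plan is to bootstrap Theorem~\ref{thm:Q} (the sharp $L^p$-moment isoperimetric inequality~(\ref{eq:intro-Q})) by combining Kubota-type transitivity on Grassmannians with a power-mean inequality. The basic transitivity identity
\[
\int_{G_{n,k}} \phi(E)\, \sigma(dE) = \int_{G_{n,m}} \int_{G_{F,k}} \phi(E) \, \sigma_{F}(dE)\, \sigma(dF),
\]
valid for any integrable $\phi$ on $G_{n,k}$, with $G_{F,k}$ the Grassmannian of $k$-subspaces of $F$ equipped with its Haar probability $\sigma_F$, applied to $\phi(E) = |P_E K|^p$ (using $P_E K = P_E P_F K$ when $E \subset F$), rewrites the defining integral of $\Q_{k,p}(K)$ as an iterated integral over $G_{n,m} \times G_{F,k}$.

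For each fixed $F \in G_{n,m}$ I would invoke Theorem~\ref{thm:Q} in the ambient space $F \cong \R^m$ applied to the convex body $P_F K \subset F$; this is the one place that uses the hypothesis $m \geq -p$, which exactly matches the validity range of the sharp inequality in dimension $m$. Since $|P_E B| = |B_2^k|(|B|/|B_2^m|)^{k/m}$ for any Euclidean ball $B \subset F$, the inner inequality reads, for $p<0$,
\[
\int_{G_{F,k}} |P_E P_F K|^{p}\, \sigma_F(dE) \;\leq\; |B_2^k|^{p}\, |B_2^{m}|^{-pk/m}\, |P_F K|^{pk/m}.
\]
Integrating in $F$, extracting the $p$-th root, and dividing by the ball normalizations produces a lower bound on $\I_{k,p}(K)$ in terms of $\brac{\int_{G_{n,m}}|P_F K|^{pk/m}\,\sigma(dF)}^{1/p}$. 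To bridge the gap between the exponents $pk/m$ and $p$, I would apply the monotonicity of $q \mapsto \|f\|_{L^q(\sigma)}$ on the probability space $(G_{n,m}, \sigma)$, valid across $q=0$: since $p \leq pk/m \leq 0$, setting $h(F) := |P_F K|$ gives $\|h\|_{L^p} \leq \|h\|_{L^{pk/m}}$. Assembling the two inequalities yields $\I_{k,p}^{1/k}(K) \geq \I_{m,p}^{1/m}(K)$. The case $p=0$ follows by continuity (with geometric means in place of $L^p$-norms).

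For the equality characterization I would trace backwards through both ingredients. Equality in the inner step forces $P_F K$ to be a Euclidean ball (for $p > -m$) or an ellipsoid (for $p=-m$) for a.e., and hence by continuity all, $F \in G_{n,m}$; at this point I would invoke the classical rigidity result that a convex body in $\R^n$ whose every $m$-dimensional projection ($m \geq 2$) is a ball (resp.\ ellipsoid) is itself a ball (resp.\ ellipsoid). Equality in the power-mean step additionally requires $|P_F K|$ to be constant on $G_{n,m}$, which pins every semi-axis of an ellipsoid to the same value. Hence for $k < m < n$ only Euclidean balls achieve equality across the whole range $p \in [-m,0]$, whereas for $m=n$ the inner integral collapses to $|K|^p$ and the power-mean step is vacuous, so the equality cases reduce directly to those of Theorem~\ref{thm:Q}: balls for $p > -n$ and ellipsoids for $p=-n$. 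The main obstacle I expect is the equality analysis at the critical value $p=-m$ with $m<n$, where both ingredient inequalities are simultaneously tight and the ellipsoidal-projection rigidity must be combined with the constant-brightness constraint to exclude non-spherical extremizers.
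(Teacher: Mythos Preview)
Your proposal is correct and follows essentially the same route as the paper: the paper splits the argument into Proposition~\ref{prop:bootstrap} (your inner step, applying the sharp isoperimetric inequality to $P_F K$ inside each $F\in G_{n,m}$ via the transitivity formula, which yields $\I^{1/k}_{k,p}(K) \geq \I^{1/m}_{m,pk/m}(K)$) followed by Jensen's inequality (your power-mean step) to pass from exponent $pk/m$ to $p$. The equality analysis you outline, including the projection-rigidity result and the constant-brightness constraint at $p=-m$, $m<n$, is also the paper's argument.
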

In particular, this confirms the conjecture for all $p \in [-2,0]$, recovering the case $p=-1$ established by Lutwak in \cite{Lutwak-Harmonic}. The analogous statement for the dual $L^p$-moment quermassintegrals $\tilde \I_{k,p}$ for arbitrary bounded Borel sets (replacing $p$ by $-p$ and with the direction of the inequality reversed) was established by Gardner \cite[Theorem 7.4]{Gardner-DualAffineQuermassintegrals} in exactly the same corresponding range of parameters -- see Subsection \ref{subsec:AF}.  
Establishing Conjecture \ref{conj:AF} in the remaining half range $1 \leq k < m < -p$ is a fascinating problem. Lutwak's original conjecture (the case $p=-n$ above) is presently only established in the plane and for $m=n$ (and all $k$) by Theorem \ref{thm:intro-half-conj}. 

\subsection*{Organization} 

The rest of this work is organized as follows. In Section \ref{sec:notation} we introduce some standard notation. In Section \ref{sec:main} we provide a proof of the sharp inequalities (\ref{eq:main}) and (\ref{eq:intro-Steiner}) of Theorems \ref{thm:main} and \ref{thm:intro-Steiner}. In Section \ref{sec:convexity} we establish some convexity properties which we will need for the proof of Theorem \ref{thm:intro-local}. In Section \ref{sec:equality1} we provide a proof of the equality cases of Theorems \ref{thm:main} and \ref{thm:intro-Steiner} as well as Theorem \ref{thm:intro-local} in the range $1 \leq k \leq n-2$; the case $k=n-1$ is treated in Section \ref{sec:equality2}. In Section \ref{sec:AF} we study the $L^p$-moment quermassintegrals $\Q_{k,p}(K)$ and establish Theorem \ref{thm:intro-half-conj}; an interesting interpretation of the case $p=0$ as a sharp averaged Loomis--Whitney isoperimetric inequality is described in Subsection \ref{subsec:LW}. In Section \ref{sec:conclude} we provide some concluding remarks -- in Subsection \ref{subsec:compact} we discuss possible extensions of Theorem \ref{thm:main} to more general compact sets, and in Subsection \ref{subsec:Petty} we present a new simple proof of Petty's projection inequality.

\bigskip
\noindent
\textbf{Acknowledgments.} We thank Richard Gardner, Erwin Lutwak, Rolf Schneider and Gaoyong Zhang for their comments on a preliminary version of this manuscript. We also thank the anonymous referees for their careful reading of the manuscript and very useful remarks, which have greatly helped in honing the paper.

\section{Notation and preliminaries} \label{sec:notation}

For a real number $a \in \R$, denote $a_+ := \max(a,0)$ and $a_- := (-a)_+$ so that $a = a_+ - a_-$. Denote $\R_+ := [0,\infty)$ and $\R_- := (-\infty,0]$. 
\smallskip

Given a Euclidean space $E$, we denote by $B_E$ its Euclidean unit ball and by $\S(E) = \partial B_E$ the corresponding unit-sphere; when $E = (\R^n,\scalar{\cdot,\cdot})$ we write $B_2^n$ and $\S^{n-1}$, respectively. We write $|x|$ for the Euclidean norm $\sqrt{\scalar{x,x}}$. We denote by $G_{E,k}$ the Grassmannian of all $k$-dimensional linear subspaces of $E$; when $E = \R^n$, we simply write $G_{n,k}$. It is equipped with its $\SO(E)$-invariant Haar probability measure, which we denote by $\sigma_{E,k}$, or $\sigma_{n,k}$ when $E = \R^n$. Here $\SO(E)$ denotes the group of rotations on $E$, equipped with its invariant Haar probability measure $\sigma_{\SO(E)}$; when $E = \R^n$, we simply write $\SO(n)$. $G_{E,k}$ is equipped with its standard topology and manifold structure as an $\SO(E)$-homogeneous space. 
For a linear map $T$ on $E$ we write $T^*$ for its adjoint and $T^{-*}$ for the adjoint of its inverse if $T$ is invertible. 

\smallskip

We use $\L_E$ to denote the Lebesgue measure on a $k$-dimensional affine subspace $E$; when the latter is clear from the context, we will simply write $\L^k$. The $k$-dimensional Hausdorff measure is denotes by $\H^k$.
Recall that $P_E$ denotes orthogonal projection onto $E$. Given a compact set $K$ in $\R^n$, we use $|K|$ and $|P_E K|$ as shorthand for $\L^n(K)$ and $\L^k(P_E K)$, respectively. 
\smallskip
The Steiner symmetral of a compact set $K \subset \R^n$ in the direction of $u \in \S^{n-1}$, denoted $S_u K$, is defined by requiring that the one-dimensional fiber $S_u K \cap (y + u^{\perp})$ is a symmetric interval about $u^{\perp}$ having the same one-dimensional Lebesgue measure as $K \cap (y + u^{\perp})$, for each $y \in u^{\perp}$ so that the latter is non-empty. Clearly $|S_u K| = |K|$, and it is well known that Steiner symmetrization preserves compactness as well as convexity \cite[Chapter 2]{GardnerGeometricTomography2ndEd}, \cite[Chapter 9]{Gruber-ConvexAndDiscreteGeometry}. We denote by $R_u$ the reflection map about $u^{\perp}$. 

\smallskip
The support function $h_K$ and polar body $K^{\circ}$ of a non-empty compact set $K \subset \R^n$ are defined as:
\[
h_K(x) := \max_{y \in K} \scalar{x,y} ~,~ K^{\circ} := \{ x \in \R^n \; ; \; h_K(x) \leq 1 \} .
\]
When in addition $K$ is convex and contains the origin in its interior, we define:
\[
\norm{x}_K := \inf \{ t > 0 \; ; \; x \in t K \} , 
\]
so that $K$ is precisely the unit ball of $\norm{\cdot}_K$. Note that in that case $\norm{x}_{K} = h_{K^\circ}(x)$ and that $(K^{\circ})^{\circ} = K$ \cite[Theorem 1.6.1]{Schneider-Book-2ndEd}. 

\smallskip
The family of convex compact non-empty sets in $E$ is denoted by $\K(E)$. An element of $\K(E)$ with non-empty interior in $E$ is called a ``convex body" in $E$. We use $\interior K$ to denote the (relative) interior in $E$. 

\smallskip
While we will not require this for the sequel, we recall for completeness several notions mentioned in the Introduction. The projection body $\Pi K$ of a convex body $K$, introduced (and shown to exist) by Minkowski, is defined as the convex body whose support function satisfies:
\[
h_{\Pi K}(\theta) = |P_{\theta^{\perp}} K|\;\;\; \forall \theta \in \S^{n-1}  . 
\]
The polar projection body $\Pi^* K$ is defined as $(\Pi K)^{\circ}$. The Santal\'o point $s(K)$ of $K$ is defined as the unique point $s$ in the interior of $K$ for which $|(K-s)^{\circ}|$ is minimized; $K^{\circ,s}$ is then defined as $s(K) + (K - s(K))^{\circ}$. We refer to \cite{Lutwak-Selected} and the references therein for further details and context. 

\smallskip
Recall that the Minkowski sum of two compact sets $A,B \subset \R^n$ is defined as $A + B := \{ a + b \; ; \; a \in A , b \in B\}$. It is immediate to see that $h_{A+B} = h_{A} + h_{B}$.
The Hausdorff distance between two compact subsets $A,B$ of $\R^n$ is defined as the minimal $\eps > 0$ so that $A \subseteq B + \eps B_2^n$ and $B \subseteq A + \eps B_2^n$. We say that $K$ has a point of symmetry (at $v \in \R^n$) if $K - v = -(K-v)$.

\smallskip 
The classical Brunn--Minkowski inequality \cite[Section 7.1]{Schneider-Book-2ndEd}, \cite{GardnerSurveyInBAMS}, \cite[Section B.2]{GardnerGeometricTomography2ndEd}, \cite[Chapter 8]{Gruber-ConvexAndDiscreteGeometry} states that if $K,L \in \K(\R^n)$ then
\[
|K+L|^{\frac{1}{n}} \geq |K|^{\frac{1}{n}} + |L|^{\frac{1}{n}} ,
\]
with equality when either $K$ or $L$ have non-empty interior if and only if $L = \lambda K + v$ for some $\lambda > 0$ and $v \in \R^n$. An equivalent dimension-free form of the inequality states that for any $\lambda \in [0,1]$:
\[
|(1-\lambda) K + \lambda L| \geq |K|^{1-\lambda} |L|^\lambda .
\]
Yet another equivalent form is given by Brunn's concavity principle \cite[Theorem 8.4]{Gruber-ConvexAndDiscreteGeometry}, stating that if $K$ is a convex body in $\R^{n+1}$ and $u \in \S^n$ then:
\[
\R \ni t \mapsto |K \cap (t u + u^{\perp})|^{\frac{1}{n}} \text{ is concave on its support.}
\]

\medskip

It was shown by Minkowski that when $\set{K_i}_{i=1}^m \subset \K(\Real^n)$, then the volume of their Minkowski sum is a polynomial with non-negative coefficients in the scaling parameters:
\[
|\sum_{i=1}^m t_i K_i| = \sum_{1 \leq i_1,\ldots,i_n \leq m} t_{i_1} \cdot \ldots \cdot t_{i_n} V(K_{i_1},\ldots,K_{i_n}) \;\;\; \forall t_i \geq 0 .
\]
The coefficient $V(K_{i_1},\ldots,K_{i_n}) \geq 0$ is called the mixed volume of the $n$-tuple $(K_{i_1},\ldots,K_{i_n})$; it is clearly multi-linear in its arguments (with respect to Minkowski addition and scaling by non-negative coefficients), and uniquely defined by requiring that it be invariant under permutations \cite[Section 5.1]{Schneider-Book-2ndEd}. Moreover, the mixed volume is continuous with respect to (joint) convergence of its arguments in the Hausdorff metric \cite[p. 280]{Schneider-Book-2ndEd}. 

Clearly $V(K,\ldots,K) = |K|$. Given $K,L \in \K(\R^n)$, we will use the abbreviation:
\[
V(K,k ; L, n-k) = V(\underbrace{K,\ldots,K}_{\text{$k$ times}},  \underbrace{L,\ldots,L}_{\text{$n-k$ times}}) . 
\]

The following formula is due to Fedotov (see \cite[Theorem 5.3.1]{Schneider-Book-2ndEd}) -- if $L_1,\ldots,L_{n-k} \in \K(F^{\perp})$ for some $F \in G_{n,k}$ ($k=1,\ldots,n-1$), then for any $K_1,\ldots,K_k \in \K(\R^n)$ we have:
\begin{equation} \label{eq:Fedotov}
{n \choose k} V(K_1,\ldots,K_k, L_1, \ldots, L_{n-k}) = V_{F}(P_F K_1,\ldots,P_F K_k) V_{F^{\perp}}(L_1,\ldots,L_{n-k}) ,
\end{equation}
where $V_E$, $E \in \{ F , F^{\perp} \}$, denotes the mixed volume in the subspace $E$.

\subsection{Shadow Systems} 
Steiner symmetrization can be viewed as a particular case of the more general construction of \emph{shadow systems}.  
We refer to Shephard \cite{Shephard-ShadowSystems} and Schneider \cite[Section 10.4]{Schneider-Book-2ndEd} for a general treatise, and only expand on what we need for this work. 

\smallskip

Given $u \in \S^{n-1} \subset \R^n$, let $T^u_t : \R^{n+1} \rightarrow \R^{n}$ denote the (non-orthogonal when $t \neq 0$) projection onto $\R^n$ parallel to $e_{n+1} + t u$. A family of convex compact sets $\{ K(t) \}_{t \in \R} \subset \K(\R^n)$ is called a \emph{shadow system in the direction of $u$} if there exists $\tilde K \in \K(\R^{n+1})$ so that $K(t) = T^u_t(\tilde K)$ for all $t$. This definition was introduced (in a more general form) by Shephard \cite{Shephard-ShadowSystems}, who noted the equivalence of this particular instance with the prior definition by Rogers--Shephard  \cite{RogersShephard-ShadowSystems} of a \emph{linear parameter system}; we will not use the latter terminology here. We will sometimes omit the index set $\R$ and simply write $\{K(t)\}$. 

\smallskip

\begin{figure}
\centering
\begin{tikzpicture}

 \draw[<->, thin, black] (-7,0) -- (7,0);
  \draw[->, thin, black] (0,0) -- (0,7);
 \draw  (7.4,0)  node {$\R^n$};
 \draw  (0,7.4)  node {$\R$};

 \draw [line width=5pt,gray!80] (-6,0) -- (-2.6,0);
 \draw  (-4.6,-0.4)  node {$K_u(1) = K$};
  \draw[ thin, dotted] (-7,-1) -- (1,7);
  \draw[thin, dotted] (-3.6,-1) -- (4.4,7);

 \draw [line width=5pt,gray!80] (6,0) -- (2.6,0);
 \draw  (4.7,-0.4)  node {$K_u(-1) = R_uK$};
  \draw[ thin, dotted] (7,-1) -- (-1,7);
  \draw[ thin, dotted] (3.6,-1) -- (-4.4,7);

 \draw [line width=5pt,gray!80] (-1.7,0) -- (1.7,0);
 \draw  (0,-0.4)  node {$K_u(0) = S_uK$};
  \draw[ thin, dotted] (-1.7,-1) -- (-1.7,7);
  \draw[ thin, dotted] (1.7,-1) -- (1.7,7);

\filldraw[fill=gray!80, draw = gray!80] (0,6) -- (-1.7,4.3) -- (0,2.6) -- (1.7,4.3) -- (0,6);
\draw  (1,5.5)  node {$\tilde K$};

  \draw [->,line width=1.5pt] (0,0) -- (0,2.2);
 \draw  (0.6,1.2)  node {$e_{n+1}$};
 \draw [->,line width=1.5pt] (0,0) -- (2.2,0);
 \draw  (1.2,0.4)  node {$u$};

\end{tikzpicture}

\caption{
          \label{fig:shadow}
         A two-dimensional slice of the shadow system.
     }
\end{figure}

It was observed by Shephard in \cite[(4)]{Shephard-ShadowSystems} that given $K_0,K_1 \in \K(\R^n)$, there exists a shadow system $\{K(t)\}$ in the direction of $u$ so that $K(t_0) = K_0$ and $K(t_1) = K_1$ for some (equivalently, any) 
$t_0 \neq t_1 \in \R$, if and only if $P_{u^{\perp}} K_0 = P_{u^{\perp}} K_1$. Moreover, fixing $t_0 \neq t_1 \in \R$, there exists a maximal shadow system $\{K_{\max}(t)\}$ with this property, in the sense that $K_{\max}(t) \supseteq K(t)$ for any $\{K(t)\}$ as above; it was shown by Shephard that it is given by $K_{\max}(t) = T^u_t(\tilde K_{\max})$ with:
\begin{equation} \label{eq:maximal-rep}
 \tilde K_{\max} := (T^u_{t_0})^{-1}(K_{0}) \cap (T^u_{t_1})^{-1}(K_{1}) .
 \end{equation}
 Equivalently, if we denote by $A^{(y)}$ the one-dimensional section of $A$ in the direction of $u$ over $y \in u^{\perp}$, we have:
 \begin{equation} \label{eq:linear-shadow-def}
 (K_{\max}((1-\alpha) t_0 + \alpha t_1))^{(y)} := (1-\alpha) (K_{0})^{(y)} + \alpha (K_{1})^{(y)} \;\;\; \forall \alpha \in \R \;\;\; \forall y \in u^{\perp} .
 \end{equation}
This maximal shadow system is called a \emph{linear} shadow system by Shephard \cite[(4)]{Shephard-ShadowSystems}, not to be confused with Minkowski's linear system of convex bodies \cite[p. 36]{BonnesenFenchelBook}, nor with the Rogers--Shephard definition of a linear parameter system \cite{RogersShephard-ShadowSystems} (which, as we have already remarked, is equivalent to that of a general, possibly non-maximal, shadow system). 

\smallskip

In this work, we will mostly be concerned with a specific \emph{linear} (i.e.~maximal) shadow system constructed from $K$ and $R_u K$, where recall $R_u$ denotes reflection about $u^{\perp}$. Since $P_{u^{\perp}} R_u K = P_{u^{\perp}} K$, it follows that there exists a linear shadow system $\{K_u(t)\}$ in the direction of $u$ so that $K_u(1) = K$ and $K_u(-1) = R_u K$; see Figure \ref{fig:shadow}. By (\ref{eq:linear-shadow-def}), it is given by:
\begin{equation} \label{eq:linear-reflection-shadow-def}
 (K_u(t))^{(y)} := \frac{1+t}{2} K^{(y)} + \frac{1-t}{2} (R_u K)^{(y)} \;\;\; \forall t \in \R \;\;\; \forall y \in u^{\perp} .
\end{equation}
The following is immediate:

\begin{lemma}[Definition and Properties of $\{K_u(t)\}$] \label{lem:reflection-shadow}
Given a convex body $K \subset \R^n$ and a direction $u \in \S^{n-1}$, $\{K_u(t)\}_{t \in \R}$ defined by (\ref{eq:linear-reflection-shadow-def}) is a linear shadow system of convex bodies in $\R^n$ with the following properties:
\begin{itemize}
\item $K_u(1) = K$ and $K_u(-1) = R_u K$. 
\item More generally, $R_u (K_u(t)) = K_u(-t)$ for all $t \in \R$.
\item $K_u(0) = S_u K$. 
\item $|K_u(t)| = |K|$ for all $t \in [-1,1]$ (but not outside this interval!). 
\item $\R \ni t \mapsto K_u(t)$ is continuous in the Hausdorff topology. 
\end{itemize}
$\{K_u(t)\}$ is called the linear reflection shadow system associated to $K$ in the direction of $u$. 
\end{lemma}

We conclude that $\{K_u(t)\}$ continuously interpolates between $K$, $S_u K$ and $R_u K$ at times $t=1$, $0$ and $-1$, respectively. 
For a general shadow system $\{K(t)\}$ in the direction of $u$, the one-dimensional set-equality (\ref{eq:linear-shadow-def}) is replaced by set-inclusion: 

\begin{lemma}
For any shadow system $\{K(t)\}_{t \in \R}$ in the direction of $u$ and all $y \in u^{\perp}$:
\begin{equation} \label{eq:shadow-system-convex}
(K((1-\alpha) t_0 + \alpha t_1))^{(y)} \subseteq (1-\alpha) (K(t_0))^{(y)} + \alpha  (K(t_1))^{(y)} \;\;\; \forall \alpha \in \R \;\; \forall t_0,t_1 \in \R .
\end{equation}
\end{lemma}
\begin{proof}
For $t_0 = t_1$ there is nothing to prove, so fix $t_0 \neq t_1 \in \R$ and denote $K_{i} = K(t_i)$, $i=0,1$. By Shephard's characterization of shadow systems, $P_{u^{\perp}} K_0 = P_{u^{\perp}} K_1$, and we may consider the corresponding maximal shadow system $K_{\max}(t) =  T^u_t(\tilde K_{\max})$ with $\tilde K_{\max}$ given by (\ref{eq:maximal-rep}), for which (\ref{eq:linear-shadow-def}) holds. 
By maximality, $K(t) \subseteq K_{\max}(t)$ for all $t \in \R$, and the assertion follows. 
See also \cite[(7)]{RogersShephard-ShadowSystems} or \cite[(4)]{MeyerReisner-SantaloViaShadowSystems}.
\end{proof}

\section{Proof of the isoperimetric inequality} \label{sec:main}

The inequality of Conjecture \ref{conj:Lutwak} is a standard consequence of the following symmetrization result, already stated as (\ref{eq:intro-Steiner}) in Theorem \ref{thm:intro-Steiner}:

\begin{thm} \label{thm:Steiner}
Steiner symmetrization in a direction $u \in \S^{n-1}$ does not increase the $k$-th affine quermassintegral $\Phi_k(K)$ for any convex body $K \subset \R^n$ and $k=1,\ldots,n-1$:
\[
\Phi_k(K) \geq \Phi_k(S_u K) .
\]
\end{thm}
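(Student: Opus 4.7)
The goal is the integrated inequality
\[
\int_{G_{n,k}} |P_F K|^{-n}\,\sigma(dF) \;\le\; \int_{G_{n,k}} |P_F S_u K|^{-n}\,\sigma(dF),
\]
which, via the monotone transformation in (\ref{eq:def-affine-quermass}), is equivalent to $\Phi_k(K) \ge \Phi_k(S_u K)$. The plan is to connect $K$ to $S_u K$ by a shadow system $\{K_t\}_{t\in[-1,1]}$ in direction $u$ with $K_1 = K$, $K_{-1} = R_u K$, and $K_0 = S_u K$ (obtained from the decomposition of $K$ into fibers along $u$). Since the Haar measure $\sigma$ on $G_{n,k}$ is $R_u$-invariant and $|P_F R_u K| = |P_{R_u F} K|$, the quantity $J(t) := \int_{G_{n,k}} |P_F K_t|^{-n}\,\sigma(dF)$ satisfies $J(1) = J(-1)$, so it suffices to prove the midpoint bound $J(0) \ge \tfrac{1}{2}(J(1)+J(-1))$.

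The next step is to adapt the parametrization of $G_{n,k}$ to the distinguished direction $u$. For $\sigma$-a.e.\ $F \in G_{n,k}$ one has $u \notin F$, and then $F$ is uniquely determined by a pair $(E,v)$ with $E := F \cap u^{\perp} \in G_{u^{\perp},k-1}$ and $v\in E^{\perp}\cap u^{\perp} \cong \R^{n-k}$ satisfying $F = E \oplus \R(u+v)$. Disintegrating $\sigma$ through the fibration $F\mapsto E$ yields
\[
J(t) = \int_{G_{u^{\perp},k-1}} I_E(t)\,\sigma_{u^{\perp},k-1}(dE),\qquad I_E(t) := \int_{E^{\perp}\cap u^{\perp}} |P_{F(E,v)}K_t|^{-n}\rho(v)\,dv,
\]
for an explicit even Jacobian $\rho$. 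The total space $\{(E,v)\}$ of this fibration, a vector bundle with $(n-k)$-dimensional fibers over $G_{u^{\perp},k-1}$, is what plays the role of the Projection Rolodex. Because Steiner symmetrization in direction $u$ fixes every $E\subset u^{\perp}$, commutes with $P_{E^{\perp}}$, and restricts to Steiner symmetrization along $u$ in the $(n-k+1)$-dimensional subspace $E^{\perp}$, the problem reduces, fiber by fiber, to the fiber-wise midpoint bound
\[
I_E(0) \;\ge\; \tfrac{1}{2}\bigl(I_E(1) + I_E(-1)\bigr) \qquad \text{for every } E\in G_{u^{\perp},k-1}.
\]

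This fiber-wise inequality is the main obstacle: it is a polar-type volume symmetrization for a shadow system in the smaller ambient $E^{\perp}$ of dimension $n-k+1$. I would interpret $v\mapsto 1/|P_{F(E,v)}K_t|$ as the radial function, in coordinates adapted to $u$, of an auxiliary convex body $L_{E,u}(K_t)\subset E^{\perp}$ of generalized polar-projection type, so that $I_E(t)$ becomes a natural weighted volume of $L_{E,u}(K_t)$. Shephard's theorem for shadow systems gives convexity of $t\mapsto |P_{F(E,v)}K_t|$ pointwise in $v$, but promoting this to the required midpoint bound for the polar volume $I_E(t)$ demands a Pr\'ekopa--Leindler/Ball-type integration argument in the spirit of Campi--Gronchi \cite{CampiGronchi-VolumeProductInqs} and Lutwak--Yang--Zhang \cite{LYZ-Lp-PettyProjection}; the ideal target is convexity of $t\mapsto I_E(t)^{-1/m}$ for the appropriate exponent $m=n-k+1$, which together with $I_E(1)=I_E(-1)$ yields $I_E(0)\ge I_E(\pm 1)$. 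The cases $k=1$ and $k=n-1$ recover exactly the Meyer--Pajor/Campi--Gronchi and Lutwak--Yang--Zhang symmetrization results underlying Blaschke--Santal\'o and Petty, so the crux is the unified treatment that handles all intermediate values of $k$ in a single framework.
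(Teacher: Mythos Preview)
Your skeleton is correct and matches the paper's: connect $K$ to $S_u K$ by the linear shadow system $K_u(t)$, disintegrate the Haar measure on $G_{n,k}$ through $E = F\cap u^\perp \in G_{u^\perp,k-1}$, and reduce to a fiber-wise ``polar volume'' inequality over each $E$. This is exactly the Projection Rolodex strategy.

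The gap is at the step you yourself flag as the main obstacle. Pointwise convexity of $t\mapsto |P_{F(E,v)}K_t|$ (Shephard) is \emph{not} an input that any Pr\'ekopa--Leindler or Ball-type argument can upgrade to the desired inequality: those arguments require a \emph{joint} midpoint hypothesis linking different points in the base variable with different values of $t$. The paper supplies precisely this missing joint statement in Proposition~\ref{prop:key}: for each fixed $s\in\R$, the map
\[
(y,t)\ \longmapsto\ |P_{E\wedge(y+su)}K_u(t)|
\]
is jointly convex on $(E^\perp\cap u^\perp)\times\R$ (proved via Lemma~\ref{lem:wedge} by pulling back to the $(n{+}1)$-dimensional $\tilde K$ and using convexity of $x\mapsto |P_{E\wedge x}\tilde K|$). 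Once you have this, the level set $\tilde L_{E,u,s}$ is an origin-symmetric convex body in $(y,t)$, and comparing its $t$-sections at $t=0,\pm1$ via Brunn--Minkowski gives $|L_{E,u,s}(S_uK)|\ge|L_{E,u,s}(K)|$ directly---no harmonic Pr\'ekopa--Leindler is needed for the inequality itself.

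Two related points about your setup. First, your projective chart $F=E\oplus\R(u+v)$ fixes the $u$-component of the direction to $1$; this hides the structure. The paper instead works with the full linear coordinate $x_k=y+su\in E^\perp$, so that $|P_{E\wedge x_k}K|$ is homogeneous and convex on $E^\perp$ (its sublevel set is the convex body $L_E(K)$), and polar integration over $E^\perp$ is what produces the exponent $-n$ naturally (Lemma~\ref{lem:mu}). The decomposition $x_k=y+su$ then singles out $s$ as the variable to \emph{freeze}, which is the step your $v$-chart obscures. Second, your guessed exponent $m=n-k+1$ is off: when one does carry out the Ball-type argument (Section~\ref{sec:convexity}), the convex function of $t$ is $M_k(L_E(K_u(t)))=\bigl(\int |s|^{k-1}|L_{E,u,s}(K_u(t))|\,ds\bigr)^{-1/k}$, i.e.\ exponent $k$, and even that derivation still rests on the joint convexity of Proposition~\ref{prop:key}.
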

 
Throughout the ensuing sections,  $u \in \S^{n-1}$ denotes a fixed vector (the direction in which the Steiner symmetrization is performed), $k=1,\ldots,n-1$ is fixed, $E \in G_{u^{\perp},k-1}$ denotes a $(k-1)$-dimensional linear subspace in $u^{\perp}$, and $x$ denotes a vector in $E^{\perp}$. 

\subsection{Ingredients}

For the proof, we will need three main ingredients. Most of these ingredients also make sense when $k=n$, but yield trivial constructions or statements, and so we omit this extraneous generality here. 

\subsubsection{The Projection Rolodex}

In the initial parts of this subsection, we assume that $E \in G_{n,k-1}$ and $x \in \R^n$, and only later restrict to $E \in G_{u^{\perp},k-1}$ and $x \in E^{\perp}$. Given $K \in \K(\R^n)$, denote:
\begin{equation} \label{eq:PEwedge}
|P_{E \wedge x} K| := |P_{E^{\perp}} x| \L^k(P_{\sspan(E , x)} K) .
\end{equation}
We will mainly consider the case when $x \in E^{\perp}$, so that $|P_{E^{\perp}} x| = |x|$. By $|P_x K|$ we will mean $|x| |P_{\sspan(x)} K|$, corresponding to the case $E = \{0\}$ above. 

\begin{lemma} \label{lem:proj-vol-continuous}
The mapping $G_{n,k-1} \times \R^n \ni (E,x) \mapsto |P_{E \wedge x} K| \in \R_+$ is continuous.
\end{lemma}
\begin{proof}
By (\ref{eq:Fedotov}) applied to $F = \sspan(E , x)$, whenever $x \notin E$:
\[
{n \choose k} V(K, k ; |P_{E^{\perp}} x|^{\frac{1}{n-k}} B_{E^{\perp} \cap x^\perp} , n-k) = \L^k(P_{\sspan(E,x)} K) |P_{E^{\perp}} x| |B_{F^{\perp}}| 
\]
(recall that $B_{F^{\perp}}$ denotes the unit Euclidean ball in the subspace $F^{\perp}$).
It follows that whenever $x \notin E$:
\[
|P_{E \wedge x} K| = \frac{{n \choose k}}{|B_2^{n-k}|} V(K, k ; |P_{E^{\perp}} x|^{\frac{1}{n-k}} B_{E^{\perp} \cap x^\perp} , n-k) ,
\]
and this remains valid also when $x \in E$ since in that case both sides are equal to zero. Clearly $(E,x) \mapsto |P_{E^{\perp}} x|^{\frac{1}{n-k}} B_{E^{\perp} \cap x^\perp}$ is continuous with respect to the Hausdorff topology in a neighborhood of $(E_0,x_0)$ when $x_0 \notin E_0$, but also when $x_0 \in E_0$ thanks to the prefactor $|P_{E^{\perp}} x|^{\frac{1}{n-k}}$ (which tends to zero as $(E,x) \rightarrow (E_0,x_0)$). Consequently, the continuity of mixed volume with respect to the Hausdorff topology concludes the proof. 
\end{proof}

We introduce the following two definitions, which may be of independent interest:
\begin{defn} \label{defn:E-polar-body}
Given $K \in \K(\R^n)$ and $E \in G_{n,k-1}$, the set
\[
L_E(K) := \{ x \in E^{\perp} \; ; \; |P_{E \wedge x} K| \leq 1 \} \subset E^{\perp} 
\]
is called the $E$-projected polar body of $K$. 
\end{defn}

Note that $L_E(K)$ is always origin-symmetric, closed and contains the origin in its interior (as $K$ is compact). Whenever $K$ has non-empty interior then $L_E(K)$ is in addition bounded in $E^{\perp}$ and hence compact. A less obvious property is that $L_E(K)$ is always convex, as we shall momentarily show. We conclude that whenever $K$ is a convex body in $\R^n$, the $E$-projected polar body $L_E(K)$ is an origin-symmetric convex body in $E^{\perp}$.

\begin{lemma} \label{lem:convex}
Let $K \in \K(\R^n)$. For any $E \in G_{n,k-1}$, the map $\R^n \ni x \mapsto |P_{E \wedge x} K|$ is convex. In particular, its level set  $L_E(K)$ in $E^{\perp}$ is convex. 
\end{lemma}
\noindent
This is immediate to see when $E = \{0\}$, in which case $|P_x K| = h_K(x) + h_K(-x)$ and
\[
L(K) := L_{\{0\}}(K) = \{ x \in \R^n \; ; \;  h_K(x) + h_K(-x) \leq 1 \}  = (K-K)^{\circ} .
\]
Hence, when $K$ is an origin-symmetric convex body, $L(K)$ coincides with $\frac{1}{2} K^{\circ}$. Note that contrary to the usual definition of polar body of a convex set, the above definition is invariant under translations of $K$.  

To treat more general subspaces $E$, we introduce some additional useful notation, which will also be used in subsequent sections. Given $w \in P_E K$, denote:
\[
 K^w := (K - w) \cap E^{\perp},
 \]
 and note that if $x \in E^{\perp}$ then:
\[
P_{\sspan(E, x)} K = \bigsqcup_{w \in P_E K} (w + P_{\sspan(x)} K^w) . 
\]
Hence by Fubini and homogeneity, for all $x \in E^{\perp}$: 
\begin{equation} \label{eq:Fubini}
 |P_{E \wedge x} K| = \int_{P_E K} |P_{x} K^w| dw = \int_{P_E K} (h_{K^w}(x) + h_{K^w}(-x)) dw  .
\end{equation}
Note that (\ref{eq:Fubini}) yields a useful expression for $\norm{x}_{L_E(K)}$. 

\begin{proof}[Proof of Lemma \ref{lem:convex}]
Since $|P_{E \wedge x} K|$ only depends on $P_{E^{\perp}} x$, it is enough to establish convexity for $x \in E^{\perp}$. But this is now immediate from (\ref{eq:Fubini}) and  the convexity of the support functions $h_{K^w}$. 
\end{proof}

\medskip

We will henceforth assume that $E \in G_{u^{\perp},k-1}$. 
It will be useful to introduce the following notation given $s \in \R$: 
\begin{equation} \label{eq:LEus-def}
L_{E,u,s}(K)  := \{y  \in E^{\perp} \cap u^{\perp} \; ; \; |P_{E\wedge (y + su)} K| \leq 1\} .
\end{equation}
Note that $L_{E,u,s}(K)$ is the section of $L_E(K)$ perpendicular to $u$ at height $s$, and therefore itself convex. Furthermore, by Brunn's concavity principle, the map $\R \ni s \mapsto |L_{E,u,s}(K)|^{\frac{1}{n-k}}$ is concave on its support, and so in particular $\R \ni s \mapsto |L_{E,u,s}(K)|$ is measurable. 

\medskip

We denote by $V_{k,u}$ the following vector bundle over $G_{u^{\perp} , k-1}$: 
\[
V_{k,u} := \{ (E,x) \; ; \; E \in G_{u^{\perp} , k-1} , x \in E^{\perp} \} ,
\]
equipped with the subspace topology as a closed subset of $G_{u^{\perp},k-1} \times \R^n$. 
Given $K \in \K(\R^n)$, the mapping:
\[
V_{k,u} \ni (E,x) \mapsto |P_{E \wedge x} K| \in \R_+ 
\]
is continuous as the restriction of the continuous mapping from Lemma \ref{lem:proj-vol-continuous}, and hence its sub-level set
\[
\{ (E, x) \in V_{k,u} \; ;\; |P_{E \wedge x} K| \leq 1\} = \{ (E , x) \; ; \; E \in G_{u^{\perp} , k-1} , x \in L_E(K) \}
\]
is a closed subset of $V_{k,u}$. When in addition $K$ has non-empty interior, this subset is bounded and hence compact. 

\begin{defn}
Given $K \in \K(\R^n)$, the closed subset 
\[
L_{k,u}(K) := \{ (E , x) \; ; \; E \in G_{u^{\perp} , k-1} , x \in L_E(K) \} \subset V_{k,u}
\]
is called the $k$-dimensional Projection Rolodex of $K$ relative to $u^{\perp}$. 
\end{defn}

The idea behind this definition is that it encodes the values of $|P_F K|$ for $\sigma_{n,k}$-almost-every $F \in G_{n,k}$; indeed, we may write almost-every $F \in G_{n,k}$ as the direct sum of $E = F \cap u^{\perp} \in G_{u^{\perp},k-1}$ and $\sspan(\theta)$ for $\theta \in \S(E^{\perp})$, and use that $t \theta \in L_E(K)$ iff $|t| \leq 1/ |P_F K|$, i.e.~that:
\begin{equation} \label{eq:rolodex-prop}
 |P_F K| = |P_{\sspan(E,\theta)} K| = \norm{\theta}_{L_E(K)} \;\;\; \forall \theta \in \S(E^{\perp}). 
\end{equation}

\subsubsection{Convexity of shadow system's projections} 

Our second main ingredient is the following key proposition, which pertains to a certain convexity property of projections of shadow systems.  

\begin{proposition} \label{prop:key}
Let $\{K(t) \}_{t \in \R}$ denote a shadow system in the direction of $u \in \S^{n-1}$, and let 
 $E \in G_{u^{\perp} , k-1}$. Then for any fixed $s \in \R$, the function:
\[
u^{\perp} \times \R \ni (y,t) \mapsto|P_{E \wedge (y + su)} K(t)| 
\]
is jointly convex in $(y,t)$. 
\end{proposition}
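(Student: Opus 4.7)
My plan is to represent $|P_{E\wedge(y+su)} K(t)|$ as the $k$-dimensional Lebesgue volume of a linear image of the generating convex body $\tilde K\subset\R^{n+1}$ under a map $M_{y,t}\colon\R^{n+1}\to\R^k$ that depends affinely on $(y,t)$. Joint convexity of this volume is then equivalent to convexity along every affine line in the $(y,t)$-plane, and along any such line the image will turn out to form a standard $1$-parameter shadow system in $\R^k$, whose volume is convex in the parameter by the classical theorem of Rogers--Shephard \cite{RogersShephard-ShadowSystems,Shephard-ShadowSystems}.

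The first step is to reduce to the case $y\in E^\perp\cap u^\perp$. Since $E\subset u^\perp$ we have $P_E u=0$, and since $y\perp u$,
\[
P_{E^\perp}(y+su)\;=\;P_{E^\perp\cap u^\perp}\, y + s u,
\]
so both $|P_{E^\perp}(y+su)|$ and $\sspan(E,\,y+su)=\sspan(E,\,P_{E^\perp}(y+su))$ depend on $y$ only through $\bar y:=P_{E^\perp\cap u^\perp}\, y$; joint convexity in $(\bar y,t)$ therefore lifts to joint convexity in $(y,t)$ by pre-composition with the linear map $y\mapsto\bar y$. Next, setting $w:=\bar y + s u\in E^\perp$ and defining $L_w\colon\R^n\to E\oplus\R\cong\R^k$ by $L_w(x):=(P_E x,\scalar{x,w})$, a direct computation in the orthonormal basis $\{e_1,\dots,e_{k-1},w/|w|\}$ of $\sspan(E,w)$ exhibits $L_w$ as $P_{\sspan(E,w)}$ followed by dilation of the last coordinate by $|w|$, so that
\[
\L^k(L_w K)\;=\;|w|\cdot\L^k\!\bigl(P_{\sspan(E,w)}K\bigr)\;=\;|P_{E\wedge w}K|.
\]

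Plugging in $T_t(z,z_{n+1})=z-tz_{n+1}u$ and using $\scalar{\bar y,u}=0$, $|u|=1$, $P_Eu=0$, the composition $M_{\bar y,t}:=L_{w(\bar y)}\circ T_t\colon\R^{n+1}\to\R^k$ becomes
\[
M_{\bar y,t}(z,z_{n+1})\;=\;\bigl(\,P_E z,\;\scalar{z,\bar y}+s\scalar{z,u}-st\,z_{n+1}\,\bigr),
\]
which is bilinear in $\bigl((z,z_{n+1}),(\bar y,t)\bigr)$ and realizes $|P_{E\wedge(y+su)}K(t)|=\L^k(M_{\bar y,t}(\tilde K))$. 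Along any affine line $(\bar y(r),t(r))=(\bar y_0+r\bar y_1,\,t_0+rt_1)$ only the last coordinate of $M_{\bar y(r),t(r)}$ varies with $r$, and it varies affinely, so defining the linear map $\Psi\colon\R^{n+1}\to\R^{k+1}$ by
\[
\Psi(z,z_{n+1}):=\bigl(M_{\bar y_0,t_0}(z,z_{n+1}),\;\scalar{z,\bar y_1}-st_1z_{n+1}\bigr),
\]
and $\tilde K':=\Psi(\tilde K)$, we obtain $M_{\bar y(r),t(r)}(\tilde K)=\psi_r(\tilde K')$, where $\psi_r\colon\R^{k+1}\to\R^k$, $(a_1,\dots,a_k,a_{k+1})\mapsto(a_1,\dots,a_{k-1},\,a_k+ra_{k+1})$, is the projection parallel to $e_{k+1}-re_k$. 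Hence $\{\psi_r(\tilde K')\}_{r\in\R}$ is a $1$-parameter shadow system in $\R^k$; Rogers--Shephard gives convexity of $r\mapsto\L^k(\psi_r(\tilde K'))$, and joint convexity in $(\bar y,t)$ (hence in $(y,t)$) follows.

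The main obstacle I anticipate is the bookkeeping in the linear-image identity $\L^k(L_wK)=|P_{E\wedge w}K|$, and the verification that---thanks to $P_Eu=0$ and $\scalar{\bar y,u}=0$, which is where the hypothesis $E\subset u^\perp$ is crucially used---the $E$-component of $M_{\bar y,t}$ is entirely independent of $(\bar y,t)$. This confinement of the parameter dependence to a single coordinate is what makes the reduction along each line yield a genuine $1$-parameter shadow system; once this is in place, the appeal to the classical Rogers--Shephard convexity is immediate.
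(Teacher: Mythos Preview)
Your proof is correct. Both your argument and the paper's share the same opening move: pushing the computation back to the generating body $\tilde K \subset \R^{n+1}$ via $K(t) = T_t(\tilde K)$, and observing that the resulting expression depends affinely on $(y,t)$. The difference lies in how this is packaged and in the final convexity ingredient.

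The paper works on the ``dual'' side: it establishes a general identity $|P_{E\wedge x}\,T(A)| = |P_{E\wedge T^*(x)}\,A|$ (Lemma~\ref{lem:wedge}), applies it with $T = T_t$ to obtain $|P_{E\wedge(y+su)}K(t)| = |P_{E\wedge(y+su-ste_{n+1})}\tilde K|$, and then invokes the convexity of $x \mapsto |P_{E\wedge x}\tilde K|$ (Lemma~\ref{lem:convex}), proved directly from the Fubini representation~(\ref{eq:Fubini}) as an integral of support functions. You work on the ``primal'' side: you build the explicit linear map $M_{\bar y,t}:\R^{n+1}\to\R^k$, observe that only its last coordinate depends on the parameters, and along each affine line recognize a one-parameter shadow system in $\R^k$, finishing with the Rogers--Shephard/Shephard theorem on convexity of volume. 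The paper's route is fully self-contained (both lemmas are elementary and proved from scratch), while yours outsources the final convexity to a classical black box; on the other hand, your direct computation of $M_{\bar y,t}$ makes the role of the hypothesis $E \subset u^\perp$ --- which kills the $P_E u$ and $\scalar{u,\bar y}$ terms and confines all parameter dependence to a single coordinate --- especially transparent.

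One small slip: $M_{\bar y,t}(z,z_{n+1})$ is not bilinear in $\bigl((z,z_{n+1}),(\bar y,t)\bigr)$, since the term $s\scalar{z,u}$ makes it merely affine in $(\bar y,t)$. But your subsequent argument only uses affinity along lines, so this is harmless.
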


While we will only require this for the linear reflection shadow system $\{K_u(t) \}_{t \in [-1,1]}$, the proof extends to general shadow systems with no additional effort. The proof of Proposition \ref{prop:key} is deferred to Subsection \ref{subsec:key1}; an alternative proof is presented in Section \ref{sec:convexity}.

\subsubsection{Blaschke--Petkantschin-type formula} 

The final crucial ingredient, without which we do not know how to obtain sharp lower bounds on $\Q_{k,p}(K)$ for $p < -(k+1)$ (see Remark \ref{rem:Petty-stuck}), is the following Blaschke--Petkantschin-type formula. 

\begin{thm} \label{thm:BP}
Fix $u \in \S^{n-1}$. There exists a constant $c_{n,k} > 0$ so that for any measurable function $f : G_{n,k} \rightarrow \R_+$:
\[
c_{n,k} \int_{G_{n,k}} f(F) \sigma_{n,k}(dF) =  \int_{G_{u^{\perp} , k-1}} \int_{\S^{n-k}(E^{\perp})} f(\sspan(E,\theta)) \abs{\scalar{\theta,u}}^{k-1}  d\theta \, \sigma_{u^{\perp},k-1}(dE) ,
\]
where $\sigma_{u^{\perp},k-1}$ is the uniform Haar probability measure on $G_{u^{\perp} , k-1}$. 
\end{thm}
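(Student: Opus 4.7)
The plan is to view both sides as $\sigma$-finite measures on $G_{n,k}$ and show they are proportional via an invariance argument, then pin down the constant by testing against $f \equiv 1$. The left side is manifestly $\SO(n)$-invariant. The right side is the pushforward, under the map $\Phi(E, \theta_k) := \sspan(E, \theta_k)$, of the measure $|\scalar{\theta_k, u}|^{k-1} d\theta_k \, \sigma_{u^\perp, k-1}(dE)$, which is manifestly only $\SO(u^\perp)$-invariant. The content of the theorem is that the weight $|\scalar{\theta_k, u}|^{k-1}$ is exactly the Jacobian that upgrades this to full $\SO(n)$-invariance.

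First I would check that $\Phi$ is well-defined and generically $2$-to-$1$: for $\sigma_{n,k}$-a.e.\ $F \in G_{n,k}$ (those not containing $u$), the subspace $E := F \cap u^\perp$ is $(k-1)$-dimensional and $F \cap E^\perp$ is a line, whose two unit vectors are the preimages $\theta_k$. The parameter-space dimension matches: $(k-1)(n-k) + (n-k) = k(n-k) = \dim G_{n,k}$. Since $G_{n,k}/\SO(u^\perp)$ is one-dimensional, parametrized by the angle $\alpha \in [0,\pi/2]$ with $\cos\alpha = |\scalar{\theta_k, u}|$, both $\SO(u^\perp)$-invariant measures on $G_{n,k}$ are determined by their pushforwards to the $\alpha$-axis. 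Decomposing $\theta_k = \cos\alpha \cdot u + \sin\alpha \cdot \omega$ with $\omega$ in the $(n-k-1)$-sphere of $E^\perp \cap u^\perp$ gives
\[
d\theta_k = \sin^{n-k-1}\alpha \, d\alpha \, d\omega,
\]
so after integrating out $\omega$ and $E$ the right-side density in $\alpha$ is proportional to $\cos^{k-1}\alpha \sin^{n-k-1}\alpha$. This must be matched against the density of the angle between a uniformly random $F \in G_{n,k}$ and the fixed line $\sspan(u)$, which by the classical fact that $|P_F u|^2$ has a $\mathrm{Beta}(k/2,(n-k)/2)$ distribution is also proportional to $\cos^{k-1}\alpha \sin^{n-k-1}\alpha$.

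The main obstacle is this last density match, which requires either the Beta identity above or a direct Jacobian computation in local Grassmannian coordinates. Rather than grinding it out, I would cite Schneider--Weil Theorem 7.2.6 with $q=1$, $m=s_1=k$, $s_0=n-1$ (taking the distinguished hyperplane to be $u^\perp$), which packages precisely this disintegration in full generality; the factor $|\scalar{\theta_k,u}|^{k-1}$ is the $(k-1)$-th power of the sine of the flag angle appearing there. Once proportionality is established, the constant $c_{n,k}$ is recovered by plugging in $f\equiv 1$ on both sides and using standard normalizations of Haar measure on Grassmannians and surface area on spheres.
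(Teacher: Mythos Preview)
Your proposal is correct and lands on exactly the same justification as the paper: the authors do not prove Theorem~\ref{thm:BP} independently but simply cite it as a particular case of Schneider--Weil \cite[Theorem 7.2.6]{SchneiderWeil-Book} with the same parameter specialization $q=1$, $m=s_1=k$, $s_0=n-1$. Your added heuristic via $\SO(u^\perp)$-invariance and the Beta$(k/2,(n-k)/2)$ law for $|P_F u|^2$ is correct and is a nice way to see why the weight $|\scalar{\theta_k,u}|^{k-1}$ is forced, but since you ultimately defer to the same reference, the approaches coincide.
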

\begin{proof}
The statement is a particular case of \cite[Theorem 7.2.6]{SchneiderWeil-Book} from the excellent monograph of Schneider--Weil, applied with $d=n$, $q=1$, $m = s_1 = k$, $s_0 = d-1$ and $L_0 = u^{\perp}$. To be a bit more precise, in \cite[Theorem 7.2.6]{SchneiderWeil-Book}, the inner integral on the right hand side above is given by:
\[
\int_{G(E,k)} f(F) [F,u^{\perp}]^{k-1} \sigma_{E,k}(dF) ,
\]
where $G(E,k)$ denotes the homogeneous space $\{ F \in G_{n,k} \; ; \; F \supset E \}$ equipped with its uniform Haar probability measure $\sigma_{E,k}$ (invariant under the action of $SO(n,E) := \{ U \in SO(n) \; ; \; U E = E \}$), and $[F,u^{\perp}]$ is the subspace determinant defined in \cite[Section 14.1]{SchneiderWeil-Book}. Any $F \in G(E,k)$ may be written as $F = \sspan(E,\theta)$ for some $\theta \in \S^{n-k}(E^{\perp})$, and since the Haar measure $d \theta$ on $\S^{n-k}(E^{\perp})$ is invariant under the action of $SO(n,E)$, uniqueness of the Haar measure (up to a multiplicative constant) implies that we may rewrite this integral as:
\begin{equation} \label{eq:extra-factor}
\frac{1}{|\S^{n-k}|} \int_{\S^{n-k}(E^{\perp})} f(\sspan(E,\theta)) [\sspan(E,\theta), u^{\perp}]^{k-1} d\theta . 
\end{equation}
By definition, $ [\sspan(E,\theta), u^{\perp}] = [\sspan(E,\theta)^{\perp} , \sspan(u)]$, where $[F_1,F_2]$ denotes, when $b = \dim F_1 + \dim F_2 \leq n$, the $b$-dimensional volume of the parallelepiped spanned by the union of any orthonormal bases of $F_1$ and $F_2$. It remains to note that since $E \subset u^{\perp}$ and $\theta \in E^{\perp}$:
\[
[\sspan(E,\theta)^{\perp} , \sspan(u)] = [\sspan(E,\theta)^{\perp} \oplus E , \sspan(u)] =  [\sspan(\theta)^{\perp},\sspan(u)] = \abs{\scalar{\theta,u}} .
 \]
 The value of $c_{n,k}$ is immaterial for us, but may be found in \cite[Theorem 7.2.6]{SchneiderWeil-Book}, after taking into account the extra $|\S^{n-k}| = \H^{n-k}(\S^{n-k})$ term we incurred in (\ref{eq:extra-factor}). 
 \end{proof}

\subsection{Proof of Theorem \ref{thm:Steiner}}

Given $u \in \S^{n-1}$, introduce the following Borel measure on $V_{k,u}$
\[
 \mu_{k,u}(dE,dx) := \abs{\scalar{x,u}}^{k-1} \L_{E^{\perp}}(dx) \sigma_{u^{\perp} , k-1}(dE) .
\]
To be slightly more precise, $\mu_{k,u}$ is obtained as the restriction of the Borel product measure $\sigma_{u^{\perp} , k-1}(dE) \otimes (\abs{\scalar{x,u}}^{k-1} \H^{n-k+1}(dx))$ on $G_{u^{\perp},k-1} \times \R^n$ to the closed subset $V_{k,u}$, and thus defines a measure on the Borel $\sigma$-algebra on $V_{k,u}$. 
Recall that $L_{k,u}(K)$ is a closed subset of $V_{k,u}$ and therefore Borel measurable.

\begin{lemma} \label{lem:mu}
For any  $K \in \K(\R^n)$ and $u \in \S^{n-1}$:
\begin{equation} \label{eq:mu}
\mu_{k,u}(L_{k,u}(K)) = \frac{c_{n,k}}{n} \int_{G_{n,k}} \frac{1}{|P_F K|^n} \sigma_{n,k}(dF) . 
\end{equation}
\end{lemma}
\begin{proof}
Set $p(x) := \abs{\scalar{x,u}}^{k-1}$. Integrating in polar coordinates on $E^{\perp}$ and  invoking Theorem \ref{thm:BP}, we obtain:
\begin{align*}
\mu_{k,u}(L_{k,u}(K)) & = \int_{G_{u^{\perp} , k-1}}  \int_{E^{\perp}} 1_{L_{k,u}(K)}(E,x) p(x) \L_{E^{\perp}}(d x) \sigma_{u^{\perp} , k-1}(dE) \\
& = \int_{G_{u^{\perp} , k-1}}  \int_{\S^{n-k}(E^{\perp})} \int_0^\infty 1_{L_{k,u}(K)}(E,r \theta) p(r \theta) r^{n-k} dr \, d\theta \, \sigma_{u^{\perp} , k-1}(dE) \\
& = \int_{G_{u^{\perp} , k-1}}  \int_{\S^{n-k}(E^{\perp})} p(\theta) \int_0^{1 / |P_{\sspan(E,\theta)}(K)|}  r^{n-1} dr \, d\theta \,\sigma_{u^{\perp} , k-1}(dE) \\
& = \frac{1}{n} \int_{G_{u^{\perp} , k-1}}  \int_{\S^{n-k}(E^{\perp})} \frac{1}{|P_{\sspan(E,\theta)} K|^n} \abs{\scalar{\theta,u}}^{k-1} d\theta \, \sigma_{u^{\perp} , k-1}(dE) \\
& = \frac{c_{n,k}}{n} \int_{G_{n,k}} \frac{1}{|P_F K|^n} \sigma_{n,k}(dF) .
\end{align*}
\end{proof}
\begin{rem} \label{rem:finite}
While the identity (\ref{eq:mu}) is valid for an arbitrary $K \in \K(\R^n)$ by the Fubini-Tonelli theorem, note that when $K$ has empty interior, the expressions in (\ref{eq:mu}) may (both) be infinite. However,  both expressions will be finite whenever $K$ has non-empty interior, i.e. when $K$ is a convex body.  
\end{rem}

\begin{proof}[Proof of Theorem \ref{thm:Steiner}]
In view of Lemma \ref{lem:mu}, we would like to show that for any convex body $K$:
\begin{equation} \label{eq:mu-goal}
\mu_{k,u}(L_{k,u}(K)) \leq \mu_{k,u}(L_{k,u}(S_u K)) . 
\end{equation}
The advantage of the latter formulation is that now everything is ``aligned" with $u$, the direction in which we perform the Steiner symmetrization. Consequently, we evaluate things by decomposing each $E^{\perp}$ into $\sspan(u) \oplus (E^{\perp} \cap u^{\perp})$ and applying Fubini:
\begin{align}
\nonumber \mu_{k,u}(L_{k,u}(K)) & = \int_{G_{u^{\perp} , k-1}}  \int_{E^{\perp}} 1_{L_{k,u}(K)}(E,x) \abs{\scalar{x,u}}^{k-1} \L_{E^{\perp}}(d x) \sigma_{u^{\perp} , k-1}(dE) \\
\nonumber & = \int_{G_{u^{\perp} , k-1}}  \int_\R \int_{E^\perp \cap u^\perp} 1_{|P_{E \wedge (y + su)} K| \leq 1} \abs{\scalar{y + s u,u}}^{k-1}  dy \, ds \, \sigma_{u^{\perp},k-1}(dE) \\
\nonumber  & = \int_{G_{u^{\perp} , k-1}} \int_\R |s|^{k-1} \int_{E^\perp \cap u^\perp} 1_{|P_{E \wedge (y + su)} K| \leq 1}  dy \, ds \, \sigma_{u^{\perp},k-1}(dE) \\
\label{eq:punch} & = \int_{G_{u^{\perp} , k-1}} \int_\R |s|^{k-1} |L_{E,u,s}(K)| \, ds \, \sigma_{u^{\perp},k-1}(dE) ,
\end{align}
where $L_{E,u,s}(K)$ was defined in (\ref{eq:LEus-def}), and shown there to be convex (and hence measurable), and $\R \ni s \mapsto |L_{E,u,s}(K)|$ was shown to be measurable as well. Note that the Borel measurability of the inner integral in $E \in G_{u^{\perp} , k-1}$ follows directly from the Fubini-Tonelli theorem, applied to the iterated integral of the Borel function $1_{L_{k,u}(K)}$ with respect to the Borel product measure $\sigma_{u^{\perp} , k-1}(dE) \otimes (\abs{\scalar{x,u}}^{k-1} \H^{n-k+1}(dx))$.

\smallskip

So far we haven't used the convexity of $K$ in any essential way. We now apply the key Proposition \ref{prop:key} to the linear reflection shadow system $\{K_u(t)\}$ from Lemma \ref{lem:reflection-shadow}. 
As $E \subset u^{\perp}$ and $y\in E^{\perp} \cap u^{\perp}$, it follows that for every fixed $s \in \R$, the function:
\[
 (E^{\perp} \cap u^{\perp}) \times \R \ni (y,t) \mapsto  f^{(s)}(y,t) := |P_{E \wedge (y + su)} K_u(t)| \;\;\; \text{is jointly convex.}
\]
In addition, $f^{(s)}(y,t)$ is an even function, since $K_u(-t) = R_u(K_u(t))$ and hence:
\begin{align}
\label{eq:flip} f^{(s)}(-y,-t)&  = |P_{E \wedge (-y + su)} K_u(-t)| = |P_{R_u E \wedge R_u(-y + su)}  K_u(t)| \\
\nonumber & = |P_{E \wedge (-y - su)} K_u(t)| = |P_{E \wedge (y + su)} K_u(t)| = f^{(s)}(y,t) . 
\end{align}
Consequently, its level set:
\[
 \tilde L_{E,u,s} := \{ (y,t) \in  (E^{\perp} \cap u^{\perp}) \times \R   \; ; \; |P_{E \wedge (y + su)} K_u(t)| \leq 1 \}
\]
is an origin-symmetric convex body. Note that its $t$-section is precisely $L_{E,u,s}(K_u(t))$. Inspecting the $t$-sections at $t=-1,0,1$ and recalling that $K_u(1) =  K$ and $K_u(0) = S_u K$, convexity and origin-symmetry of $\tilde L_{E,u,s}$ imply:
\begin{equation} \label{eq:key-inq}
L_{E,u,s}(S_u K) \supseteq \frac{1}{2}(L_{E,u,s}(K) - L_{E,u,s}(K)) . 
\end{equation}
By the Brunn--Minkowski inequality, we deduce:
\[
|L_{E,u,s}(S_u K)| \geq |L_{E,u,s}(K)| 
\]
(which remains valid also when $L_{E,u,s}(K) = \emptyset$). 
Plugging this back into (\ref{eq:punch}) and rolling everything back, we deduce the desired (\ref{eq:mu-goal}), thereby concluding the proof. 
\end{proof}

In fact, the above proof gives us more information: \begin{thm} \label{thm:monotone} 
For any convex body $K$ in $\R^n$ and $u \in \S^{n-1}$, the function $\R_+ \ni t \mapsto \Phi_k(K_u(t)) = \Phi_k(K_u(-t))$ is monotone non-decreasing. 
\end{thm}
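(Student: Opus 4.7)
The evenness $\Phi_k(K_u(-t)) = \Phi_k(K_u(t))$ is immediate from the affine invariance of $\Phi_k$ combined with the identity $R_u K_u(t) = K_u(-t)$ recorded in Subsection \ref{subsubsec:shadow}. Hence it suffices to show that $t \mapsto \Phi_k(K_u(t))$ is non-decreasing on $[0,\infty)$; equivalently, via Lemma \ref{lem:mu}, that $t \mapsto \mu_u(L_{k,u}(K_u(t)))$ is non-increasing on that half-line.

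My plan is to reuse the disintegration (\ref{eq:punch}) and establish the pointwise statement that, for each fixed $E \in G_{u^{\perp},k-1}$ and $s \in \R$, the map $t \mapsto |L_{E,u,s}(K_u(t))|$ is non-increasing on $[0,\infty)$. The key observation, which is already at the heart of the proof of Theorem \ref{thm:Steiner}, is that by Proposition \ref{prop:key} applied to the shadow system $\{K_u(t)\}_{t\in\R}$, the function
\[
f^{(s)}(y,t) := |P_{E\wedge(y+su)}K_u(t)|
\]
is jointly convex on $(E^{\perp} \cap u^{\perp}) \times \R$, and the symmetry $R_u K_u(t) = K_u(-t)$ together with $R_u E = E$ (since $E \subset u^{\perp}$) forces $f^{(s)}$ to be even in $(y,t)$, exactly as computed in the proof of Theorem \ref{thm:Steiner}. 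Consequently its unit sublevel set
\[
\tilde L_{E,u,s} := \{(y,t) \in (E^{\perp} \cap u^{\perp}) \times \R \; ; \; f^{(s)}(y,t) \leq 1\}
\]
is an origin-symmetric convex subset of $\R^{n-k+1}$, whose $t$-sections coincide with $L_{E,u,s}(K_u(t))$ and are bounded (since $|y+su| \to \infty$ forces $f^{(s)}(y,t) \to \infty$), hence have finite $(n-k)$-dimensional volume.

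Brunn's concavity principle then yields that $t \mapsto |L_{E,u,s}(K_u(t))|^{1/(n-k)}$ is concave on its support, and being even, it is automatically non-increasing on $[0,\infty)$; hence so is $|L_{E,u,s}(K_u(t))|$ itself. Integrating this pointwise monotonicity against $|s|^{k-1}\,ds\,\sigma_{u^{\perp},k-1}(dE)$ as in (\ref{eq:punch}) delivers the desired monotonicity of $\mu_u(L_{k,u}(K_u(\cdot)))$. The only mild subtlety is that $\tilde L_{E,u,s}$ may be unbounded in the $t$-direction -- for instance, it degenerates to a slab when $s=0$, because the $u^{\perp}$-projection of $K_u(t)$ does not depend on $t$ and so the $t$-sections are identical -- but in such degenerate cases the slice function is simply constant in $t$, which is trivially non-increasing, so the conclusion is unaffected. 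I expect no obstacle beyond bookkeeping these details.
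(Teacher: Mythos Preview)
Your proposal is correct and follows essentially the same argument as the paper: Brunn's concavity applied to the $t$-sections of the origin-symmetric convex set $\tilde L_{E,u,s}$ gives the pointwise monotonicity of $t \mapsto |L_{E,u,s}(K_u(t))|$ on $\R_+$, which is then integrated via (\ref{eq:punch}) and Lemma \ref{lem:mu}. Your remarks about the possible unboundedness of $\tilde L_{E,u,s}$ in the $t$-direction are a welcome clarification but do not change the argument.
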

\begin{proof}
As $\dim (E^{\perp} \cap u^{\perp}) = n-k$, we actually know by Brunn's concavity principle, applied to the $t$-sections of $\tilde L_{E,u,s}$, that the function $\R \ni t \mapsto |L_{E,u,s}(K_u(t))|^{\frac{1}{n-k}}$ is concave on its support. It is also even by origin-symmetry of  $\tilde L_{E,u,s}$. In particular,
\begin{equation} \label{eq:VolLmonotone}
\R_+ \ni t \mapsto |L_{E,u,s}(K_u(t))| = |L_{E,u,s}(K_u(-t))| \text{ is non-increasing} .
\end{equation}
Integrating this according to (\ref{eq:punch}) and applying Lemma \ref{lem:mu}, the assertion follows. 

\end{proof}

\subsection{Proof of convexity of shadow system projections} \label{subsec:key1}

To complete the proof, it remains to establish Proposition \ref{prop:key}. 

\smallskip

Denote:
\[
|P_{x_1\wedge\ldots\wedge x_k} K| := \L^k(P_{\sspan\{x_1,\ldots,x_k\}} K) \Delta(x_1,\ldots,x_k) , 
\]
where $\Delta(x_1,\ldots,x_k)$ denotes the $\L^k$ measure of the parallelepiped $[0,x_1] + \ldots + [0,x_k]$. 
This is consistent with our previous notation introduced in (\ref{eq:PEwedge}) since if $E \in G_{n,k-1}$ is spanned by an orthonormal basis $\{x_1,\ldots,x_{k-1}\}$, we clearly have:
\[
|P_{E \wedge x_k} K| = |P_{x_1 \wedge \ldots \wedge x_{k-1} \wedge x_k} K| .
\]

\medskip

We will require the following linear-algebra lemma:
\begin{lemma} \label{lem:wedge}
For any linear map $T : \R^n \rightarrow \R^n$, $x_1,\ldots,x_k \in \R^n$ and compact set $A\subset \R^n$:
\begin{equation} \label{eq:wedge}
|P_{x_1\wedge\ldots\wedge x_k} T(A)| = |P_{T^*(x_1)\wedge \ldots \wedge T^*(x_k)} A| .
\end{equation}
\end{lemma}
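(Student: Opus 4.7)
\textbf{Proof plan for Lemma \ref{lem:wedge}.} The plan is to reinterpret the quantity $|P_{x_1\wedge\ldots\wedge x_k} K|$ as the $k$-dimensional Lebesgue measure of a certain linear image of $K$, after which the identity becomes a one-line consequence of adjointness. Concretely, to any $k$-tuple $(x_1,\ldots,x_k)$ I associate the linear map $\Phi=\Phi_{x_1,\ldots,x_k}:\R^k\to\R^n$ sending $e_i$ to $x_i$; its adjoint is $\Phi^*:\R^n\to\R^k$, $y\mapsto(\scalar{x_1,y},\ldots,\scalar{x_k,y})$. The key claim, which I verify first, is the identity
\[
|P_{x_1\wedge\ldots\wedge x_k} K| \;=\; \L^k\bigl(\Phi^*_{x_1,\ldots,x_k}(K)\bigr). \qquad (\ast)
\]

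\textbf{Step 1: Establish $(\ast)$.} If $x_1,\ldots,x_k$ are linearly dependent then $\Delta(x_1,\ldots,x_k)=0$ and $\Phi^*$ has rank $<k$, so both sides vanish. Otherwise let $F=\sspan(x_1,\ldots,x_k)$. Since $\Phi^*$ annihilates $F^{\perp}$, we have $\Phi^*(K)=(\Phi^*|_F)(P_F K)$, so $\L^k(\Phi^*(K))=|\det(\Phi^*|_F)|\,\L^k(P_F K)$. Pick any orthonormal basis $u_1,\ldots,u_k$ of $F$ and let $M_{ij}=\scalar{x_i,u_j}$ represent $\Phi^*|_F$; then $(MM^T)_{ij}=\scalar{x_i,x_j}$ because $u_1,\ldots,u_k$ span $F\ni x_i,x_j$, so $\det(MM^T)$ is the Gram determinant, which equals $\Delta(x_1,\ldots,x_k)^2$. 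Thus $|\det(\Phi^*|_F)|=\Delta(x_1,\ldots,x_k)$, and $(\ast)$ follows from the definition of $|P_{x_1\wedge\ldots\wedge x_k} K|$.

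\textbf{Step 2: Deduce (\ref{eq:wedge}) via adjointness.} Apply $(\ast)$ to $T(A)$ in place of $K$:
\[
|P_{x_1\wedge\ldots\wedge x_k} T(A)| \;=\; \L^k\bigl(\Phi^*_{x_1,\ldots,x_k}(T(A))\bigr) \;=\; \L^k\bigl((\Phi^*_{x_1,\ldots,x_k}\circ T)(A)\bigr).
\]
A direct computation on any $y\in\R^n$ gives $[\Phi^*_{x_1,\ldots,x_k}(Ty)]_i=\scalar{x_i,Ty}=\scalar{T^*x_i,y}=[\Phi^*_{T^*x_1,\ldots,T^*x_k}(y)]_i$, hence $\Phi^*_{x_1,\ldots,x_k}\circ T=\Phi^*_{T^*x_1,\ldots,T^*x_k}$ as maps $\R^n\to\R^k$. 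Applying $(\ast)$ a second time, now to the tuple $(T^*x_1,\ldots,T^*x_k)$ and the set $A$, yields $\L^k(\Phi^*_{T^*x_1,\ldots,T^*x_k}(A))=|P_{T^*x_1\wedge\ldots\wedge T^*x_k} A|$, which completes the proof.

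\textbf{Remarks on difficulty.} There is no real obstacle: everything is elementary linear algebra, and the only mild subtlety is the (harmless) degenerate case of dependent $x_i$'s, handled at the start of Step 1. One could alternatively prove (\ref{eq:wedge}) by reducing to elementary transformations $T$ (rotations, diagonal rescalings, shears) and checking each case by hand, but the adjoint reformulation $(\ast)$ is cleaner and bypasses any case analysis.
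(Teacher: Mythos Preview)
Your proof is correct and takes a genuinely different (and arguably cleaner) route than the paper's. The paper works directly with the orthogonal projection $P_E$ onto $E=\sspan(x_1,\ldots,x_k)$: it first establishes an auxiliary factorization lemma (Lemma~\ref{lem:prepare-wedge}) showing that $P_E\circ T = S\circ P_{T^*E}$ for some linear $S:T^*E\to E$ whenever $T^*|_E$ is injective, then computes $|\det S|=|\det_{E\to T^*E} T^*|_E|$ and combines this with the identity $\Delta(x_1,\ldots,x_k)\,|\det_{E\to T^*E} T^*|_E|=\Delta(T^*x_1,\ldots,T^*x_k)$. Your key observation $(\ast)$, namely $|P_{x_1\wedge\ldots\wedge x_k}K|=\L^k(\Phi^*_{x_1,\ldots,x_k}(K))$, absorbs the volume factor $\Delta$ directly into the map $\Phi^*$, after which the lemma collapses to the one-line adjoint identity $\Phi^*_{x_1,\ldots,x_k}\circ T=\Phi^*_{T^*x_1,\ldots,T^*x_k}$. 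This bypasses the auxiliary lemma and the separate treatment of the Jacobian of $S$, at the modest cost of verifying $(\ast)$ via a Gram determinant computation. Both arguments handle the degenerate case (dependent $x_i$'s, equivalently $T^*|_E$ non-injective) by observing that both sides vanish.
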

\noindent
We will actually only require to know that $|P_{E \wedge x} T(K)| = |P_{E \wedge T^*(x)} K|$ when $T$ acts as the identity on $E$ and invariantly on $E^{\perp}$, which is totally elementary and may be proved as in Lemma \ref{lem:convex} by using that $h_{T(K^w)}(x) = h_{K^w}(T^* x)$; however, for completeness, we provide a proof of the general version above. 
First observe:
\begin{lemma} \label{lem:prepare-wedge}
For any linear map $T: \R^n \rightarrow \R^n$ and subspace $E \subseteq \R^n$ so that $T^*|_E : E \rightarrow T^* E$ is injective, there exists a linear map $S: T^* E \rightarrow E$ so that:
\begin{equation} \label{eq:S}
P_E \circ T = S  \circ P_{T^* E} . 
\end{equation}
\end{lemma}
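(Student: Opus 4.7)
The plan is straightforward linear algebra: I would construct $S$ as the unique map induced by the factorization of $P_E \circ T$ through $P_{T^* E}$, after checking the kernel containment $\Ker(P_{T^* E}) \subseteq \Ker(P_E \circ T)$.

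First I would observe that $\Ker(P_{T^* E}) = (T^* E)^{\perp}$, so it suffices to verify that $P_E(T x) = 0$ whenever $x \in (T^* E)^{\perp}$. This is the one calculation required, and it uses the defining adjoint identity in one line: for any $v \in E$,
\[
\scalar{T x, v} = \scalar{x, T^* v} = 0,
\]
since $T^* v \in T^* E$. Hence $T x \in E^{\perp}$, so $P_E(T x) = 0$, as desired.

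Given this, I would define $S : T^* E \to E$ by $S(y) := P_E(T x)$ for any $x \in \R^n$ with $P_{T^* E}(x) = y$; the kernel containment above makes this independent of the chosen representative $x$, and linearity of $S$ is inherited from linearity of $P_E \circ T$. Equivalently, one may write $S := (P_E \circ T)|_{T^*E}$, since $T^* E \subset \R^n$ is a complement to $\Ker(P_{T^*E})$ and for any $x \in \R^n$ one has $P_E(T x) = P_E(T(P_{T^*E} x))$ by the factorization. The hypothesis that $T^*|_E : E \to T^* E$ is injective ensures that $\dim T^* E = \dim E$, so that $S$ goes between spaces of equal dimension --- this is what makes the lemma useful in the sequel (when Lemma \ref{lem:wedge} is applied, it allows one to speak of $|\det S|$ and relate $|P_E T(K)|$ to $|P_{T^* E} K|$ by a single Jacobian factor).

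There is no real obstacle here; the main point is simply to recognize that the adjoint identity automatically delivers the kernel containment that makes the factorization well-defined. Any difficulty is purely bookkeeping about which orthogonal complement lives in which ambient space, and no further geometric input is needed.
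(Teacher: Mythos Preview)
Your argument is correct, and in fact more direct than the paper's. The paper does not argue the kernel containment $(T^*E)^\perp \subseteq \Ker(P_E \circ T)$ directly; instead it introduces the self-adjoint operator $M := T^* \circ P_E \circ T$, observes that $\Img M \subseteq T^*E$ and hence $\Ker M \supseteq (T^*E)^\perp$, factors $M = N \circ P_{T^*E}$ for some self-adjoint $N : T^*E \to T^*E$, and then sets $S := (T^*|_E)^{-1} \circ N$. Thus the paper's construction genuinely uses the injectivity hypothesis to \emph{define} $S$, whereas your factorization argument shows that $S$ exists regardless, and correctly relegates the hypothesis to its role in the subsequent application (ensuring $\dim T^*E = \dim E$ so that $|\det S|$ makes sense in Lemma~\ref{lem:wedge}). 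Your route is shorter and slightly more general; the paper's route has the minor advantage of giving an explicit formula $S = (T^*|_E)^{-1} \circ N$ with $N$ self-adjoint, though this is not exploited later.
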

\begin{proof}
The operator $M = T^* \circ P_E \circ T$ is self-adjoint so $\Img M \subseteq T^* E$ is an invariant subspace of $M$. Since $\Ker M = (\Img M)^{\perp} \supseteq (T^* E)^{\perp}$,  
we may therefore write:
\[
T^* \circ P_E \circ T = M =  N \circ P_{T^* E} ,
\]
for some self-adjoint linear map $N : T^* E \rightarrow T^* E$. It follows that (\ref{eq:S}) holds with $S = (T^*|_E)^{-1} \circ N$. 
\end{proof}

\begin{proof}[Proof of Lemma \ref{lem:wedge}]
We may assume that $\{x_i\}$ are linearly independent, otherwise both sides of (\ref{eq:wedge}) are zero and there is nothing to prove. Denote $E = \sspan\{x_1,\ldots,x_k\}$. 
Note that $P_E \circ T$ is onto $E$ iff $(\Img T)^{\perp} \cap E = \{ 0 \}$ iff $\Ker T^* \cap E = \{ 0 \}$, and so we may assume that $T^*|_E$ is injective, otherwise both sides of (\ref{eq:wedge}) are again zero. By Lemma \ref{lem:prepare-wedge}:
\[
|P_E \circ T(A)| = |S  \circ P_{T^* E}(A)| = |\det_{T^*E \rightarrow E} S | \cdot |P_{T^* E}(A)| , 
\]
where $|\det_{P \rightarrow Q} L|$, the (constant) Jacobian of the linear map $L : P \rightarrow Q$ between two isomorphic vector spaces $P$ and $Q$, is equal to $\sqrt{\det_{P \rightarrow P}  L^* L} = \sqrt{ \det_{Q \rightarrow Q} L L^*}$. Since $S S^* = S P_{T^* E} S^*$, we have by (\ref{eq:S}):
\[
|\det_{T^*E \rightarrow E} S| = \sqrt{\det_{E \rightarrow E} S S^*} = \sqrt{ \det_{E \rightarrow E} P_E T T^* P_E} = |\det_{E \rightarrow T^* E} T^*|_E |. 
\]
Hence:
\begin{align*}
& |P_{x_1\wedge\ldots\wedge x_k} T(A)| = \Delta(x_1,\ldots,x_k) |P_E T(A)| = \Delta(x_1,\ldots,x_k) |\det_{E \rightarrow T^* E} T^*|_E | \cdot |P_{T^* E}(A)|\\
& = \Delta(T^* x_1,\ldots,T^* x_k) |P_{T^* E}(A)|= |P_{T^*(x_1)\wedge \ldots \wedge T^*(x_k)} A|.
\end{align*}
\end{proof}

We are now ready to prove Proposition \ref{prop:key}
\begin{proof}[Proof of Proposition \ref{prop:key}]
We know that $K(t) = T^u_t(\tilde K)$ for some  $\tilde K \in \K(\R^{n+1})$, where $T^u_t$ is a projection which acts as identity on $\R^n$ and sends $e_{n+1}$ to $- t u$. 
One immediately checks that $(T^u_t)^*$ acts as the identity on $u^{\perp}$ (and in particular on $E \subset u^{\perp}$) and $(T^u_t)^*(u) = u - t e_{n+1}$.
Hence by Lemma \ref{lem:wedge}, if $y \in u^{\perp}$:
\[
|P_{E \wedge (s u + y)} K(t)| = |P_{E \wedge (s u - s t e_{n+1} + y)} \tilde K| .
\]
(see also \cite[(5)]{CampiGronchi-VolumeProductInqs} for the case $E = \{0\}$). 
The convexity in $(y,t)$ now follows from Lemma \ref{lem:convex}, as the map $(t,y) \mapsto su -s t e_{n+1} + y$ is affine (for fixed $s \in \R$). 
\end{proof}

\begin{rem} \label{rem:more-general}
We remark that the method described in this subsection is quite general, and may be used to show the following much more general version of Proposition \ref{prop:key}.
Let $\{K(t)\}_{t \in \R}$ be a shadow system in the direction of $u$. Then for any $x_1,\ldots,x_k \in u^{\perp}$, for any $a_1,\ldots,a_k \in \R$, and for any $s \in \R$, the function:
\[
u^{\perp} \times \R \ni (y,t) \mapsto |P_{(a_1 (s u + y) + x_1) \wedge \ldots \wedge (a_k (s u + y) + x_k)} K(t)| 
\]
is jointly convex. The proof is based on the following property, which generalizes Lemma \ref{lem:convex}: for any $x_1,\ldots,x_k \in \R^n$ and $a_1,\ldots,a_k \in \R$, the function $\R^n \ni z \mapsto |P_{(x_1+a_1 z) \wedge \ldots \wedge (x_k+ a_k z)} K|$ is convex. Since we do not require this level of generality here, we leave this to the interested reader. 
\end{rem}

\subsection{Proof of the isoperimetric inequality}

The conclusion of the proof of the inequality (\ref{eq:main}) of Conjecture \ref{conj:Lutwak} is now standard. It is well known  that given a compact set $K \subset \R^n$, there exists a sequence of directions $\{u_i \}_{i=1,2,\ldots} \subset \S^{n-1}$ so that the compact sets $K_i := S_{u_i} S_{u_{i-1}} \ldots S_{u_1} K$ converge in the Hausdorff topology to $B_K$, the Euclidean ball having the same volume as $K$ (see e.g. \cite[Theorem 9.1]{Gruber-ConvexAndDiscreteGeometry} for the case that $K$ is convex or \cite[Lemma 9.4.3]{BuragoZalgallerBook} for the general case). When $K$ is in addition a convex body, since Steiner symmetrization preserves convexity, all the $K_i$ are convex bodies as well. Clearly $\Phi_k$ is continuous on the class of convex bodies with respect to the Hausdorff topology (e.g. \cite[Theorem 1.8.20]{Schneider-Book-2ndEd}), and hence by Theorem \ref{thm:Steiner}:
\[
\Phi_k(K) \geq \Phi_k(K_1) \geq \ldots \geq \Phi_k(K_i) \searrow \Phi_k(B_K)  .  \makebox[0pt][l]{\qquad\qquad\qquad\qquad\qed} 
\]

\section{Further convexity properties} \label{sec:convexity}

After having proved Proposition \ref{prop:key}, we observed that we may actually obtain a different proof 
  of Proposition \ref{prop:key} which is modeled after the Meyer--Pajor proof of the Blaschke--Santal\'o inequality from \cite{MeyerPajor-Santalo}.  While a proof of the more general claim asserted in Remark \ref{rem:more-general} seems to be out of reach of this alternative approach, the proof we will present below has several advantages nevertheless. It highlights an intimate relation between Theorem \ref{thm:main} for general $k$ and the particular case $k=1$ corresponding to the Blaschke--Santal\'o inequality, which in a sense underlies our proof. Most importantly, it reveals a certain additional convexity property of $|P_{E \wedge (y + su)} K(t)|$ in $s$ when $s$ is varied harmonically, which will be crucially used in the characterization of local minimizers of $\Phi_k$. This property was first observed by Meyer--Reisner \cite{MeyerReisner-SantaloViaShadowSystems} for sections of the polar body (in fact, with respect to the Santal\'o point), corresponding to the case $k=1$.

\subsection{A harmonic generalization of Proposition \ref{prop:key}}

\begin{proposition} \label{prop:key-s}
Let $\{K(t)\}_{t \in \R}$ be a shadow system in the direction of $u \in \S^{n-1}$. Given $\alpha \in (0,1)$ and $s_0,s_1 \in \R_+$, set:
\begin{equation} \label{eq:s-lambda}
 \frac{1}{s_{\alpha}} := \frac{1-\alpha}{s_0} + \frac{\alpha}{s_1} ~,~ \lambda = \lambda_\alpha(s_0,s_1) := \frac{\alpha s_0}{\alpha s_0 + (1-\alpha) s_1} .
\end{equation}
Then for all $y_0,y_1 \in u^{\perp}$ and $t_0,t_1 \in \R$, denoting:
\[
y_\lambda := (1-\lambda) y_0 + \lambda y_1 ~,~ t_\alpha = (1-\alpha) t_0 + \alpha t_1 ,
\]
the following holds for any $E \in G_{u^{\perp},k-1}$:
\begin{equation} \label{eq:P-harmonic}
|P_{E \wedge (y_\lambda +  s_{\alpha} u)} K(t_\alpha)| \leq (1-\lambda) |P_{E \wedge (y_0 + s_0 u)} K(t_0)| + \lambda |P_{E \wedge (y_1 + s_1 u)} K(t_1)| . 
\end{equation}
Consequently:
\begin{equation} \label{eq:key-harmonic}
L_{E,u,s_{\alpha}}(K(t_\alpha))\supseteq (1-\lambda) L_{E,u,s_0}(K(t_0)) +  \lambda L_{E,u,s_1}(K(t_1)) ,
\end{equation}
and:
\begin{equation} \label{eq:fgh}
|L_{E,u,s_{\alpha}}(K(t_\alpha))| \geq |L_{E,u,s_0}(K(t_0))|^{1-\lambda} |L_{E,u,s_1}(K(t_1))|^{\lambda} . 
\end{equation}
\end{proposition}

\begin{rem} \label{rem:convention}
Throughout this section, we use the conventions that $s_\alpha = 0$ if $s_0 s_1 = 0$, and $\lambda = \alpha$ if $s_0 = s_1 = 0$. 
\end{rem}

Setting $s_0 = s_1 = s \in \R_+$ (which implies $\lambda = \alpha$), the joint convexity of $u^{\perp} \times \R \ni (y,t) \mapsto|P_{E \wedge (y + su)} K(t)|$ asserted in Proposition \ref{prop:key} immediately follows (the case of general $s \in \R$ is obtained after flipping the signs of $u$ and $t$). In fact, for the purpose of obtaining an alternative proof of Theorem \ref{thm:Steiner}, the only case of interest is when in addition $\alpha=\lambda = 1/2$, $(t_0,t_1) = (1,-1)$ and $\{K(t)\}$ is the linear reflection shadow system $\{K_u(t)\}$ from Lemma \ref{lem:reflection-shadow}. In this case, (\ref{eq:key-harmonic}) reduces to the inequality (\ref{eq:key-inq}) used in the proof of Theorem \ref{thm:Steiner}, after noting that $K_u(t_{1/2}) = K_u(0) = S_u K$ and $L_{E,u,s}(K_u(-1)) = L_{E,u,s}(R_u K) = -L_{E,u,s}(K)$ by (\ref{eq:flip}).
The reader only interested in the proof of Theorem \ref{thm:Steiner} may wish to only consider this case below, but the general case comes at no extra cost and will be vitally used towards the proof of Theorem \ref{thm:intro-local}.

\begin{proof}[Proof of Proposition \ref{prop:key-s}]
Since $|P_{E \wedge x} K|$ only depends on $P_{E^{\perp}} x$, it is enough to establish (\ref{eq:P-harmonic}) for $y_0,y_1 \in E^{\perp} \cap u^{\perp}$. 
Recall the notation $K^w = (K - w) \cap E^{\perp}$ for $w \in P_E K$. If the shadow system is given by $K(t) = T_t^u(\tilde K)$ for some $\tilde K \in \K(\R^{n+1})$, we set $\tilde K^w := (\tilde K - w) \cap \tilde E^\perp$, where $\tilde E^\perp$ denotes the orthogonal complement to $E$ in $\R^{n+1}$. Denoting $K^w(t) := T_t^u(\tilde K^w)$ for $w \in P_E K$, it follows since $u \in E^{\perp}$ that $K(t)^w = K^w(t)$. 
Consequently, to establish (\ref{eq:P-harmonic}), it is enough by (\ref{eq:Fubini}) to verify:
\[
|P_{y_\lambda + s_\alpha u} K^w(t_\alpha) | \leq (1-\lambda) |P_{y_0 + s_0 u} K^w(t_0)| + \lambda |P_{y_1 + s_1 u} K^w(t_1)|  
\]
for all $w \in P_E K$. This is particularly convenient since all projections are one-dimensional intervals. Parametrizing $E^{\perp}$ as $\{ (a,b) := a + b u \; ; \; a \in E^{\perp} \cap u^{\perp} , b \in \R\}$, we verify this as follows (using $* \in \{+,-\}$ and $i \in \{0,1\}$):
\begin{align}
\nonumber &|P_{y_\lambda + s_\alpha u} K^w(t_\alpha) | \\
= & \max \{ \sscalar{y_\lambda , a_{+} - a_{-}} + s_\alpha (b_+ - b_-) \; ; \; (a_*,b_*) \in K^w(t_\alpha)  \} \\
\label{eq:eq-inq} \leq & \max \set{ \sscalar{y_\lambda , a_{+} - a_{-}}  + s_\alpha \brac{\begin{array}{c} ((1-\alpha) r^+_0 + \alpha r^+_1) \\ - ((1-\alpha) r^-_0 + \alpha r^-_1) \end{array} } \; ; \; (a_*,r^{*}_i) \in K^w(t_i) } \\
\label{eq:eq-inq2} = & \max \set{ \sscalar{(1-\lambda) y_0 + \lambda y_1 , a_{+} - a_{-}}  + \brac{\begin{array}{c}
(1-\lambda) s_0 (r^+_0 - r^-_0 ) \\  + \lambda s_1 (r^+_1 - r^-_1 ) \end{array} } \; ; \; (a_*,r^{*}_i) \in K^w(t_i) } \\
\nonumber  \leq & (1-\lambda) \max \{ \sscalar{y_0 , a_+-a_-} + s_0 (r^+_0 - r^-_0) \; ; \; (a_*,r^{*}_0) \in K^w(t_0) \} \\
\nonumber & \;\;\; \; \; + \lambda \max \{ \sscalar{y_1 , a_+-a_-} + s_1 (r^+_1 - r^-_1) \; ; \; (a_*,r^{*}_1) \in K^w(t_1) \} \\
\nonumber  = & (1-\lambda) |P_{y_0 + s_0 u} K^w(t_0)| + \lambda |P_{y_1 + s_1 u} K^w(t_1)| . 
\end{align}
Here  the inequality (\ref{eq:eq-inq}) follows from (\ref{eq:shadow-system-convex}), and the transition in (\ref{eq:eq-inq2}) is due to:
\[
(1-\alpha) s_\alpha = (1-\lambda) s_0 ~,~ \alpha s_\alpha = \lambda s_1 . 
\]
Note that the above identities are consistent with our convention from Remark \ref{rem:convention}. 
The inclusion (\ref{eq:key-harmonic}) immediately follows from (\ref{eq:P-harmonic}) by definition (\ref{eq:LEus-def}), and (\ref{eq:fgh}) is a consequence of (\ref{eq:key-harmonic}) and the Brunn-Minkowski inequality. 
\end{proof}

\subsection{Convexity of the $s$-moment function}

Now introduce the following function, appearing in (\ref{eq:punch}):
\[ M_k(L_E(K_u(t))) := \brac{\int_{L_E(K_u(t))} \abs{\scalar{x,u}}^{k-1} dx}^{-1/k} =  \brac{\int_{\R} |s|^{k-1} |L_{E,u,s}(K_u(t))| ds}^{-1/k}  .
\] 
\begin{thm}  \label{thm:k-convex}
For all $u \in \S^{n-1}$ and $E \in G_{u^{\perp},k-1}$, the function $\R \ni t \mapsto  M_k(L_E(K_u(t)))$ is convex and even. 
\end{thm}

For the proof, we proceed as in \cite{MeyerReisner-SantaloViaShadowSystems}, and invoke the following harmonic Pr\'ekopa--Leindler-type inequality of K.~Ball \cite[p. 74]{Ball-kdim-sections} (see also \cite[Theorems 1.4.6 and 10.2.10]{AGA-Book-I}):
\begin{thm}[Ball] \label{ref:Ball}
Let $f,g,h : \Real_+ \rightarrow \R_+$ be measurable functions so that for some $\alpha \in (0,1)$ and all $s_0,s_1 \in \R_+$:
\[
h(s_\alpha) \geq f(s_0)^{1-\lambda_\alpha(s_0,s_1)} g(s_1)^{\lambda_\alpha(s_0,s_1)} ,
\]
where $s_\alpha$ and $\lambda_\alpha(s_0,s_1)$ are given by (\ref{eq:s-lambda}). 
Then for all $p > 0$, denoting $I_p(w) := \brac{\int_0^\infty s^{p-1} w(s) ds}^{-1/p}$, we have:
\[
I_p(h) \leq (1-\alpha) I_p(f) +  \alpha I_p(g) . 
\]
\end{thm}

\begin{corollary} \label{cor:k-convex-gen} 
With the same assumptions as in Proposition \ref{prop:key-s}, the function
\[
\R \ni t \mapsto \biggl(\int_{L_E(K(t))} \scalar{x,u}_+^{k-1} dx \biggr) ^{-1/k} =  \biggl( \int_{0}^\infty s^{k-1} |L_{E,u,s}(K(t))| ds \biggr) ^{-1/k} 
\]
is convex. 
\end{corollary}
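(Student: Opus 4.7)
The plan is to derive the corollary as a direct application of Ball's harmonic Pr\'ekopa--Leindler inequality (Theorem \ref{ref:Ball}) to the inequality (\ref{eq:fgh}) from Proposition \ref{prop:key-s}.

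\textbf{Step 1 (equivalence of the two expressions).} First I would verify the stated identity by Fubini. Parametrize $E^{\perp} = (E^{\perp} \cap u^{\perp}) \oplus \sspan(u)$ via $x = y + s u$ with $y \in E^{\perp} \cap u^{\perp}$, $s \in \R$. Then $\scalar{x,u}_+^{k-1} = s_+^{k-1}$ and, by definition, $1_{L_E(K(t))}(y + s u) = 1_{L_{E,u,s}(K(t))}(y)$. Integrating out $y$ first gives
\[
\int_{L_E(K(t))} \scalar{x,u}_+^{k-1} dx = \int_0^\infty s^{k-1} |L_{E,u,s}(K(t))| \, ds .
\]
This is finite because $L_E(K(t))$ is a bounded convex body (by Lemma \ref{lem:convex} and the fact that $|P_{E \wedge x} K(t)| \to \infty$ as $|x| \to \infty$).

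\textbf{Step 2 (Ball's theorem).} Fix $t_1, t_2 \in \R$ and put $f(s) := |L_{E,u,s}(K(t_1))|$, $g(s) := |L_{E,u,s}(K(t_2))|$, and $h(s) := |L_{E,u,s}(K(\tfrac{t_1+t_2}{2}))|$. The hypothesis of Theorem \ref{ref:Ball} is then exactly inequality (\ref{eq:fgh}). Applying the theorem with $p = k$ yields
\[
I_k(h) \leq \tfrac{1}{2}\brac{I_k(f) + I_k(g)} ,
\]
which is precisely the midpoint convexity of
\[
\Phi(t) := \brac{\int_0^\infty s^{k-1} |L_{E,u,s}(K(t))|\, ds}^{-1/k} .
\]

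\textbf{Step 3 (upgrading midpoint to genuine convexity).} For each fixed $s \in \R$, Proposition \ref{prop:key} shows that $(y,t) \mapsto |P_{E \wedge (y + s u)} K(t)|$ is jointly convex, so the set $\{(y,t) : |P_{E \wedge (y+su)} K(t)| \leq 1\}$ is a convex body whose $t$-sections are precisely $L_{E,u,s}(K(t))$. By Brunn's concavity principle, $t \mapsto |L_{E,u,s}(K(t))|^{1/(n-k)}$ is concave on its support, hence continuous in $t$. Combined with a uniform bound on these bodies as $t$ ranges in a compact interval (the shadow system $K(t)$ being itself continuous in Hausdorff topology), dominated convergence yields continuity of $\Phi$. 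A midpoint convex continuous function on $\R$ is convex, giving the claim.

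The substantive content of the argument is entirely contained in the pairing of Proposition \ref{prop:key-s} with Ball's inequality; the only thing requiring any care is the measurability/continuity discussion in Step 3 used to promote midpoint convexity to full convexity, and this is routine given the convex-geometric framework already set up.
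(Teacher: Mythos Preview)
Your proof is correct and follows essentially the same route as the paper: apply Ball's harmonic Pr\'ekopa--Leindler inequality (Theorem~\ref{ref:Ball}) with $p=k$ to the inequality (\ref{eq:fgh}) from Proposition~\ref{prop:key-s} to obtain midpoint convexity, then pass to full convexity by continuity. Your Steps~1 and~3 simply spell out the Fubini identity and the continuity argument that the paper leaves implicit (the paper also notes, as an alternative to continuity, that one may redo Proposition~\ref{prop:key-s} with general weights $\alpha+\beta=1$).
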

\begin{proof}
Immediate by an application of Theorem \ref{ref:Ball} to the functions $h(s) := w_{t_\alpha}(s)$, $f(s) := w_{t_0}(s)$ and $g(s) := w_{t_1}(s)$ with $w_t(s) := |L_{E,u,s}(K(t))|$ and $t_\alpha := (1-\alpha) t_0 + \alpha t_1$, after recalling (\ref{eq:fgh}). Note that $\R \ni s \mapsto |L_{E,u,s}(K(t))|$ is measurable by the discussion following (\ref{eq:LEus-def}). 
\end{proof}

In the case $k=1$ and when $\{ K(t) \}$ are all origin-symmetric, Corollary \ref{cor:k-convex-gen} amounts to the convexity of $t \mapsto |K(t)^{\circ}|^{-1}$, first established by Campi--Gronchi \cite{CampiGronchi-VolumeProductInqs}, and extended by Meyer--Reisner to the convexity of $t \mapsto |K(t)^{\circ,s}|^{-1}$ for general convex bodies \cite{MeyerReisner-SantaloViaShadowSystems}.  

\bigskip

Applying Corollary \ref{cor:k-convex-gen} to the linear reflection shadow system $\{K_u(t)\}$ from Lemma \ref{lem:reflection-shadow}, for which $K_u(-t) = R_u K_u(t)$ and hence by (\ref{eq:LEus-def}) and (\ref{eq:flip}):
\[
L_{E,u,s}(K_u(-t)) = L_{E,u,-s}(K_u(t)) = - L_{E,u,s}(K_u(t)) =  - L_{E,u,-s}(K_u(-t)) ,
\]
Theorem \ref{thm:k-convex} is established.

\subsection{A dichotomy for $t \mapsto \Phi_k(K_u(t))$} 

By Theorem \ref{thm:monotone} we already know that $\R_+ \ni t \mapsto \Phi_k(K_u(t)) = \Phi_k(K_u(-t))$ is monotone non-decreasing. The next theorem, in which the above convexity will be crucially used, adds vital information -- this function transitions from being constant on $[0,a]$ to strictly monotone on $[a,\infty)$ at a unique $a \in [0,\infty]$.  

\begin{thm} \label{thm:all-t}
Given $u \in \S^{n-1}$ and $t_0 \in \R$,  the equality $\Phi_k(K_u(t_1)) = \Phi_k(K_u(t_0))$ holds for some $|t_1| < |t_0|$ if and only if it holds for all $|t_1| < |t_0|$. 
\end{thm}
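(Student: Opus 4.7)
The plan is to combine the integral representation of $\Phi_k(K_u(t))^{-n}$ established in the proof of Theorem \ref{thm:Steiner} with the harmonic convexity in $t$ from Theorem \ref{thm:k-convex}, thereby reducing matters to an elementary lemma about convex even functions. Specifically, by Lemma \ref{lem:mu} together with the definitions of $\mu_u$ and of $M_k$,
\[
\Phi_k(K_u(t))^{-n} = C \int_{G_{u^{\perp},k-1}} h_E(t)^{-k}\, \sigma_{u^{\perp},k-1}(dE),
\]
where $h_E(t) := M_k(L_E(K_u(t)))$ and $C>0$ depends only on $n,k$. By Theorem \ref{thm:k-convex}, $t \mapsto h_E(t)$ is convex and even, and since any convex even function attains its minimum at the origin, $h_E$ is non-decreasing on $[0,\infty)$.

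Using the evenness of $t \mapsto \Phi_k(K_u(t))$, I may assume $t_0>0$ and $t_1 \in [0,t_0)$; the goal is to show that the hypothesis forces $\Phi_k(K_u(t)) = \Phi_k(K_u(t_0))$ for every $t \in [0,t_0]$. Theorem \ref{thm:monotone} already gives constancy of $t \mapsto \Phi_k(K_u(t))^{-n}$ on $[t_1,t_0]$, so for every such $t$,
\[
\int_{G_{u^{\perp},k-1}} \bigl(h_E(t_1)^{-k} - h_E(t)^{-k}\bigr)\, \sigma_{u^{\perp},k-1}(dE) = 0.
\]
The integrand is non-negative, hence vanishes $\sigma$-a.e. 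Taking the intersection of the corresponding null sets over a countable dense subset of $[t_1,t_0]$ and invoking continuity of the convex functions $h_E$, one obtains a full-measure set $A \subset G_{u^{\perp},k-1}$ on which $h_E$ is constant on $[t_1,t_0]$.

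The final step propagates this constancy from $[t_1,t_0]$ back to $[0,t_0]$. If $t_1=0$ this is immediate from monotonicity. If $t_1>0$ and $E \in A$, then for any $t \in [0,t_1)$ the point $t_1$ is a nontrivial convex combination of $t$ and $t_0$, and convexity of $h_E$ together with $h_E(t_1)=h_E(t_0)$ yields $h_E(t) \ge h_E(t_1)$; monotonicity on $[0,\infty)$ supplies the reverse inequality. So $h_E$ is constant on $[0,t_0]$ for every $E \in A$, and integrating against $\sigma$ gives the desired conclusion.

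The main obstacle is not any single calculation but rather recognizing the right tool: without the harmonic convexity of Theorem \ref{thm:k-convex} one would only know the monotonicity of Theorem \ref{thm:monotone}, which suffices to push equality \emph{forward} from $[t_1,t_0]$ but not \emph{backward} into $[0,t_1]$. The elementary fact being exploited is that a convex non-decreasing function on $[0,\infty)$ taking equal values at two distinct points $a<b$ must be constant on the entire interval $[0,b]$ -- a statement that obviously fails for merely monotone functions. A minor technical point is the Fubini-type upgrade from per-$t$ to uniform-in-$t$ constancy on a common full-measure set of $E$'s, but this is automatic thanks to continuity of convex functions.
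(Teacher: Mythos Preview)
Your proof is correct and follows essentially the same approach as the paper: both represent $\Phi_k(K_u(t))^{-n}$ as the integral of $M_k(L_E(K_u(t)))^{-k}$ over $G_{u^\perp,k-1}$, use monotonicity in $t$ to force $M_k(L_E(K_u(t_1))) = M_k(L_E(K_u(t_0)))$ for (almost) every $E$, and then invoke the convexity and evenness of Theorem~\ref{thm:k-convex} to conclude constancy on all of $[-t_0,t_0]$. The paper is marginally more direct---it compares only the two endpoints $t_1,t_0$ and appeals to continuity in $E$ rather than your countable-dense-in-$t$ argument---but the underlying mechanism is identical.
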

\begin{proof}
Assume $\Phi_k(K_u(t_1)) = \Phi_k(K_u(t_0))$ for some $|t_1| < |t_0|$, or equivalently (by Lemma \ref{lem:mu}), $\mu_{k,u}(L_{k,u}(K_u(t_1))) = \mu_{k,u}(L_{k,u}(K_u(t_0)))$. Recall from (\ref{eq:punch}) and the subsequent discussion that:
\begin{equation} \label{eq:mu-Mk}
\mu_{k,u}(L_{k,u}(K_u(t))) = \int_{G_{u^{\perp},k-1}} M_k(L_E(K_u(t)))^{-k} \sigma_{u^{\perp},k-1} (d E) .
\end{equation}
Theorem \ref{thm:k-convex} implies in particular that $\R_+ \ni t \mapsto M_k(L_E(K_u(t))) = M_k(L_E(K_u(-t)))$ is monotone non-decreasing  for all $E \in G_{u^{\perp},k-1}$ (alternatively, a simpler way to deduce the monotonicity is by (\ref{eq:VolLmonotone})). 
It follows that necessarily:
\[
M_k(L_E(K_u(\pm t_1))) = M_k(L_E(K_u(\pm t_0)))  ,
\]
for $\sigma_{u^\perp,k-1}$-almost-every $E \in G_{u^{\perp},k-1}$. Invoking Theorem \ref{thm:k-convex}, it follows that $[-|t_0|,|t_0|] \ni t \mapsto M_k(L_E(K_u(t)))$ must be constant for $\sigma_{u^\perp,k-1}$-almost-every $E \in G_{u^{\perp},k-1}$. Recalling (\ref{eq:mu-Mk}) and Lemma \ref{lem:mu}, we deduce that $\Phi_k(K_u(t)) = \Phi_k(K_u(t_0))$ for all $t \in [-|t_0|,|t_0|]$.
\end{proof}

\begin{rem}
One might try to prove Theorem \ref{thm:all-t} by expanding on the argument of Theorem \ref{thm:monotone}. By Brunn's concavity principle, we know that $\R \ni t \mapsto |L_{E,u,s}(K_u(t))|^{\frac{1}{n-k}}$ is concave and even on its support, but the problem is that for a given $s \in \R$, the support may be a strict subset of $[-|t_1|,|t_1|]$, and so we cannot conclude that the latter function is constant on any interval. 
\end{rem}

The usefulness of a statement like Theorem \ref{thm:all-t} for characterizing \emph{local} extremizers was observed in the case $k=1$ by Meyer--Reisner \cite{MeyerReisner-LocalSantalo}.

\section{Analysis of equality} \label{sec:equality1}

Let $K$ be a convex body in $\R^n$ and fix $k \in \{ 1,\ldots,n-1 \}$. In the next two sections we will establish the equality case of Theorem \ref{thm:intro-Steiner}:
\begin{thm} \label{thm:equality} 
$\Phi_k(K) = \Phi_k(S_u K)$ for all $u \in \S^{n-1}$ if and only if $K$ is an ellipsoid. 
\end{thm}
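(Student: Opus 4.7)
The ``if'' direction is immediate: Steiner symmetrization maps ellipsoids to ellipsoids of the same volume, and by affine invariance of $\Phi_k$ all ellipsoids of volume $|K|$ share the value $\Phi_k(B_K)$, so $\Phi_k(K) = \Phi_k(S_u K) = \Phi_k(B_K)$ automatically. For the converse, suppose $\Phi_k(K) = \Phi_k(S_u K)$ for every $u \in \S^{n-1}$. The strategy is to trace backwards through the proof of Theorem \ref{thm:Steiner}, identify the resulting rigidity on each fiber, and then combine the rigidity across varying $u$ to force $K$ to be an ellipsoid.

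By Lemma \ref{lem:mu}, the hypothesis reads $\mu_u(L_{k,u}(K)) = \mu_u(L_{k,u}(S_u K))$ for every $u$. In view of the fiberwise decomposition (\ref{eq:punch}) and the inequality $|L_{E,u,s}(K)| \leq |L_{E,u,s}(S_u K)|$ obtained from Brunn--Minkowski applied to the origin-symmetric convex body $\tilde L_{E,u,s} \subset (E^\perp \cap u^\perp) \times \R$, integral equality forces fiberwise equality for $\mu_u$-almost every $(E,s) \in G_{u^\perp,k-1} \times \R$. Unraveling the inclusion $L_{E,u,s}(S_u K) \supseteq \tfrac{1}{2}(L_{E,u,s}(K) - L_{E,u,s}(K))$ together with the equality case of Brunn--Minkowski in the $(n-k)$-dimensional space $E^\perp \cap u^\perp$, we conclude that the convex body $L_{E,u,s}(K)$ must possess a center of symmetry $c_{E,u,s} \in E^\perp \cap u^\perp$ for a.e.\ such $(E,s)$, and by continuity for all $(E,s)$ at which $L_{E,u,s}(K)$ is non-degenerate.

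This rigidity can be propagated along the whole linear system $\{K_u(t)\}_{t \in [-1,1]}$ via Theorem \ref{thm:all-t}: since $\Phi_k(K_u(1)) = \Phi_k(K_u(0))$, the dichotomy forces $t \mapsto \Phi_k(K_u(t))$ to be constant on $[-1,1]$, and the same equality analysis then yields central symmetry of $L_{E,u,s}(K_u(t))$ for all $t \in [-1,1]$ and a.e.\ $(E,s)$. Unpacking the definition through (\ref{eq:Fubini}), central symmetry of $L_{E,u,s}(K)$ at $c_{E,u,s}$ encodes an even-symmetry condition, about $y = c_{E,u,s}$, on the integral
\[
\int_{E}\bigl(h_{K^w}(y+su) + h_{K^w}(-(y+su))\bigr)\, dw .
\]
Letting $u$, $s$, $E$, and $t$ vary produces a very rich family of hidden symmetries of the support functions of the one-dimensional sections $K^w$ of $K$.

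The main obstacle, and the heart of the analysis, is to convert this abundant but indirect symmetry data into the statement that $K$ itself is an ellipsoid. I expect the argument to proceed in two stages: first, use symmetries in the $u$-direction to locate an affine center for $K$ (so one may translate to assume $K = -K$), and then leverage the joint symmetries across the Grassmannian to show that sufficiently many low-dimensional projections of $K$ are centrally symmetric, invoking a classical characterization of ellipsoids in the spirit of Blaschke's theorem on centrally symmetric projections. The range $1 \leq k \leq n-2$ affords the flexibility of a positive-dimensional fiber $E^\perp \cap u^\perp$, and this is what makes the argument feasible in the present section; the borderline case $k = n-1$, where the fiber collapses to a point and the Brunn--Minkowski equality condition degenerates, demands the finer shadow-system convexity of Section \ref{sec:convexity} (in particular Theorem \ref{thm:k-convex} and Corollary \ref{cor:k-convex-gen}) and is accordingly deferred to Section \ref{sec:equality2}.
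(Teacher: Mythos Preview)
Your setup through Step 1 is correct: from $\Phi_k(K)=\Phi_k(S_uK)$ you rightly deduce, via (\ref{eq:punch}), (\ref{eq:key-inq}), and the Brunn--Minkowski equality case, that each section $L_{E,u,s}(K)$ of $L_E(K)$ has a point of symmetry. (The detour through Theorem \ref{thm:all-t} is unnecessary here; that result is used only for the \emph{local} minimizer theorem, not for this equality characterization.)

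The gap is everything after that. You write that you ``expect the argument to proceed in two stages''---first reduce to $K=-K$, then exhibit centrally symmetric projections and quote a Blaschke-type projection theorem---but you do not carry this out, and it is unclear how to: the symmetry you have obtained lives on sections of the polar-type bodies $L_E(K)$, not on projections of $K$, so a projection-based characterization is not directly accessible. The paper's route is quite different and contains several ideas absent from your sketch. First, since every hyperplane section of the $(n-k+1)$-dimensional body $L_E(K)$ has a point of symmetry and $n-k+1\ge 3$ when $k\le n-2$, Brunn's section-characterization of ellipsoids (Theorem \ref{thm:Brunn}) forces $L_E(K)$ itself to be an ellipsoid. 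Second, one applies the linear map $T_E$ rounding $L_E(K)$ to a ball, picks the eigenbasis $\{u_i\}$ of $T_E$, and shows $L_E(S_{u_i}K_E)=L_E(K_E)$. Third, a separate lemma (Step 4 of Section \ref{sec:equality1}) proves that $L_E(S_uK)=L_E(K)$ forces each fiber $K^w=(K-w)\cap E^\perp$ to be reflection-symmetric about $u^\perp$ up to translation; combining over the orthogonal $u_i$ gives each $K^w$ a point of symmetry. Finally, a \emph{second} application of Brunn's theorem, now to $K$ itself via its sections $K^w$, finishes. None of these steps---the first Brunn application to $L_E(K)$, the $T_E$-diagonalization, the reflection lemma, or the second Brunn application---appears in your proposal.

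For $k=n-1$, the relevant new ingredient is not Theorem \ref{thm:k-convex} or Corollary \ref{cor:k-convex-gen} but the two-dimensional segment analysis of Proposition \ref{prop:segments}, which feeds into a Bertrand--Brunn characterization of ellipses for $L(K^w)$.
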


\begin{corollary} \label{cor:equality}
$\Phi_k(K) = \Phi_k(B_K)$ if and only if $K$ is an ellipsoid. 
\end{corollary}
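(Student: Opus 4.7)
The corollary is an immediate consequence of Theorem \ref{thm:equality} together with the inequality already established in Section \ref{sec:main}, by a short sandwich argument. I would not try to re-prove the ellipsoid characterization from scratch here; the substantive work sits in Theorem \ref{thm:equality}.

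The plan is as follows. The ``if'' direction is trivial: if $K$ is an ellipsoid, then $K = T(B_K)$ for some volume-preserving affine transformation $T$ (since $|K| = |B_K|$), and the affine invariance of $\Phi_k$ (due to Grinberg, recalled in the Introduction) yields $\Phi_k(K) = \Phi_k(B_K)$.

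For the ``only if'' direction, suppose $\Phi_k(K) = \Phi_k(B_K)$. Fix an arbitrary $u \in \S^{n-1}$. Steiner symmetrization preserves volume, so $B_{S_u K} = B_K$. Applying the inequality part of Conjecture \ref{conj:Lutwak} (which was established in Section \ref{sec:main} before any equality analysis) to the convex body $S_u K$, we get
\[
\Phi_k(S_u K) \geq \Phi_k(B_{S_u K}) = \Phi_k(B_K).
\]
On the other hand, Theorem \ref{thm:Steiner} gives $\Phi_k(K) \geq \Phi_k(S_u K)$. Chaining these with the hypothesis,
\[
\Phi_k(B_K) = \Phi_k(K) \geq \Phi_k(S_u K) \geq \Phi_k(B_K),
\]
so all of the above are equalities; in particular $\Phi_k(K) = \Phi_k(S_u K)$. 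Since $u \in \S^{n-1}$ was arbitrary, Theorem \ref{thm:equality} applies and forces $K$ to be an ellipsoid.

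There is essentially no technical obstacle at this stage; the entire difficulty has been off-loaded onto Theorem \ref{thm:equality}, whose proof (divided into the ranges $1 \leq k \leq n-2$ and $k = n-1$) is the content of Sections \ref{sec:equality1} and \ref{sec:equality2}. The only thing to be slightly careful about is to invoke only the inequality form of Conjecture \ref{conj:Lutwak} when sandwiching $\Phi_k(S_u K)$, so as not to create a circular dependence on the equality characterization being deduced.
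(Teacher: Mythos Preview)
Your proof is correct and essentially identical to the paper's own argument: both use the sandwich $\Phi_k(K) \geq \Phi_k(S_u K) \geq \Phi_k(B_K)$ (from Theorem \ref{thm:Steiner} and the already-established inequality (\ref{eq:main})) to force $\Phi_k(K) = \Phi_k(S_u K)$ for every $u$, and then invoke Theorem \ref{thm:equality}. Your remark about avoiding circularity by using only the inequality form of (\ref{eq:main}) is apt and matches the paper's logic exactly.
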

\begin{proof}[Proof of Corollary \ref{cor:equality} given Theorem \ref{thm:equality}]
If $K$ is an ellipsoid, the invariance of $\Phi_k$ under volume preserving affine maps implies that $\Phi_k(K) = \Phi_k(B_K)$. Conversely, since we always have:
\[
\Phi_k(K) \geq \Phi_k(S_u K) \geq \Phi_k(B_K)  \;\;\; \forall u \in \S^{n-1} 
\]
by Theorems \ref{thm:Steiner} and (\ref{eq:main}), if $\Phi_k(K) = \Phi_k(B_K)$ then we have equality above, and so $K$ must be an ellipsoid by Theorem \ref{thm:equality}. 
\end{proof}

\begin{proof}[Proof of Theorem \ref{thm:intro-local} given Theorem \ref{thm:equality}]
Assume that $K$ is a local minimizer of $\Phi_k$ among all convex bodies of a given volume with respect to the Hausdorff topology. Recall that $\{K_u(t)\}$ denotes the linear reflection shadow system associated to $K$ in the direction of $u \in \S^{n-1}$ from Lemma \ref{lem:reflection-shadow}. For every $u \in \S^{n-1}$, since $t \mapsto K_u(t)$ is continuous in the Hausdorff topology, we know that there exists $\eps \in (0,1)$ so that $\Phi_k(K_u(1-\eps)) \geq \Phi_k(K)$ (as $|K_u(t)| = |K|$ for all $t \in [-1,1]$). On the other hand, by  Theorem \ref{thm:monotone}, we know that $\Phi_k(K) \geq \Phi_k(K_u(t))$ for all $t \in [-1,1]$, and hence we must have equality at $t = 1-\eps$. Therefore,  Theorem \ref{thm:all-t} implies that we have equality for all $t \in [-1,1]$, and in particular at $t=0$, i.e. $\Phi_k(K) = \Phi_k(S_u K)$. Since this holds for all $u \in \S^{n-1}$, Theorem \ref{thm:equality} implies that $K$ must be an ellipsoid. 
\end{proof}

The trivial direction of Theorem \ref{thm:equality} follows from the well-known fact (e.g. \cite[Lemma 2]{BLM-SteinerSymmetrizations}) that Steiner symmetrization transforms an ellipsoid into an ellipsoid (of the same volume), together with the affine-invariance of $\Phi_k$. 
The proof of the non-trivial direction consists of several steps. Steps 1 and 2 are inspired by the Meyer--Pajor simplification \cite{MeyerPajor-Santalo} of Saint-Raymond's analysis in \cite{SaintRaymond-Santalo} 
 of the equality case in the Blaschke--Santal\'o inequality for \emph{origin-symmetric} convex bodies; however, to treat general convex bodies, we put forward several new observations in Steps 3 and 4 which are new even in the classical case $k=1$.  In Step 5 we conclude the proof in the range $1 \leq k \leq n-2$. The remaining case $k=n-1$ requires more work, which is deferred to the next section. 

\subsection{Step 1 - point of symmetry}

Given a convex body $K \subset \R^n$ and $E \in G_{n,k-1}$, recall Definition \ref{defn:E-polar-body} of $L_E(K)$, 
and that $L_E(K)$ is an origin-symmetric convex body in $E^{\perp}$ by Lemma \ref{lem:convex} and the preceding discussion. It is useful to note that:
\begin{lemma} \label{lem:continuous0}
The map $G_{n,k-1} \ni E \mapsto L_E(K) \in \K(\R^n)$ is continuous in the Hausdorff topology. 
\end{lemma}
\begin{proof}
Recall from (\ref{eq:rolodex-prop}) that $\norm{\theta}_{L_E(K)} = |P_{\sspan(E,\theta)} K|$ for all $\theta \in \S(E^{\perp})$. Consequently, $\frac{1}{R} B_{E^{\perp}} \subseteq L_E(K) \subseteq \frac{1}{r} B_{E^{\perp}}$ uniformly for all $E \in G_{n,k-1}$, with $r = \min_{F \in G_{n,k}} |P_F K| > 0$ and $R = \max_{F \in G_{n,k}} |P_F K|> 0$. Since $G_{n,k-1} \ni E \mapsto \frac{1}{r} B_{E^\perp}$ is continuous in the Hausdorff topology, to establish the asserted continuity, it is therefore enough to show that $G_{n,k} \ni F \mapsto |P_F K|$ is continuous (and hence uniformly continuous by compactness of $G_{n,k}$). While for general compact sets $K$ this may be false, for convex bodies $K$ we have (e.g. by (\ref{eq:Fedotov})):
\[
|P_F K| = \frac{{n \choose k}}{|B_2^{n-k}|} V(K,k ; B_{F^{\perp}} , n-k) \;\;\; \forall F \in G_{n,k} ,
\]
and so the continuity of $G_{n,k} \ni F \mapsto |P_F K|$ follows by the (joint) continuity of mixed volume with respect to the Hausdorff topology. 
\end{proof}

Given $u \in \S^{n-1}$ and  $E \in G_{u^{\perp},k-1}$, recall the definition (\ref{eq:LEus-def}) of $L_{E,u,s}(K)$, 
so that $L_{E,u,s}(K)$ is the section of $L_E(K)$ perpendicular to $u$ at height $s \in \R$:
 \begin{equation} \label{eq:section}
 L_{E,u,s}(K) = (L_E(K) - s u) \cap u^{\perp}  \subset E^{\perp} \cap u^{\perp} .
\end{equation}
In particular, $L_{E,u,s}(K)$ is convex and compact, but it may be empty. Denote:
\[
\GG_u(K)  := \{ (E,s) \in G_{u^{\perp},k-1} \times \R \; ; \; L_{E,u,s}(K) \neq \emptyset \} ,
\]
so that $L_{E,u,s}(K) \in \K(E^{\perp} \cap u^{\perp}) \subset \K(\R^n)$ if and only if $(E,s) \in \GG_u(K)$. 
By Lemma \ref{lem:continuous0} and compactness of each $L_E(K)$, it follows that $\GG_u(K)$ is a closed (in fact compact) set (since if $L_{E,u,s}(K) = \emptyset$ then $s u + u^\perp$ must be at a positive distance from $L_E(K)$). 

\smallskip
The following is another immediate consequence of Lemma \ref{lem:continuous0}:
\begin{corollary} \label{cor:interior}
Let $(E,s) \in G_{u^{\perp},k-1} \times \R$ be such that $\interior L_E(K) \cap (s u + u^{\perp}) \neq \emptyset$. Then $(E,s) \in \interior \GG_u(K)$. 
\end{corollary}

We also have:

\begin{lemma} \label{lem:continuous}
The map $\GG_u(K) \ni (E,s) \mapsto |L_{E,u,s}(K)| \in \R_+$ is continuous. 
\end{lemma}
\begin{proof}
The map $\GG_u(K) \ni (E,s) \mapsto L_{E,u,s}(K) \in \K(\R^n)$ is continuous in the Hausdorff topology by Lemma \ref{lem:continuous0} and (\ref{eq:section}). In addition, the map $G_{u^{\perp},k-1} \ni E \mapsto B_{\sspan(E,u)} \in \K(\R^n)$ is trivially continuous in the Hausdorff topology (recall that $B_F$ denotes the unit Euclidean ball in the subspace $F$). 
Finally, note that since $L_{E,u,s}(K) \in \K(E^{\perp} \cap u^{\perp})$
 and $\dim(E^{\perp} \cap u^{\perp}) = n-k$, we have as in Lemma \ref{lem:continuous0} (e.g. by (\ref{eq:Fedotov})):
\[
 |L_{E,u,s}(K)| = \frac{{n \choose k}  }{|B_2^k|} V(B_{\sspan(E,u)} , k; L_{E,u,s}(K), n-k) .
\]
The asserted continuity of $|L_{E,u,s}(K)|$ on $\GG_u(K)$ now follows from the (joint) continuity of mixed volume on $\K(\R^n)^{\times n}$ with respect to the Hausdorff topology. 
\end{proof}
\begin{corollary}
The map $G_{u^{\perp},k-1} \times \R \ni (E,s) \mapsto |L_{E,u,s}(K)|\in \R_+$ is Borel measurable. 
\end{corollary}
\begin{proof}
It is continuous on the compact set $\GG_u(K)$ and zero outside of it.
\end{proof}

Also recall from (\ref{eq:key-inq}) that for all $s \in \R$:
\begin{equation} \label{eq:BM-supset}
L_{E,u,s}(S_u K) \supseteq \frac{1}{2}(L_{E,u,s}(K) - L_{E,u,s}(K)) ,
\end{equation}
and hence  by the Brunn--Minkowski inequality:
\begin{equation} \label{eq:LEus-geq}
|L_{E,u,s}(S_u K)| \geq |L_{E,u,s}(K)| .
\end{equation}
In particular, $\GG_u(K) \subseteq \GG_u(S_u K)$. 

\begin{proposition} \label{prop:yes}
Given $u \in \S^{n-1}$, assume that $\Phi_k(K) = \Phi_k(S_u K)$. Then:
\begin{equation} \label{eq:LEus-equal}
|L_{E,u,s}(S_u K)| = |L_{E,u,s}(K)| 
\end{equation}
for all $E \in G_{u^{\perp},k-1}$ and $s \in \R$, and moreover:
\begin{equation} \label{eq:translation}
L_{E,u,s}(K) = L_{E,u,s}(S_u K) + \alpha_{E,u,s} ,
\end{equation}
for some translation vector $\alpha_{E,u,s} \in E^{\perp} \cap u^{\perp}$.
\end{proposition}
\begin{proof}
By Lemma \ref{lem:mu} and Remark \ref{rem:finite}, the assumption is equivalent to $\mu_{k,u}(L_{k,u}(K)) = \mu_{k,u}(L_{k,u}(S_u K)) < \infty$. In view of (\ref{eq:LEus-geq}) and (\ref{eq:punch}), it follows that (\ref{eq:LEus-equal}) must hold for $\sigma_{u^{\perp},k-1} \otimes \L$-almost-every $(E,s) \in G_{u^{\perp},k-1} \times \R$. By Corollary \ref{cor:interior}, Lemma \ref{lem:continuous} (applied to $K$ and $S_u K$) and the inclusion $\GG_u(K) \subseteq \GG_u(S_u K)$, it follows that (\ref{eq:LEus-equal}) holds for all $(E,s) \in G_{u^{\perp},k-1} \times \R$ so that $\interior L_E(K) \cap (s u + u^{\perp}) \neq \emptyset$. 

Now fix $E \in G_{u^{\perp},k-1}$, and denote by $\Sigma_E$ the (non-empty) interior of the compact interval $P_{\sspan(u)} L_E(K)$, which we identify with an open interval in $\R$ (via $s u \leftrightarrow s$). We know that (\ref{eq:LEus-equal}) holds for all $s \in \Sigma_E$, and that both sides of (\ref{eq:LEus-equal}) are continuous functions of $s$ on their support by (\ref{eq:section}); consequently, the validity of (\ref{eq:LEus-equal}) extends to all $s \in \overline{\Sigma}_E$, the closure of $\Sigma_E$.

 For $s \notin \overline{\Sigma}_E$, $L_E(K) \cap (s u + u^{\perp}) = \emptyset$ and $|L_{E,u,s}(K)| = 0$, and it follows that for $\sigma_{u^{\perp},k-1}$-almost-every $E \in G_{u^{\perp},k-1}$, $|L_{E,u,s}(S_u K)| = 0$ for all $s \notin \overline{\Sigma}_E$, i.e. that $L_E(S_u K) \cap (s u + u^{\perp}) = \emptyset$ for such $s$. In other words, for $\sigma_{u^{\perp},k-1}$-almost-every $E \in G_{u^{\perp},k-1}$:
\begin{equation} \label{eq:gotcha}
L_E(S_u K) \subset \overline{\Sigma}_E \, u + u^{\perp}.
\end{equation}
 But according to Lemma \ref{lem:continuous0}, $G_{u^{\perp},k-1} \ni E \mapsto \overline{\Sigma}_E$ and $G_{u^{\perp},k-1} \ni E \mapsto L_E(S_u K)$ are continuous in the Hausdorff topology, and consequently (\ref{eq:gotcha}) remains valid for all $E \in G_{u^{\perp},k-1}$. In other words, $|L_{E,u,s}(S_u K)| = 0 = |L_{E,u,s}(K)|$ for all $E \in G_{u^{\perp},k-1}$ and $s \notin \overline{\Sigma}_E$, and so we have verified (\ref{eq:LEus-equal}) for all $E \in G_{u^{\perp},k-1}$ and $s \in \R$.  
 
By the equality case of the Brunn-Minkowski inequality, we deduce from (\ref{eq:BM-supset}) and (\ref{eq:LEus-equal}) that for all $E \in G_{u^{\perp},k-1}$ and $s \in \Sigma_E$, equality (\ref{eq:translation}) must hold with some $\alpha_{E,u,s} \in E^{\perp} \cap u^{\perp}$. On the other hand, for $s \notin \overline{\Sigma}_E$ we've already established that $L_{E,u,s}(K) = L_{E,u,s}(S_u K) = \emptyset$, and so (\ref{eq:translation}) also holds for those $s$'s as well. It follows by (\ref{eq:section}) and continuity of the section of a convex body in the Hausdorff topology that (\ref{eq:translation}) remains valid also for $s_0 \in \partial \Sigma_E$ with $\alpha_{E,u,s_0} = \lim_{\Sigma_E \ni s \rightarrow s_0} \alpha_{E,u,s}$. This concludes the proof. 
\end{proof}

Now assume that $\Phi_k(K) = \Phi_k(S_u K)$ for all $u \in \S^{n-1}$.
Recall from the proof of Theorem \ref{thm:Steiner} that $L_{E,u,s}(S_u K)$ is origin-symmetric (being the level-set of the even function $f^{(s)}(\cdot,0)$, as verified in (\ref{eq:flip})). It follows from (\ref{eq:translation}) that $L_{E,u,s}(K)$ has a point of symmetry for all $E \in G_{n,k-1}$, $u \in \S(E^{\perp})$ and $s \in \R$.

 \subsection{Step 2 - Brunn's characterization} 
 
We now invoke the following characterization of ellipsoids, originating in Brunn's 1889 Habilitation \cite{Brunn-Habilitation} (see the False Center Theorem of Aitchison--Petty--Rogers \cite{AitchisonPettyRogers} for a self-contained proof of a more general statement, and Burton--Mani \cite{BurtonMani} for an alternative proof of the latter):
 
 \begin{thm}[Brunn's characterization] \label{thm:Brunn}
 Let $L$ be a convex body in $\R^q$, $q \geq 3$, and let $2 \leq p \leq q-1$. Then $L$ is an ellipsoid iff every $p$-dimensional section of $L$ through its interior has a point of symmetry. 
 \end{thm}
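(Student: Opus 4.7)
The plan is to establish both directions of Brunn's characterization. The easy direction is immediate: every affine section of an ellipsoid is either empty, a singleton, or itself an ellipsoid, and in each case admits a point of symmetry at its center. So I focus on the converse.

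For the converse, the strategy is to reduce to the special case $p = q-1$ (every hyperplane section is centrally symmetric) by backward induction on the section dimension $p$. The base of the backward induction, $p = q-1$, is the classical \emph{false-center theorem} of Brunn, which I would treat as the substantive ingredient (see below). For the inductive step, suppose the result is already established in $\R^q$ for section dimension $p+1$, and assume that every $p$-section of $L \subset \R^q$ has a point of symmetry. Given any $(p+1)$-dimensional affine subspace $F \subset \R^q$, the convex body $L \cap F$ lives in $F \cong \R^{p+1}$; every $p$-dimensional section of $L \cap F$ is also a $p$-section of $L$ and hence is centrally symmetric by hypothesis. Since $p = (p+1)-1$ and $p+1 \geq 3$, the case $p = q-1$ of the theorem, now applied in the ambient space $\R^{p+1}$, yields that $L \cap F$ is an ellipsoid, and in particular centrally symmetric. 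Thus every $(p+1)$-section of $L$ is centrally symmetric, and the inductive hypothesis concludes that $L$ is an ellipsoid.

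All that remains is the classical false-center case $p = q-1$: if every hyperplane section of a convex body $L \subset \R^q$ ($q \geq 3$) has a point of symmetry, then $L$ is an ellipsoid. I would invoke this as a known theorem from Brunn's 1889 Habilitation, for which numerous modern proofs exist in the literature. A self-contained proof sketch proceeds by defining the center map $H \mapsto c(H) \in \R^q$ on the open set of hyperplanes meeting $\mathrm{int}(L)$, verifying its continuity, and showing that along every parallel pencil $H_t = H_0 + t u$ the centers $c(H_t)$ depend affinely on $t$; this uses the central symmetry of each $L \cap H_t$ together with Brunn's concavity principle applied to the one-dimensional sections of $L$ parallel to $u$. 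The affine dependence along parallel pencils forces $L$ to possess a global point of symmetry, after which an analysis of the two-dimensional central sections yields the quadratic structure that identifies $L$ as an ellipsoid.

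The main obstacle is plainly the hyperplane case $p = q-1$ itself: the backward induction on $p$ is a routine consequence of that case, whereas the classical false-center theorem carries the entire substantive content. Because the statement is attributed in the paper to Brunn and is well documented in the convex geometry literature, I would elect to cite it rather than reprove it, and present the reduction described above as the only computation required locally.
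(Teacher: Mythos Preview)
Your proposal is correct and follows essentially the same approach as the paper: both reduce to the hyperplane case $p=q-1$ by reverse induction on $p$ (the paper says only that this is ``clear''; you spell out the step), and both then cite the classical $p=q-1$ result rather than reprove it. One minor remark: the name ``false-center theorem'' usually refers to the stronger Aitchison--Petty--Rogers result (sections through a fixed non-center point suffice), not to Brunn's original statement; and your heuristic sketch of the $p=q-1$ case is a bit optimistic---the affine dependence of section-centers along parallel pencils does not follow from Brunn's concavity principle alone---but since you ultimately cite the result this is harmless.
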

 \begin{rem} \label{rem:Brunn}
 The case when $L$ is a regular convex body in $\R^3$ and $p=2$ is due to Brunn \cite[Chapter IV]{Brunn-Habilitation} (see \cite[Section 4]{Soltan-EllipsoidsSurvey}). In $\R^q$ for general $q \geq 3$, note that it is enough to establish the theorem for $p=2$. Also note that by reverse induction on $p$ and the fact that an ellipsoid has a point of symmetry, it is also enough to establish the case $p=q-1$. 
 According to \cite[Theorem 4.3]{Soltan-EllipsoidsSurvey}, it was shown by Olovjanishnikov \cite{Olovjanishnikov} that it is enough to restrict to hyperplane sections which divide the volume of $L$ in a given ratio $\lambda \neq 1$. A far reaching generalization was obtained by Aitchison--Petty--Rogers \cite[Theorem 2]{AitchisonPettyRogers}, who showed that it is enough to consider all two-dimensional sections which pass through a fixed point $x_0$ in the interior of $L$ which is not a point of symmetry of $L$, if it has one. An alternative proof of the latter characterization was obtained by Burton--Mani \cite{BurtonMani}. 
We refer to the surveys \cite{Soltan-EllipsoidsSurvey, Petty-EllipsoidsSurvey,GruberHobinger} for additional extensions and characterizations of ellipsoids. 
 \end{rem}

Fix $E \in G_{n,k-1}$. Recalling (\ref{eq:section}) and that every $L_{E,u,s}(K)$ has a point of symmetry for all $u \in \S(E^{\perp})$ and $s \in \R$, it
 follows when $\dim E^{\perp} = n-k+1 \geq 3$, i.e. $k \leq n-2$, that $L_E(K)$ is necessarily an ellipsoid in $E^{\perp}$. We proceed assuming this is the case, and defer treating the case $k = n-1$ to the next section. 

\medskip

Note that we are still far from concluding that $K$ is an ellipsoid even in the case $k=1$ (when $E = \{0\}$ and $L_E(K) = (K-K)^{\circ}$), since we have only shown that $K-K$ is an ellipsoid, which does not mean that $K$ itself is an ellipsoid (but rather an affine image of a convex body of constant width). 
 
\subsection{Step 3 - distinguished orthonormal basis}
 
By Lemma \ref{lem:wedge}, for any $A \in \K(\R^n)$ and invertible linear map $T$
which acts invariantly on $E$: 
\[
L_E(T(A)) = \{ x \in E^{\perp} \; ; \; |P_{E \wedge x} T(A)| \leq 1 \} = \{ x \in E^{\perp} \; ; \; |P_{E \wedge T^*(x)} A| \leq 1\} = 
T^{-*}( L_E(A) ).
\]
 
 Since $L_E(K)$ is an (origin-symmetric) ellipsoid in $E^{\perp}$, we may find a positive-definite linear map $T_E$ on $\R^n$ so that $T_E$ acts as the identity on $E$, and on $E^{\perp}$ it maps the Euclidean ball $B_{E^{\perp}}$ onto $L_E(K)$. Denoting:
 \[
 K_E := T_E K ,
 \]
it follows that:
\begin{equation} \label{eq:T1}
 L_E(K_E) = T^{-*}_E(L_E(K)) = B_{E^{\perp}} . 
\end{equation}
 
Let $\{u_i\}_{i=1,\ldots,n-k+1}$ denote an orthonormal basis of $E^{\perp}$ consisting of eigenvectors of $T_E$. As $T_E$ acts diagonally in this basis, observe that the actions of  $S_{u_i}$ and $T_E$ commute. Hence:
\begin{equation} \label{eq:T2}
L_E(S_{u_i} K_E) = L_E(S_{u_i} T_E(K)) = L_E(T_E(S_{u_i} K)) = T^{-*}_E(L_E(S_{u_i} K)) . 
\end{equation}

\medskip

Now recall by (\ref{eq:translation}) and  (\ref{eq:section}) that:
\[
\forall s \in \R \;\; \exists \alpha_{E,u_i,s} \in u_i^{\perp} \;\;  (L_E(K) - su_i) \cap u_i^{\perp} = (L_E(S_{u_i} K) - su_i) \cap u_i^{\perp} + \alpha_{E,u_i,s} .
\]
Applying $T^{-*}_E$ to the last identity, using that it acts invariantly on $\sspan(u_i)$ and $u_i^{\perp}$, and recalling (\ref{eq:T1}) and (\ref{eq:T2}), we deduce:
\[
(B_{E^{\perp}} - su_i) \cap u_i^{\perp} = (L_E(S_{u_i} K_E) - su_i) \cap u^{\perp}_i  + T^{-*}_E(\alpha_{E,u_i,s}) \;\;\; 
\forall s \in \R \;\;\; \forall i=1,\ldots, n-k+1 .
\]
Since $(L_E(S_u K) - su) \cap u^{\perp}$ is origin-symmetric in $E^{\perp} \cap u^{\perp}$, and this does not change under a linear transformation, we know that $(L_E(S_{u_i} K_E) - su_i) \cap u^{\perp}_i$ is also origin-symmetric in $E^{\perp} \cap u^{\perp}_i$ for all $s \in \R$. Since $(B_{E^{\perp}} - su_i) \cap u_i^{\perp}$ is origin-symmetric as well, we deduce that $T^{-*}_E(\alpha_{E,u_i,s}) = 0$ necessarily. It follows that:
\begin{equation} \label{eq:key}
L_E(S_{u_i} K_E) = L_E(K_E) = B_{E^{\perp}} \;\;\; \forall i=1,\ldots, n-k+1 . 
\end{equation}

\subsection{Step 4 - invariance under reflections}

Recall the notation $K^w := (K - w) \cap E^{\perp}$ for $w \in P_E K$. 

\begin{lemma}
Let $K$ be a convex body in $\R^n$, let $E \in G_{n,k-1}$ ($k=1,\ldots,n-1$), and assume that $L_E(S_u K)= L_E(K)$ for some $u \in E^{\perp}$. Then for every $w \in \interior P_E K$, up to translation in the direction of $u$, it holds that $S_u K^w = K^w$, i.e. $K^w$ is invariant under $R_u$, the reflection about $u^{\perp}$. 
\end{lemma}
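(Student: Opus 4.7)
My plan is to convert the hypothesis into a per-fiber statement via Fubini, then exploit the shadow system $(K^w)_u(t) \subset E^\perp$ to extract a sharp identity for the support function of $K^w$, from which the midpoint function of the $u$-fibers of $K^w$ will be forced to be constant.

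First, $L_E(S_u K) = L_E(K)$ is equivalent to $|P_{E \wedge x} K| = |P_{E \wedge x} S_u K|$ for every $x \in E^\perp$, since both sides are nonnegative, even and positively $1$-homogeneous on $E^\perp$, and hence determined by their unit level sets. The Fubini formula \eqref{eq:Fubini}, together with the identity $(S_u K)^w = S_u K^w$ (valid because $w \in E \subset u^\perp$), turns this into
\[
\int_E \bigl[h_{K^w}(x)+h_{K^w}(-x) - h_{S_u K^w}(x) - h_{S_u K^w}(-x)\bigr]\,dw = 0 \qquad \forall\, x \in E^\perp.
\]
The integrand is pointwise nonnegative, because Steiner symmetrization within $E^\perp$ in direction $u$ does not increase the one-dimensional projection of $K^w$ onto any line of $E^\perp$ (a standard fact, proved by parametrizing $K^w$ along its $u$-fibers and applying the max-$\geq$-average splitting already used in Section~\ref{sec:convexity}). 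Hence the per-fiber equality $h_{K^w}(x)+h_{K^w}(-x) = h_{S_u K^w}(x)+h_{S_u K^w}(-x)$ holds for every $x \in E^\perp$ and every $w$ in the relative interior of $P_E K$ (for a.e.\ such $w$ from the integration, then for all by continuity of $w \mapsto h_{K^w}$).

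I would next upgrade this via the shadow system $\{(K^w)_u(t)\}_{t \in [-1,1]} \subset E^\perp$, with $(K^w)_u(0) = S_u K^w$ and $(K^w)_u(\pm 1) \in \{K^w, R_u K^w\}$. By Proposition~\ref{prop:key} applied in $E^\perp$, the map $g_v(t) := h_{(K^w)_u(t)}(v)$ is convex in $t$; by construction $g_v(-t) = g_{R_u v}(t)$. The combination
\[
G(t) := g_v(t) + g_{-v}(t) + g_{R_u v}(t) + g_{-R_u v}(t)
\]
is convex and even in $t$, equals $2(h_{S_u K^w}(v)+h_{S_u K^w}(-v))$ at $t=0$ (by $R_u$-symmetry of $S_u K^w$), and equals the same value at $t=1$ by the per-fiber equality applied to both $v$ and $R_u v$. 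Thus $G$ is constant on $[-1,1]$; decomposing $G$ as the sum of the two convex even functions $g_v(t)+g_v(-t)$ and $g_{-v}(t)+g_{-v}(-t)$, each minimized at $t=0$, forces each summand to be constant, yielding the pointwise identity
\[
h_{K^w}(v) + h_{K^w}(R_u v) = 2\, h_{S_u K^w}(v) \qquad \forall\, v \in E^\perp.
\]

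Finally I would derive constancy of the midpoint function $m(y'') := (a^+(y'')+a^-(y''))/2$ of the $u$-fibers $[a^-(y''), a^+(y'')]$ of $K^w$ over $\pi K^w \subset u^\perp \cap E^\perp$. For $v = y_0 + su$ with $s > 0$, the identity above becomes
\[
\max_{y''}\bigl[\phi_+(y'') + s m(y'')\bigr] + \max_{y''}\bigl[\phi_+(y'') - s m(y'')\bigr] = 2 \max_{y''} \phi_+(y''),
\]
where $\phi_+(y'') := \scalar{y_0, y''} + s c(y'')/2$ and $c := a^+ - a^-$. The standard lower bound $\geq$ (obtained by evaluating at the argmax $y^*$ of $\phi_+$) becomes equality only if $y^*$ is also the argmax of the two perturbed functionals $\phi_+ \pm s m$. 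Since $c$ is concave (difference of the concave $a^+$ and the convex $a^-$), every interior point of $\pi K^w$ at which $c$ is differentiable is realized as such a $y^*$ by choosing $y_0 = -(s/2)\nabla c(y^*)$; writing the three interior first-order optimality conditions at $y^*$ then forces $\nabla m(y^*) = 0$. Hence $\nabla m \equiv 0$ almost everywhere on the interior of $\pi K^w$, and by connectedness of $\pi K^w$ and continuity of $m$ the midpoint function is a constant $m_0$ on $\pi K^w$. This is precisely the assertion that $K^w = S_u K^w + m_0 u$, i.e.\ that $K^w$ is invariant under $R_u$ up to translation in direction $u$. The main obstacle lies in this last step, where the equality in the max-type identity has to be translated into the pointwise vanishing of $\nabla m$ and extended to all of $\pi K^w$ by a density/continuity argument.
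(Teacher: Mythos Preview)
Your argument is correct, and up through the identity $h_{K^w}(v)+h_{K^w}(R_u v)=2h_{S_u K^w}(v)$ it is essentially the same as the paper's, only dressed up slightly differently: the paper obtains this identity by applying the pointwise inequality $h_{S_u K^w}\le \tfrac12(h_{K^w}+h_{R_u K^w})$ to both $\xi=\theta$ and $\xi=R_u\theta$ and summing (which forces equality for each $w$), rather than via the shadow-system convexity argument you use; both routes are short and yield the same conclusion.

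The genuine divergence is in the final step. You translate the support-function identity into a max-type equation for the midpoint function $m$ and argue, via first-order conditions at carefully chosen interior maximizers, that $\nabla m=0$ a.e.\ and hence $m$ is constant. This works, but it is considerably more delicate than necessary. The paper's finish is a one-liner: the identity $2h_{S_u K^w}=h_{K^w}+h_{R_u K^w}$ says exactly that $S_u K^w=\tfrac12(K^w+R_u K^w)$ as convex bodies; since $|S_u K^w|=|K^w|=|R_u K^w|$, the equality case of the Brunn--Minkowski inequality forces $K^w$ and $R_u K^w$ to be translates, and the translation must be along $u$ because their projections onto $u^\perp$ coincide. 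Your route avoids Brunn--Minkowski at the cost of a hands-on differentiability/density argument; the paper's route is cleaner and more robust.
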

\begin{proof}
Given $x \in E^{\perp}$, recall from (\ref{eq:Fubini}) that:
\[
\norm{x}_{L_E(K)} = |P_{E \wedge x} K| = \int_{P_E K} |P_{x} K^w| dw = \int_{P_E K} (h_{K^w}(x) + h_{K^w}(-x)) dw.
\]
We are given that $L_E(S_u K) = L_E(K)$, 
and since $(S_u K)^w = S_u K^w$ for all $w \in P_E K$ (as $u \in E^{\perp}$), we deduce that:
\[
\int_{P_E K} (h_{K^w}(x) + h_{K^w}(-x)) dw = \int_{P_E K} (h_{S_u K^w}(x) + h_{S_u K^w}(-x)) dw . 
\]

Note that $S_u K^w \subseteq \frac{1}{2} (K^w + R_u K^w)$, and hence: 
\begin{equation} \label{eq:1inq}
h_{S_u K^w} \leq \frac{1}{2} (h_{K^w} + h_{R_u K^w}) . 
\end{equation}
Since $h_{R_u K^w}(\xi) = h_{K^w}(R_u \xi)$, it follows that:
\[
\int_{P_E K} (h_{K^w}(\xi) + h_{K^w}(-\xi)) dw \leq \frac{1}{2} \int_{P_E K} (h_{K^w}(\xi) + h_{K^w}(R_u \xi) + h_{K^w}(-\xi) + h_{K^w}(-R_u \xi)) dw.
\]
Applying this to $\xi = \theta$ and $\xi = R_u \theta$ for a given $\theta \in E^{\perp}$, and summing, we obtain:
\begin{align}
\nonumber & \int_{P_E K} (h_{K^w}(\theta) + h_{K^w}(-\theta) + h_{K^w}(R_u \theta) + h_{K^w}(-R_u \theta))  dw  \\
\label{eq:4inqs} & \leq \int_{P_E K} (h_{K^w}(\theta) + h_{K^w}(R_u\theta) + h_{K^w}(-\theta) + h_{K^w}(-R_u \theta)) dw
\end{align}
Since both sides are equal, this means that we must have equality for a.e.~$w \in P_E K$ (and hence, by continuity of the corresponding functions on their support, for all $w \in P_E K$), in
the 4 instances of the inequality (\ref{eq:1inq}) we used in the direction of $\xi \in \{ \theta, -\theta, R_u \theta , - R_u \theta \}$ to derive (\ref{eq:4inqs}). We therefore deduce for all $w \in P_E K$:
\[
h_{S_u K^w}(\xi) = \frac{1}{2} (h_{K^w}(\xi) + h_{R_u K^w}(\xi)) \;\;\; \forall \xi \in \{ \theta, -\theta, R_u \theta , - R_u \theta \} . 
\]
Since $\theta$ was arbitrary, it follows that for all $w \in P_E K$:
\[
S_u K^w = \frac{1}{2} (K^w + R_u K^w) .
\]
But by the Brunn--Minkowski inequality:
\[
|K^w| = |S_u K^w| \geq |K^w|^{\frac{1}{2}} |R_u K^w|^{\frac{1}{2}} = |K^w| ,
\]
and the equality case implies that $R_u K^w$ and $K^w$ are translates whenever $\interior K^w \neq \emptyset$, and in particular, whenever $w \in \interior P_E K$. Since there cannot be any translation perpendicular to $u$, the proof is concluded. \end{proof}

Fix $w \in \interior P_E K$. The lemma and (\ref{eq:key}) imply that up to translating in the direction of $u_i$, we have $R_{u_i} K_E^w = K_E^w$. Since the $u_i$'s are all orthogonal, it follows that there is a single translation of $K_E^w$ so that $R_{u_i} K_E^w = K_E^w$ for all $i=1,\ldots,n-k+1$.  Since the composition of all $R_{u_i}$'s is precisely $ -\text{Id}$ on $E^{\perp}$, we deduce that $K_E^w$ has a point of symmetry. Recalling that $K_E = T_E(K)$ and that $T_E$ acts as the identity on $E$, it follows that $K^w$ has a point of symmetry. 
 
\subsection{Step 5 - concluding when $1 \leq k \leq n-2$}

We have shown that for every $E \in G_{n,k-1}$, every section $K \cap (w + E^{\perp}) = w + K^w$ of $K$ through its interior has a point of symmetry. It follows by Brunn's Theorem \ref{thm:Brunn} that whenever $n \geq 3$ and $\dim E^{\perp} = n-k+1 \geq 2$, i.e. $k \leq n-1$, $K$ must be an ellipsoid. 

\medskip

All in all, this establishes Theorem \ref{thm:equality} when $1 \leq k \leq n-2$ (and hence $n \geq 3$). 
The case when $k=n-1$ will be handled in the next section.

\section{Analysis of equality when $k=n-1$} \label{sec:equality2}

To establish the case $k=n-1$ of Theorem \ref{thm:equality}, we cannot invoke Brunn's Theorem \ref{thm:Brunn} in Step 2 of the previous section, since $\dim E^{\perp} = 2$ for $E \in G_{n,k-1}$. 
In this section we describe a more complicated argument for bypassing Step 2 when $k=n-1$. 

\subsection{Linear boundary segments}

We will need the following two-dimensional observation (compare with \cite[Lemma 8]{MeyerReisner-SantaloViaShadowSystems}, which is insufficient for our purposes). Recall our notation:
\[
L(K) = L_{\{0\}}(K) = (K-K)^{\circ} . 
\]

\begin{proposition} \label{prop:segments}
Let $\{K(t)\}_{t \in \R}$ denote a shadow system of convex bodies in $\R^2$ in the direction of $e_2$. Given two non-empty open intervals $S,T \subset \R$, assume that there exist functions $a, \Psi : S \rightarrow \R$ so that $(a(s) + \Psi(s) t , s) \in \partial L(K(t))$ for all $s \in S$ and $t \in T$. Then there exist $c_+,c_- \in \R$ so that $\Psi(s) = c_+ s_+ - c_- s_-$ for all $s \in S$. 
\end{proposition}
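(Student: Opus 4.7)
The strategy is to lift the planar problem to $\R^3$ via the shadow-system structure, reformulate the hypothesis as a $2$-parameter surface on the boundary of a $3$D centrally symmetric convex body, and extract an ODE for $\Psi$ via a maximum-principle argument applied to the dual supporting point.

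First I would write $K(t) = T_t(\tilde K)$ for some convex compact $\tilde K \subset \R^3$, with $T_t(x_1, x_2, x_3) = (x_1, x_2 - t x_3)$, and set $M := \tilde K - \tilde K$. Linearity gives $K(t) - K(t) = T_t(M)$, hence $L(K(t)) = (T_t(M))^\circ = (T_t^*)^{-1}(M^\circ)$ with $T_t^*(\xi_1, \xi_2) = (\xi_1, \xi_2, -t \xi_2)$. The hypothesis is thereby equivalent to
\[
\eta(s, t) := (a(s) + \Psi(s) t,\, s,\, -s t) \in \partial M^\circ \qquad \forall (s, t) \in S \times T.
\]
Since the right boundary of the planar convex body $L(K(t))$ at height $s$ is a concave function of $s$, identifying $a(s) + \Psi(s) t$ with this boundary (consistent side choice by continuity) and subtracting the hypothesis at two distinct values of $t$ forces $\Psi$ to be a difference of concave functions; hence $\Psi$ (and $a$) is locally Lipschitz on $S$ and differentiable almost everywhere. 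Moreover, for each fixed $s$, the set $\{\eta(s, t) : t \in T\}$ is a line segment in $\partial M^\circ$ with direction $(\Psi(s), 0, -s)$; since $h_M \equiv 1$ on this segment, it lies in a single flat face of $\partial M^\circ$, so there exists $q(s) \in M$ with $\langle q(s), \eta(s, t)\rangle = 1$ for every $t \in T$, which on separating coefficients of $t$ yields
\[
q_1(s) a(s) + q_2(s) s = 1, \qquad q_1(s) \Psi(s) = q_3(s) s.
\]

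The heart of the argument is a maximum-principle calculation: since $q(s_1) \in M$ and $\eta(s_2, t) \in M^\circ$, one has $\langle q(s_1), \eta(s_2, t)\rangle \le 1$ with equality at $s_2 = s_1$. Differentiating in $s_2$ at any point of differentiability of $a, \Psi$ gives an expression linear in $t$ that must vanish on the open interval $T$, which forces $q_1(s_1) \Psi'(s_1) = q_3(s_1)$. Combined with the preceding display, and with $q_1(s_1) \ne 0$ (verified by noting that $q_1(s_1) = 0$ together with the vanishing conditions forces $q(s_1) = 0$, contradicting $\langle q(s_1), \eta(s_1, t)\rangle = 1$), this produces
\[
\Psi'(s) = \frac{q_3(s)}{q_1(s)} = \frac{\Psi(s)}{s} \qquad \text{a.e. on } S \setminus \{0\},
\]
equivalently $(\Psi(s)/s)' = 0$ a.e. Integrating and using absolute continuity of $\Psi$, the ratio $\Psi(s)/s$ is constant on each connected component of $S \setminus \{0\}$. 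Calling these constants $c_+$ and $c_-$, and using continuity at $0$ to force $\Psi(0) = 0$ if $0 \in S$, yields the claim $\Psi(s) = c_+ s_+ - c_- s_-$.

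The main obstacle I anticipate is the technical verification of the convex-analytic steps when $\partial M^\circ$ is non-smooth: ensuring that a measurable selection $s \mapsto q(s)$ exists with $q_1(s) \ne 0$ at almost every $s$, and that the maximum-principle calculation survives possible non-uniqueness of $q(s)$. A secondary subtlety is the consistent side choice on $\partial L(K(t))$ so that the concavity of $a + t\Psi$ holds globally on $S$ rather than only locally.
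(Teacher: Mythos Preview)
Your proof is correct and reaches the same ODE $s\Psi'(s)=\Psi(s)$ as the paper, but by a genuinely different mechanism. Both arguments begin with the identical lift $\eta(s,t)=(a(s)+\Psi(s)t,\,s,\,-st)\in\partial L(\tilde K)=\partial M^\circ$. From there the paper proceeds \emph{differential-geometrically}: it computes the second fundamental form of the surface $(s,t)\mapsto\eta(s,t)$, observes that $\partial_t^2\eta\equiv0$ forces $\det\II=-\langle\partial_s\partial_t\eta,N/|N|\rangle^2\le 0$, and concludes from convexity that the cross term $\langle\partial_s\partial_t\eta,N\rangle=s\Psi'(s)-\Psi(s)$ must vanish a.e. You instead proceed \emph{dually}: fixing $s$, the line segment $t\mapsto\eta(s,t)$ lies in a single exposed face of $M^\circ$, so there is a contact point $q(s)\in M$ with $\langle q(s),\eta(s,\cdot)\rangle\equiv1$; then the first-order optimality condition for $s_2\mapsto\langle q(s_1),\eta(s_2,t)\rangle$ at $s_2=s_1$, being affine in $t\in T$, forces the same relation $q_1\Psi'=q_3$ and hence $\Psi'=\Psi/s$.

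What each approach buys: the paper's curvature argument requires Alexandrov's twice-differentiability theorem, whereas your maximum-principle argument needs only first-order (Rademacher) differentiability of $a,\Psi$, which is a mild simplification. On the other hand, the paper's argument is pointwise on the surface and sidesteps the selection issue you flag, while your approach must choose $q(s)$ from a possibly non-singleton face; this is harmless because any choice of $q(s)$ works at each $s$ independently (no measurability of $s\mapsto q(s)$ is actually needed, since the conclusion $\Psi'(s)=\Psi(s)/s$ is derived separately at each differentiability point). Your verification that $q_1(s)\ne0$ is clean: $q_1=0$ plus the two derivative relations forces $q_2=q_3=0$, contradicting $\langle q,\eta\rangle=1$. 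The remaining regularity step---that $a,\Psi$ are locally Lipschitz---is handled identically in spirit by both proofs via the convexity of the boundary, and your difference-of-concave argument is adequate once one fixes the side of $\partial L(K(t))$ consistently (which can be done by continuity in $t$ for fixed $s$, since $t\mapsto a(s)+\Psi(s)t$ is affine).
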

\begin{proof}
By definition, there exists $\tilde K \in \K(\R^3)$ so that $K(t) = T^{e_2}_t(\tilde K)$ where $T^{e_2}_t : \R^3 \rightarrow \R^2$ is a projection onto $\R^2$ parallel to $e_3 + t e_2$. As in the proof of Proposition \ref{prop:key}, we have:
\begin{align*}
\norm{(y,s)}_{L(K(t))} &= h_{K(t)}(y,s) + h_{K(t)}(-y,-s) \\
& = h_{\tilde K}(y,s,-st) + h_{\tilde K}(-y,-s,st) = \norm{(y,s,-st)}_{L(\tilde K)} . 
\end{align*}
Our assumption then yields the following local parametrization of the surface $\partial L(\tilde K)$:
\[
F(s,t) := (a(s) + \Psi(s) t , s , -st) \in \partial L(\tilde K) \;\;\; \forall s \in S \;\; \forall t \in T . 
\]

The convexity of $L(\tilde K)$ implies that its boundary may locally be represented by a convex function $f$, which is therefore Lipschitz and hence differentiable almost-everywhere by Rademacher's theorem. Moreover, by Alexandrov's theorem (e.g. \cite[Chapter 2]{Gruber-ConvexAndDiscreteGeometry}), $f$ is twice-differentiable (in Alexandrov's sense) almost-everywhere. At points of first differentiability, two linearly independent tangent vectors to the boundary are given by:
\[
\partial_s F(s,t) = (a'(s) + \Psi'(s) t , 1 , -t )  ~,~ \partial_t F(s,t) = (\Psi(s) , 0 , -s) ,
\]
and so the normal to the boundary is in the direction:
\[
 N := (s , -s a'(s) - s \Psi'(s) t + t \Psi(s) , \Psi(s)) .
\]
At points of second differentiability, the surface has a second-order Taylor expansion governed by the second fundamental form:
\[
\II :=  \scalar{\partial_s^2 F , N/|N|} ds^2 + 2 \scalar{\partial_s \partial_t F , N/|N|} dt ds + \scalar{\partial_t^2 F , N/|N|} dt^2 .
\]
Since $\partial^2_t F \equiv 0$ and $\scalar{\partial_s \partial_t F , N} = s \Psi'(s) - \Psi(s)$, we see that unless that latter term vanishes, $\II$ will have strictly negative determinant, implying that the surface has a saddle at that point, contradicting convexity.

We now claim that the only (locally) Lipschitz function $\Psi$ which solves $s \Psi'(s) - \Psi(s) = 0$ for almost every $s \in S$ is of the form $\Psi(s) = c_+ s_+ - c_- s_-$. Indeed, denote $S_+$ and $S_- $ the open subsets of $S$ where (the continuous) $\Psi$ is positive and negative, respectively.  On $S_+$ we have $(\log \Psi)'(s) = (\log s)'$ and so by (local) absolute continuity of $\log \Psi$ we deduce that $\Psi(s) = c_{i} s$ ($c_{i} \neq 0$) on each connected component $S_{+,i}$ of $S_+$; since $\Psi$ must vanish at the end-points of each connected component which lie in $S$, this implies that there is at most one connected component in each of $S \cap \R_+$ and $S \cap \R_-$, and that its end-point in $S$ must be at $s=0$. An analogous statement holds on $S_-$. This implies that $\Psi$ must be of the asserted form. 
\end{proof}

\subsection{Step 1 - segments of constant projections of $K$}

Fix $E \in G_{n,k-1}$ and $u \in \S(E^{\perp})$. The argument of Step 1 from the previous section gives us a little more information than was stated there.
Given $s \in \R$, recall the definition of $f^{(s)}$ from the proof of Theorem \ref{thm:Steiner}:
\[
(E^{\perp} \cap u^{\perp}) \times \R \ni (y,t) \mapsto f^{(s)}(y,t) := |P_{E \wedge (y + su)} K_u(t)| .
\]
We know that $f^{(s)}$ is convex and even in $(y,t)$, and hence its level set:
 \[
\tilde L_{E,u,s} := \{(y,t)  \in (E^{\perp} \cap u^{\perp}) \times \R \; ; \; f^{(s)}(y,t) \leq 1\} 
\]
is convex and origin-symmetric. Note that $\tilde L_{E,u,s}(t) = L_{E,u,s}(K_u(t))$, where we denote by $A(t)$ the $t$-section of $A$. 
By Brunn's concavity principle as in the proof of Theorem~\ref{thm:all-t}, it follows that $\R \ni t \mapsto |\tilde L_{E,u,s}(t)|^{\frac{1}{n-k}}$ is even and concave on its support.

If $\Phi_k(K) = \Phi_k(S_u K)$, we know that $|\tilde L_{E,u,s}(1)| = |\tilde L_{E,u,s}(-1)| = |\tilde L_{E,u,s}(0)|$, and so $[-1,1] \ni t \mapsto |\tilde L_{E,u,s}(t)|$ must be constant. Recall from the proof of Proposition \ref{prop:yes} that $\Sigma_E$ denotes the 
(non-empty) interior of the compact interval $P_{\sspan(u)} L_E(K)$, which we identify with an open interval in $\R$ (via $s u \leftrightarrow s$).
By the equality case of the Brunn-Minkowski inequality, we deduce for all  $s \in \Sigma_E$ that $\tilde L_{E,u,s} \cap \{ t \in [-1,1]\}$ must be a tilted cylinder over the origin-symmetric base $\tilde L_{E,u,s}(0) = L_{E,u,s}(S_u K) \subset E^{\perp} \cap u^{\perp}$. In other words, for all $s \in \Sigma_E$:
 \[
L_{E,u,s}(K_u(t)) = L_{E,u,s}(S_u K) + \alpha_{E,u,s} t \;\;\; \forall t \in [-1,1] .
\]
The same argument as in the proof of Proposition \ref{prop:yes} verifies that this extends to all $s \in \R$, thereby generalizing (\ref{eq:translation}). 
 
Now, denote for $R > 0$:
 \[
\tilde L_{E,u,s,R} := \{(y,t)  \in (E^{\perp} \cap u^{\perp}) \times \R \; ; \; f^{(s)}(y,t) \leq R\} .
\]
By homogeneity of $f^{(s)}(y,t)$ in $(y,s)$ and a simple rescaling:
\[
\tilde L_{E,u,s,R}(t) = R \tilde L_{E,u,s/R,1}(t) \;\;\; \forall t , 
\]
and hence:
\[
\tilde L_{E,u,s,R}(t) = R L_{E,u,s/R}(S_u K) + R \alpha_{E,u,s/R} t \;\;\; \forall t \in [-1,1] .
\]
Using evenness, it follows that for every $y \in E^{\perp} \cap u^{\perp}$ so that $f^{(s)}(y,0) = R$, we have:
\[
 \text{$f^{(s)} \equiv R$ on both segments $\{ (\pm y + R \alpha_{E,u,s/R} t,t) \; ; \; t \in [-1,1]\}$}. 
\]
   
\subsection{Step 2 - segments of constant projections of $K^w$}
 
Recall that for every $w \in \interior P_E K$, $K^w = (K - w) \cap E^{\perp}$ has an interior in $E^{\perp}$ (and is therefore a convex body there). Since $\{(K^w)_u(t)\}_{t \in \R}$ are all convex bodies as well, this also ensures that $\{L((K^w)_u(t))\}_{t \in \R}$ are origin-symmetric convex bodies in $E^{\perp}$. Note that $(K^w)_u(t) = (K_u(t))^w$, and so we simply denote this by $K^w_u(t)$. 
For $w \in \interior P_E K$ and $s \in \R$, denote:
\[
(E^{\perp} \cap u^{\perp}) \times \R \ni (y,t) \mapsto f^{(s)}_w(y,t) := |P_{y + su} K^w_u(t)| ,
\]
so that:
\[
L(K^w_u(t)) = \{ y + s u \; ; \; f^{(s)}_w(y,t) \leq 1 \} . 
\]
Recall from (\ref{eq:Fubini}) that:
\begin{equation} \label{eq:fw-rep}
f^{(s)}(y,t) = \int_{P_E K} f^{(s)}_w(y,t) dw ,
\end{equation}
and that each $f^{(s)}_w$ is convex and even in $(y,t)$ by Proposition \ref{prop:key}. 

\medskip

Denote by $\Sigma_{w}(t)$ the (non-empty) interior of the compact interval $P_{\sspan(u)} L(K^w_u(t))$, which we identify with an open interval in $\R$ (via $s u \leftrightarrow s$). We claim that:
\[ \Sigma_{w} := \Sigma_w(0) =  \Sigma_{w}(t)  \;\;\; \forall t \in [-1,1] . 
\] Indeed, since the projection of the polar equals the polar of the section:
\[
P_{\sspan(u)} L(K^w_u(t)) = P_{\sspan(u)} (K^{w}_u(t) - K^{w}_u(t))^{\circ} =  ( (K^{w}_u(t) - K^{w}_u(t)) \cap \sspan(u) )^{\circ} . 
\]
But since $K^w_u(t) = \cup_{y \in P_{u^{\perp}} K^w} (y + (c_w(y) t + [-\ell_w(y),\ell_w(y)]) u)$ for all $t \in [-1,1]$, we have:
\[
(K^{w}_u(t) - K^{w}_u(t)) \cap \sspan(u) = \cup_{y \in P_{u^{\perp}} K^w}  [-2 \ell_w(y),2 \ell_w(y)] u ,
\]
which is independent of $t$. 

\medskip

Now fix $w_0 \in \interior P_E K$. Let $\pm y_s + s u \in \partial L(K^{w_0}_u(0))$ for $s \in \Sigma_{w_0}$, amounting to $f^{(s)}_{w_0}(\pm y_s,0) = 1$. Denote $R_{y_s,s} := f^{(s)}(\pm y_s,0)$. Since $P_E K \ni w \mapsto f^{(s)}_w(x)$ is continuous for every $x=(y,t)$, since $f^{(s)}_w$ are all convex, and since $f^{(s)} \equiv R_{y_s,s}$ on both segments $\{ (\pm y_s + R_{y_s,s} \alpha_{E,u,s/R_{y_s,s}} t , t) \; ; \; t \in [-1,1]\}$ by the previous step, it follows that each $f^{(s)}_w$  must be constant on these two segments as well, and in particular:
  \[
 \text{$f^{(s)}_{w_0} \equiv 1$ on both segments $\{ ( \pm y_s  + R_{y_s,s} \alpha_{E,u,s/R_{y_s,s}} t ,t) \; ; \; t \in [-1,1]\}$}. 
 \]
 By convexity of $f^{(s)}_{w_0}$,
  this implies that for all $s \in \Sigma_{w_0}$ and $\pm y_s + s u \in \partial L(K^{w_0}_u(0))$ we have:
 \[
 \pm y_s  + R_{y_s,s} \alpha_{E,u,s/R_{y_s,s}} t + s u \in \partial L(K^{w_0}_u(t)) \;\;\; \forall t \in [-1,1]  . 
 \]
 
\subsection{Step 3 - using $k=n-1$}

When $k=n-1$ we have $\dim E^{\perp} = 2$, and so the $K^w$'s are two-dimensional convex bodies for all $w \in \interior P_E K$. Given $A \subset E^{\perp}$, let us denote by $A(s)$ the one-dimensional chord $(A - su)\cap (E^{\perp} \cap u^{\perp})$, which we identify with a subset of $\R$. The discussion in Step 2 implies that for all $w \in \interior P_E K$: 
\begin{equation} \label{eq:representation}
L(K^w_u(t))(s) = [-a_w(s),a_w(s)] + \Psi_w(s) t \;\;\; \forall s \in \Sigma_w \;\;\; \forall t \in [-1,1] . 
\end{equation}
It follows by Proposition \ref{prop:segments} that $\Psi_w(s) = c^w_+ s_+ - c^w_- s_-$ for some $c^w_{\pm} \in \R$ and all $s \in \overline{\Sigma}_w = P_{\sspan(u)} L(K^w_u(t))$ (the claim on the open $\Sigma_w$ extends by continuity of the mid-point to the entire closure $\overline{\Sigma}_w$). But origin-symmetry of $L(K^w)$ and the representation (\ref{eq:representation}) for $t=1$ implies that $\Psi_w$ must be odd, and hence $c^w := c^w_+ = c^w_-$. We deduce that the mid-point of the chord of $L(K^w)$ perpendicular to $u$ at height $s$ is $c^w s$ for all values of $s$ for which the chord is non-empty, and hence all mid-points lie on a single line. This remains true for any $u \in \S(E^{\perp})$, since we assume equality $\Phi_k(K) = \Phi_k(S_u K)$ for all directions $u$. 

\smallskip

We can now invoke the following classical characterization of ellipsoids, due to Bertrand \cite{Bertrand-Ellipsoids} and to Brunn \cite[Chapter IV]{Brunn-Habilitation}; their original statement applied to the plane, but easily extends to $\R^n$. See the historical discussion in \cite[Section 8]{Soltan-EllipsoidsSurvey} and \cite[Theorem 2.12.1]{ConstantWidthBook} or \cite[Theorem 9.2.4]{GardnerGeometricTomography2ndEd} for a proof.
\begin{thm}[Bertrand--Brunn]
Let $K$ be a convex body in $\R^n$. Then $K$ is an ellipsoid if and only if for any direction $u$, the mid-points of all (one-dimensional) chords of $K$ parallel to $u$ lie in a hyperplane. 
\end{thm}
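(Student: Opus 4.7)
The ($\Rightarrow$) direction is immediate from affine invariance of the hypothesis: on the Euclidean unit ball, the midpoints of all chords parallel to $u$ lie on the equatorial hyperplane $u^{\perp}$.

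For ($\Leftarrow$), the plan is to package the hypothesis as a rich family of affine symmetries of $K$ and then pin them down via the John ellipsoid. For each $u \in \S^{n-1}$, let $H_u$ denote an affine hyperplane containing all midpoints of chords of $K$ parallel to $u$. A preliminary non-degeneracy step shows that $u \notin H_u^{\mathrm{dir}}$: writing $m(y)$ for the midpoint of the chord $K \cap (y + \sspan(u))$ with $y \in P_{u^{\perp}} K$, one has $P_{u^{\perp}} m(y) = y$, so the projected midpoint set is the full $(n-1)$-dimensional $P_{u^{\perp}} K$, which is incompatible with $P_{u^{\perp}} H_u$ having dimension only $n-2$. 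Hence $\R^n = H_u^{\mathrm{dir}} \oplus \sspan(u)$, and the chord-wise reflection $m(y) + tu \mapsto m(y) - tu$ extends uniquely to an affine involution $\rho_u$ of $\R^n$ with fixed set $H_u$ and $-1$-eigenspace $\sspan(u)$. By construction $\rho_u(K) = K$ and $|\det \rho_u| = 1$.

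Invoke next the L\"owner--John theorem: let $E$ be the unique maximum-volume inscribed ellipsoid of $K$. Since $\rho_u$ preserves both $K$ and Lebesgue measure, $\rho_u(E)$ is another maximum-volume inscribed ellipsoid, and uniqueness forces $\rho_u(E) = E$. In particular the center $c$ of $E$ lies in the fixed set $H_u$ of $\rho_u$ for every $u \in \S^{n-1}$. Applying the affine transformation that sends $c$ to the origin and $E$ to $B_2^n$ (the hypothesis is preserved by affine maps), each $H_u$ becomes a linear hyperplane and each $\rho_u$ becomes a linear involution preserving $B_2^n$, hence $\rho_u \in O(n)$. A linear involution in $O(n)$ with one-dimensional $-1$-eigenspace $\sspan(u)$ is necessarily the orthogonal reflection $R_u$ through $u^{\perp}$, so $H_u = u^{\perp}$ after normalization.

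The normalized $K$ is therefore invariant under every hyperplane reflection $R_u$, $u \in \S^{n-1}$; since such reflections generate $O(n)$, the normalized $K$ is $O(n)$-invariant, hence a centered Euclidean ball, and undoing the normalization shows the original $K$ is an ellipsoid. The only point requiring care is the non-degeneracy $u \notin H_u^{\mathrm{dir}}$ which makes $\rho_u$ a bona fide affine involution of $\R^n$ preserving $K$; the remainder is standard L\"owner--John uniqueness together with the generation of $O(n)$ by hyperplane reflections.
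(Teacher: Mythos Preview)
Your argument is correct. Note, however, that the paper does not actually prove this theorem: it is quoted as a classical result due to Bertrand and Brunn, with pointers to standard references for a modern proof. So there is no in-paper argument to compare against.

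Your route---extracting from the midpoint hypothesis a family of volume-preserving affine involutions $\rho_u$ of $K$, and then using uniqueness of the John ellipsoid to normalize all of them simultaneously to orthogonal reflections---is a clean and well-known device for ``many affine symmetries $\Rightarrow$ ellipsoid'' characterizations. The non-degeneracy step ($u \notin H_u^{\mathrm{dir}}$) is the only delicate point, and your projection argument handles it correctly. The original planar proofs of Bertrand and Brunn proceed by direct synthetic or analytic arguments special to $\R^2$; your John-ellipsoid approach has the advantage of working uniformly in all dimensions $n \geq 2$, which is precisely the generality in which the paper invokes the result.
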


We deduce from the Bertrand--Brunn Theorem that $L(K^w)$ must be an (origin-symmetric) ellipsoid for all $w \in \interior P_E K$. 

\subsection{Step 4 - concluding the proof}

For every $w \in \interior P_E K$ we know that $L(K^w) = T_w(B_{E^{\perp}})$ for some linear map $T_w : E^{\perp} \rightarrow E^{\perp}$ and that 
\[
L(K^w)(s) = L(S_u K^w)(s) + c_w s \;\;\; \forall s .
\]
We may therefore invoke the argument of Step 3 of the previous section to deduce that there exist two orthogonal directions $u_1,u_2 \in \S(E^{\perp})$ so that:
\[
L(S_{u_i} T_w(K^w)) = L(T_w(K^w)) = B_{E^{\perp}} \;\;\; i=1,2 . 
\]
Invoking the argument of Step 4 of the previous section, it follows that $T_w(K^w)$ is invariant (up to translation in the direction $u_i$) under reflection about $u_i^{\perp}$, and hence $T_w(K^w)$ and therefore $K^w$ have a point of symmetry for every $w \in \interior P_E K$. As this holds true for all $E \in G_{n,n-2}$, we deduce that every two-dimensional section of $K$ through its interior has a point of symmetry. It follows as in Step 5 of the previous section that if $n \geq 3$ then $K$ must be an ellipsoid by Brunn's Theorem \ref{thm:Brunn}.

When $n=2$ things are even simpler, since $E = \{0\}$ and so $\interior P_E K = \{0\}$ and $K = K^{w}$ for $w = 0$; we know that $T_0(K) = x_0 + C$ for some origin-symmetric convex body $C$, and that:
\[
(2 C)^{\circ} = (T_0(K) - T_0(K))^{\circ} = L(T_0(K)) = B_2^2,
\]
implying that $C = \frac{1}{2} B_2^2$ and hence that $K$ is an ellipsoid. This concludes the proof of Theorem \ref{thm:equality} when $k=n-1$.

\section{$L^p$-moment quermassintegrals and Alexandrov--Fenchel-type inequalities} \label{sec:AF}

\subsection{$L^p$-moment quermassintegrals}

\begin{defn} \label{def:Q}
Given $k=1,\ldots,n$ and $p \in \R$, denote the $L^p$-moment quermassintegrals of a convex body $K$ in $\R^n$ as:
\[
\Q_{k,p}(K) := \frac{|B_2^n|}{|B_2^k|} \brac{\int_{G_{n,k}} |P_F K|^{p} \sigma_{n,k}(dF)}^{1/p}  . 
\]
 The case $p=0$ is interpreted in the limiting sense as:
\[
\Q_{k,0}(K) = \frac{|B_2^n|}{|B_2^k|}  \exp \brac{ \int_{G_{n,k}} \log |P_F K| \sigma_{n,k}(dF) } . 
\]
\end{defn}

Note that when $p=-n$ we recover the affine quermassintegrals:
\[
\Q_{k,-n}(K) = \Phi_k(K) . 
\]
It is known (see \cite{Grinberg-Affine-Invariant} or Remark \ref{rem:optimal-p} below) that $p=-n$ is the unique value of $p \in \R$ for which $\Q_{k,p}(K)$ is invariant under volume-preserving affine transformations of $K$ (explaining why we prefer to use the notation $\Q_{k,p}$ instead of $\Phi_{k,p}$). 

When $p=1$, we obtain by Kubota's formula (\ref{eq:intro-W2}) the classical quermassintegrals $\Q_{k,1}(K) = W_k(K)$. 
The case $p=-1$ corresponds to the harmonic quermassintegrals $\Q_{k,-1}(K) = \hat W_k(K)$ defined by Hadwiger \cite[Subsection 6.4.8]{Hadwiger-Book} and further studied by Lutwak \cite{Lutwak-Isepiphanic,Lutwak-Harmonic}. 
We will present an interpretation of the case $p=0$ as an averaged version of the Loomis-Whitney inequality in the next subsection. 

\medskip

Having Theorem \ref{thm:main} at hand, we can easily deduce:
\begin{thm} \label{thm:Q}
For any convex body $K$ in $\R^n$, $k=1,\ldots,n-1$ and $p > -n$:
\begin{equation} \label{eq:Q-inq}
\Q_{k,p}(K) \geq \Q_{k,p}(B_K) ,
\end{equation}
with equality iff $K$ is a Euclidean ball. 
\end{thm}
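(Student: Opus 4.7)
The plan is to obtain Theorem \ref{thm:Q} as a direct consequence of Theorem \ref{thm:main} combined with Jensen's inequality, using the fact that the ball is the extremizer in both ingredients simultaneously. Concretely, since $\sigma$ is a probability measure on $G_{n,k}$, the map $p \mapsto \Q_{k,p}(K)$ is monotone non-decreasing in $p$, interpreting the $p=0$ case as a limit of geometric means. Thus for any $p > -n$, Jensen yields
\[
\Q_{k,p}(K) \;\geq\; \Q_{k,-n}(K) \;=\; \Phi_k(K).
\]

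The second step is simply to invoke Theorem \ref{thm:main}, which gives $\Phi_k(K) \geq \Phi_k(B_K)$. Finally, since every orthogonal projection of the Euclidean ball $B_K$ onto a $k$-dimensional subspace is a congruent $k$-dimensional ball, the function $G_{n,k} \ni F \mapsto |P_F B_K|$ is constant, and therefore $\Q_{k,p}(B_K) = \Phi_k(B_K)$ for every $p \in \R$. Chaining the inequalities gives
\[
\Q_{k,p}(K) \;\geq\; \Phi_k(K) \;\geq\; \Phi_k(B_K) \;=\; \Q_{k,p}(B_K),
\]
which is (\ref{eq:Q-inq}).

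For the equality case, suppose $\Q_{k,p}(K) = \Q_{k,p}(B_K)$. Then both inequalities above must be equalities. From $\Phi_k(K) = \Phi_k(B_K)$ and Theorem \ref{thm:main} (whose equality case was established in Sections \ref{sec:equality1}--\ref{sec:equality2}), we deduce that $K$ is an ellipsoid. From the equality case of Jensen's inequality applied in the first step (using that $F \mapsto |P_F K|$ is continuous on $G_{n,k}$), we deduce that $|P_F K|$ is independent of $F \in G_{n,k}$.

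The only expected subtlety — and certainly the least deep step — is the final implication that an ellipsoid all of whose $k$-dimensional projections have the same volume must be a Euclidean ball. This follows immediately by writing $K = T(B_2^n)$ for some invertible linear $T$, diagonalizing $T^{*}T$ in an orthonormal basis $\{e_1,\ldots,e_n\}$ with eigenvalues $a_1^2,\ldots,a_n^2$, and comparing the volumes of $P_F K$ for $F = \mathrm{span}(e_{i_1},\ldots,e_{i_k})$ as the index set varies: each such projection is an ellipsoid with semi-axes $\{a_{i_j}\}$, so constancy of its $k$-dimensional volume $|B_2^k|\prod_j a_{i_j}$ across all $\binom{n}{k}$ choices forces $a_1 = \cdots = a_n$, meaning $K$ is a Euclidean ball.
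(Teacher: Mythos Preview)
Your proof is correct and follows essentially the same approach as the paper: Jensen's inequality gives $\Q_{k,p}(K) \geq \Phi_k(K)$, Theorem~\ref{thm:main} gives $\Phi_k(K) \geq \Phi_k(B_K) = \Q_{k,p}(B_K)$, and the equality analysis combines the equality case of Theorem~\ref{thm:main} (yielding an ellipsoid) with the constancy of $F \mapsto |P_F K|$ from Jensen (forcing a ball). The only difference is that you spell out the final implication ``ellipsoid with constant $k$-projection volumes $\Rightarrow$ ball'' via the semi-axis argument, whereas the paper leaves this as an evident remark.
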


For $p=1$, this is the classical isoperimetric inequality for the quermassintegrals given by Theorem \ref{thm:FAF}. 
For $p=-1$, the above isoperimetric inequality for the harmonic quermassintegrals was obtained by Lutwak in \cite{Lutwak-Harmonic} (cf. \cite[p. 514]{Schneider-Book-2ndEd}). For $-n < p < -1$, this appears to be new. 

\begin{proof}[Proof of Theorem \ref{thm:Q}]
By Jensen's inequality and Theorem \ref{thm:main}:
\[
\Q_{k,p}(K) \geq \Q_{k,-n}(K)  = \Phi_k(K) \geq \Phi_k(B_k) = \Q_{k,-n}(B_K) = \Q_{k,p}(B_K) . 
\]
If $\Q_{k,p}(K) = \Q_{k,p}(B_K)$ then we have equality in both inequalities above. Equality in the second implies by Theorem \ref{thm:equality} that $K$ is an ellipsoid. Equality in the first (Jensen's inequality) implies that $G_{n,k} \ni F \mapsto|P_F K|$ is constant, and hence $K$ must be a Euclidean ball. 
\end{proof}

\begin{rem} \label{rem:optimal-p}
The value of $p=-n$ is precisely the sharp threshold below which the inequality (\ref{eq:Q-inq}) is no longer true, even for ellipsoids. Indeed, if $p < -n$ and $K$ is any ellipsoid which is not a Euclidean ball, then by Jensen's inequality (which yields a strict inequality since $G_{n,k} \ni E \mapsto |P_E K|$ is not constant) and affine invariance of $\Q_{k,-n}$:
\[
\Q_{k,p}(K) < \Q_{k,-n}(K) = \Q_{k,-n}(B_K) = \Q_{k,p}(B_K) . 
\]
\end{rem}

\subsection{Case $p=0$ - averaged Loomis--Whitney} \label{subsec:LW}

The classical Loomis--Whitney inequality (for sets) \cite{LoomisWhitney} asserts that if $K$ is a compact set in $\R^n$ then:
\begin{equation} \label{eq:LW}
\Pi_{i=1}^{n}|P_{e_i^{\perp}} K| \geq |K|^{n-1} ,
\end{equation}
with equality when $K$ is a box (i.e. a rectangular parallelepiped with facets parallel to the coordinate axes). From this, Loomis and Whitney deduce in \cite{LoomisWhitney} a non-sharp form of the isoperimetric inequality for the surface area $S(K)$ (for any reasonable definition of the latter):
\begin{equation} \label{eq:non-sharp-LW}
S(K) \geq 2 |K|^{\frac{n-1}{n}} . 
\end{equation}

Let $I_k := \{ I \subseteq \{1,\ldots,n\} \; ; \; |I|=k \}$, and given $I \in I_k$, denote $E_I = \sspan\{ e_i \; ; \; i \in I \}$. By reverse induction on $k$, it is easy to deduce the following extension of (\ref{eq:LW}) to any $k=1,\ldots,n-1$:
\[
\Pi_{I \in I_k} |P_{E_I} K| \geq |K|^{{n-1 \choose k-1} } ,
\]
with equality when $K$ is a box. In the class of convex bodies, this is also a necessary condition for equality. See \cite{Ball-Shadows,Zhang-AffineSobolevInq,BollobasThomason-Cubes} for further extensions.  

Of course, one can choose any other orthonormal basis in the Loomis--Whitney inequality instead of $\{e_1,\ldots,e_n\}$, and it is a natural question to ask whether a better inequality holds if we average over all possible orthonormal bases (as a cube is not invariant under rotations). Taking a geometric average gives a particularly pleasing result, since:
\begin{align*}
\int_{\SO(n)} \log  \Pi_{I \in I_k} |P_{U(E_I)} K| \sigma_{\SO(n)}(dU) & = {n \choose k} \int_{G_{n,k}} \log |P_E K|\sigma_{n,k}(dE) \\
& = {n \choose k}  \log  \brac{\frac{|B_2^k|}{|B_2^n|} \Q_{k,0}(K)} .
\end{align*}
Consequently, we have by Theorem \ref{thm:Q} (in fact, this already follows from Lutwak's confirmation in \cite{Lutwak-Harmonic} of the case $p=-1$), that for any convex body $K$ in $\R^n$ and $k=1,\ldots,n-1$:
\[ \exp \brac{\int_{\SO(n)} \log  \Pi_{I \in I_k} |P_{U(E_I)} K| \sigma_{\SO(n)}(dU) }\geq \brac{\frac{|B_2^k|}{|B_2^n|} \Q_{k,0}(B_K)}^{{n \choose k}} 
= |B_2^k|^{{n \choose k}} \brac{\frac{|K|}{|B_2^n|}}^{{n-1 \choose k-1} } ,
\] with equality if and only if $K$ is a Euclidean ball. This means that for the averaged Loomis--Whitney inequality, it is not the cube which is optimal but rather the Euclidean ball. Moreover, since by Jensen's inequality and Theorem \ref{thm:Q}:
\[
 \frac{1}{n} S(K) = W_{n-1}(K) = \Q_{n-1,1}(K) \geq \Q_{n-1,0}(K) \geq \Q_{n-1,0}(B_K) = |B_2^n| \brac{\frac{|K|}{|B_2^n|}}^{\frac{n-1}{n}} ,
\]
this averaged version implies the classical isoperimetric inequality (for convex bodies) with a \emph{sharp} constant, in contrast to the non-sharp (\ref{eq:non-sharp-LW}). 

\subsection{Alexandrov--Fenchel-type inequalities} \label{subsec:AF}

It was noted by Lutwak in \cite{Lutwak-Isepiphanic}, following Hadwiger \cite[Subsection 6.4.8, Satz XV]{Hadwiger-Book} for the case $p=-1$, that:
\[
\Q^{1/k}_{k,p}(K_1 + K_2) \geq \Q^{1/k}_{k,p}(K_1) + \Q^{1/k}_{k,p}(K_2)  \;\;\; \forall k p \leq 1 . 
\]
In particular, this holds for all $k=1,\ldots,n-1$ when $p \leq 0$. Indeed, this follows from the classical Brunn--Minkowski inequality for $P_F (K_1 + K_2) = P_F K_1 + P_F K_2$ and the  reverse triangle inequality for the $L^{kp}$-norm when $kp \leq 1$. A nice feature is that this extends to all compact sets $K_1,K_2$ (by Lusternik's extension of the Brunn--Minkowski inequality to compact sets \cite[Section 8]{BuragoZalgallerBook}). 

This suggests that perhaps there is some Brunn--Minkowski-type theory for the $L^p$-moment quermassintegrals when $p \leq 0$, and of particular interest is the affine-invariant case $p=-n$. 

\medskip

It will be more convenient to use the following normalization, already defined in the Introduction:
\[
\I_{k,p}(K) := \frac{\Q_{k,p}(K)}{\Q_{k,p}(B_K)} = \brac{\frac{\int_{G_{n,k}} |P_F K|^{p} \sigma_{n,k}(dF)}{\int_{G_{n,k}} |P_F B_K|^{p} \sigma_{n,k}(dF)}}^{1/p} . 
\]
Note that $\I_{k,p}(B) = 1$ for any Euclidean ball $B$ and all $k,p$, that $\I_{n,p}(K) = 1$ for all $p$, that by Jensen's inequality:
\[
[-n,1] \ni p \mapsto \I_{k,p}(K)  \text{ is non-decreasing} ,
\]
and that Theorems \ref{thm:main} and \ref{thm:Q} imply:
\begin{equation} \label{eq:Igeq1}
\I_{k,p}(K) \geq 1 \;\;\; \forall p \geq -n ,
\end{equation}
with equality for $p > -n$ ($p=-n$) if and only if $K$ is a Euclidean ball (ellipsoid). 

\smallskip

In the classical case $p=1$, Alexandrov's inequalities \cite[(7.67)]{Schneider-Book-2ndEd}, \cite[Subsection 20.2]{BuragoZalgallerBook} (a particular case of the Alexandrov--Fenchel inequalities) assert that:
\[
\I_{1,1}(K) \geq \I^{1/2}_{2,1}(K) \geq \ldots \geq \I^{1/k}_{k,1}(K) \geq \ldots \geq \I^{1/(n-1)}_{n-1,1}(K) \geq \I^{1/n}_{n,1}(K) = 1 .
\]

In view of all of the above, the following was proved by Lutwak for $p=-1$ and conjectured to hold for $p=-n$ in \cite{Lutwak-Harmonic}
(see also \cite[Problem 9.5]{GardnerGeometricTomography2ndEd}):

\begin{conj} \label{conj:sec-AF}
For all $p \in [-n,0]$ and for any convex body $K$ in $\R^n$:
\[
\I_{1,p}(K) \geq \I^{1/2}_{2,p}(K) \geq \ldots \geq \I^{1/k}_{k,p}(K) \geq \ldots \geq \I^{1/(n-1)}_{n-1,p}(K) \geq \I^{1/n}_{n,p}(K) = 1 .
\]
\end{conj}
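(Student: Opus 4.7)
\smallskip

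\textbf{Proof proposal.} The plan is to reduce the conjecture, in the range where it is accessible, to the sharp isoperimetric inequality $\I^{(G)}_{k,p}\geq 1$ of Theorem \ref{thm:Q} applied fibrewise inside an $m$-dimensional subspace $G\subset\R^n$. This approach succeeds precisely when $m\geq -p$, yielding the ``half'' form stated as Theorem \ref{thm:intro-half-conj}; the complementary regime $1\leq k<m<-p$ looks substantially harder and seems to lie beyond what this paper's tools can reach. Write $M_{k,p}(K):=\int_{G_{n,k}}|P_F K|^p\sigma_{n,k}(dF)$ for brevity and use the identity $P_F K=P_F(P_G K)$ for $F\subset G$. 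The Haar measure on $G_{n,k}$ disintegrates through $G_{n,m}$ (the standard ``linear'' Blaschke--Petkantschin-type formula: pick $G$ uniformly in $G_{n,m}$, then $F$ uniformly in $G_{G,k}$), which gives
\[
M_{k,p}(K)=\int_{G_{n,m}}M^{(G)}_{k,p}(P_G K)\,\sigma_{n,m}(dG),
\]
where $M^{(G)}_{k,p}$ is the analogue of $M_{k,p}$ computed inside the Euclidean space $G$.

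\smallskip

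For each fixed $G$, apply Theorem \ref{thm:Q} inside $G$ (or Theorem \ref{thm:main} at the endpoint $p=-m$) to the convex body $P_G K\subset G$. For $p\in[-m,0)$ this produces the pointwise bound
\[
M^{(G)}_{k,p}(P_G K)\leq |B_2^k|^p\bigl(|P_G K|/|B_2^m|\bigr)^{kp/m},
\]
the right-hand side being the value achieved by the Euclidean ball in $G$ of the same $m$-volume as $P_G K$ (recall $p<0$ inverts the direction). Integrating over $G_{n,m}$ and then invoking Jensen's inequality for the concave function $x\mapsto x^{k/m}$ (since $k\leq m$),
\[
\int_{G_{n,m}}|P_G K|^{kp/m}\sigma_{n,m}(dG)=\int_{G_{n,m}}\bigl(|P_G K|^p\bigr)^{k/m}\sigma_{n,m}(dG)\leq M_{m,p}(K)^{k/m},
\]
yields the combined inequality $M_{k,p}(K)\leq |B_2^k|^p |B_2^m|^{-kp/m}M_{m,p}(K)^{k/m}$. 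Raising to the negative power $1/p$ flips this bound, and normalizing both sides by the corresponding quantities for a Euclidean ball $B_K$ of the same volume as $K$ (for which the chain is an equality) extracts exactly $\I^{1/k}_{k,p}(K)\geq \I^{1/m}_{m,p}(K)$. The case $p=0$ follows either by continuity or, more transparently, by repeating the argument with $\log|P_\cdot K|$ in place of $|P_\cdot K|^p$, where Jensen degenerates to an identity.

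\smallskip

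For equality, both Jensen and the inner isoperimetric step must be tight. The first forces $G\mapsto |P_G K|$ to be constant on $G_{n,m}$; when $m<n$ this already implies $K$ is a Euclidean ball (constancy of all $m$-dimensional projection volumes is a classical characterization). For $p>-m$ the inner step of Theorem \ref{thm:Q} further forces every $P_G K$ to be a Euclidean ball, consistent with the above. The only genuinely interesting equality case is $k<m=n$ with $p=-n$: Jensen is then vacuous (since $M_{n,-n}(K)=|K|^{-n}$ is a single number), the inner step is Theorem \ref{thm:main} in dimension $n$, and so $K$ may be any ellipsoid by Theorem \ref{thm:equality}.

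\smallskip

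The principal obstacle is precisely the constraint $p\geq -m$. For $p<-m$ the inner bound $\I^{(G)}_{k,p}\geq 1$ is false even for ellipsoids by Remark \ref{rem:optimal-p}, and so the scheme collapses; in that regime the Euclidean ball $B^{(G)}_{P_G K}$ is no longer the minimizer inside $G$, and our fibrewise reduction loses its footing. Reaching the remaining range $1\leq k<m<-p$ would seem to demand either a sharper-than-affine isoperimetric inequality inside $m$-dimensional subspaces, or a genuinely different integral-geometric decomposition of $M_{k,p}(K)$ that does not pass through projecting onto an intermediate $G\in G_{n,m}$. This is, I believe, the fundamental reason the conjecture in its full strength remains open.
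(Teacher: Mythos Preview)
Your argument for the inequality is essentially the paper's own: disintegrate the Haar measure on $G_{n,k}$ through $G_{n,m}$, apply the isoperimetric inequality $\I^{(G)}_{k,p}(P_G K)\geq 1$ fibrewise inside each $G\in G_{n,m}$ (valid precisely when $p\geq -m$), and then use Jensen. The paper packages the first two steps as Proposition~\ref{prop:bootstrap}, obtaining $\I^{1/k}_{k,-q}(K)\geq \I^{1/m}_{m,-qk/m}(K)$, and then invokes Jensen in the form of monotonicity of $p\mapsto \I_{m,p}(K)$; you combine the two steps before taking the $1/p$-th root, but this is the same computation rearranged. Your identification of the obstruction at $p<-m$ also matches the paper's Remark~\ref{rem:Petty-stuck}.

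There is, however, a genuine error in your equality analysis. You assert that constancy of $G\mapsto |P_G K|$ on $G_{n,m}$ for $m<n$ ``already implies $K$ is a Euclidean ball (constancy of all $m$-dimensional projection volumes is a classical characterization)''. This is false: for $m=1$ there are bodies of constant width, and for $m=n-1$ there are (Blaschke's) bodies of constant brightness, none of which need be balls. The paper avoids this trap by reversing the order of the two equality conditions: equality in the \emph{inner} isoperimetric step forces every $P_G K$ to be a Euclidean ball (for $p>-m$) or an ellipsoid (for $p=-m$), whence $K$ itself is a ball or ellipsoid by \cite[Corollary 3.1.6 and Theorem 3.1.7]{GardnerGeometricTomography2ndEd}; only \emph{then} does one use the Jensen equality to upgrade ellipsoid to ball when $m<n$ (an ellipsoid with constant $m$-projection volumes is indeed a ball). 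Your conclusion for the case $k<m=n$, $p=-n$ is correct as stated.
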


Our isoperimetric inequality (\ref{eq:Igeq1}) establishes the inequality between each of the terms and the last one. Theorem \ref{thm:intro-half-conj} from the Introduction, which we repeat here for convenience, confirms ``half" of the above conjecture. 

\begin{thm} \label{thm:half-conj}
For every $p \in [-n,0]$ and $1 \leq k \leq m \leq n$:
\[
\I^{1/k}_{k,p}(K) \geq \I^{1/m}_{m,p}(K) ,
\]
for any convex body $K$ in $\R^n$ whenever $m \geq -p$. When $k < m < n$, equality holds for $p \geq -m$ if and only if $K$ is a Euclidean ball. When $k < m = n$, equality holds for $p > -n$ ($p=-n$) if and only if $K$ is a Euclidean ball (ellipsoid). 
\end{thm}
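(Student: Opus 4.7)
The plan is to apply the isoperimetric inequality of Theorem \ref{thm:Q} \emph{inside} each $F \in G_{n,m}$, viewed as an $m$-dimensional Euclidean space, and then average over $F$ using the standard disintegration of the Haar measure on the Grassmannian. The hypothesis $m \geq -p$ is precisely what is needed to invoke Theorem \ref{thm:Q} in dimension $m$; the case $k = m$ being trivially an equality, we assume $k < m$.

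In more detail, for each $F \in G_{n,m}$, the body $P_F K$ is a convex body in $F$, and Theorem \ref{thm:Q} applied to it (with $n$ replaced by $m$, valid since $k \leq m-1$ and $p \geq -m$) gives a lower bound on the $L^p$-norm of $G \mapsto |P_G(P_F K)|$ over $(G_{F,k},\sigma_{F,k})$:
\[
\brac{\int_{G_{F,k}} |P_G P_F K|^p \sigma_{F,k}(dG)}^{1/p} \geq |B_2^k|\brac{|P_F K|/|B_2^m|}^{k/m},
\]
the right-hand side being the (constant) $k$-dimensional projection size of the Euclidean ball in $F$ of volume $|P_F K|$. Using transitivity $P_G P_F K = P_G K$ for $G \in G_{F,k}$, raising to the $p$-th power (which flips the direction for $p<0$), and integrating against $\sigma_{n,m}$, the standard disintegration
\[
\int_{G_{n,m}} \int_{G_{F,k}} f\,d\sigma_{F,k}\,d\sigma_{n,m} = \int_{G_{n,k}} f\,d\sigma_{n,k}
\]
(which follows from $\SO(n)$-invariance and uniqueness of Haar measures) transforms the left-hand side into $\int_{G_{n,k}} |P_G K|^p d\sigma_{n,k}$ and the right-hand side into a multiple of $\int_{G_{n,m}} |P_F K|^{kp/m} d\sigma_{n,m}$. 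Finally, since $k \leq m$ and $p \leq 0$ force $p \leq kp/m \leq 0$, the monotonicity of $L^q$-norms on the probability space $(G_{n,m},\sigma_{n,m})$ in the negative-exponent regime yields
\[
\int_{G_{n,m}} |P_F K|^{kp/m} d\sigma_{n,m} \leq \brac{\int_{G_{n,m}} |P_F K|^p d\sigma_{n,m}}^{k/m}.
\]
Chaining these three inequalities and extracting $1/p$-powers (flipping signs a final time) gives exactly $\I_{k,p}^{1/k}(K) \geq \I_{m,p}^{1/m}(K)$ once the factors $|B_2^k|,|B_2^m|$ cancel against the normalizers of $\I_{\cdot,p}$. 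The case $p=0$ follows by continuity in $p$.

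For the equality analysis I would trace which step can be tight. The $L^q$-monotonicity step forces $|P_F K|$ to be $\sigma_{n,m}$-a.e., and hence everywhere by continuity, constant on $G_{n,m}$. The inner application forces $P_F K$ to be a Euclidean ball in $F$ for every $F \in G_{n,m}$ when $p > -m$, and an ellipsoid when $p = -m$ (the latter via Theorem \ref{thm:equality} applied within $F$). For $k < m < n$ and $p > -m$, the first two conditions immediately force $K$ to be a Euclidean ball, since $m \leq n-1$ and $m \geq 2$ mean every $2$-plane sits inside some $F \in G_{n,m}$, so every $2$-dim projection of $K$ is a disk of a common radius. For $p = -m$, combining the classical characterization that a convex body with all $2$-dimensional projections elliptical is itself an ellipsoid (passing from the ellipsoidal $m$-dim projections down to their $2$-dim subprojections) with the constant-$m$-dim-projection-volume constraint (which rules out non-spherical ellipsoids) again yields $K$ is a Euclidean ball. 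The boundary case $k < m = n$ reduces to Theorem \ref{thm:Q} and Theorem \ref{thm:equality}, since $\I_{n,p}(K) \equiv 1$.

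The main obstacle I expect is the delicate equality analysis at the endpoint $p = -m$ with $k < m < n$, where one must upgrade ``every $P_F K$ is an ellipsoid and $|P_F K|$ is constant in $F$'' to ``$K$ is a Euclidean ball,'' relying on a Bertrand--Brunn-style ellipsoidal characterization plus the volume-constancy observation. The inequality itself is a clean chain of three well-understood inputs -- inner isoperimetry, Grassmannian disintegration, and negative-exponent $L^q$-monotonicity -- mirroring Gardner's proof \cite[Theorem 7.4]{Gardner-DualAffineQuermassintegrals} of the dual statement in the setting of sections.
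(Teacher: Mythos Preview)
Your proposal is correct and follows essentially the same route as the paper's proof: the paper packages your first two steps (inner isoperimetry via Theorem~\ref{thm:Q} applied to $P_F K$, together with the Grassmannian disintegration) into a separate Proposition~\ref{prop:bootstrap}, and then applies Jensen's inequality exactly as you do. For the equality analysis the paper cites \cite[Corollary 3.1.6 and Theorem 3.1.7]{GardnerGeometricTomography2ndEd} directly for the step ``all $m$-dimensional projections are balls (ellipsoids) $\Rightarrow$ $K$ is a ball (ellipsoid)'', whereas you pass through the equivalent $2$-dimensional characterization; both are standard and amount to the same thing.
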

This confirms the conjecture for all $p \in [-2,0]$ and recovers the case $p=-1$ established by Lutwak in \cite{Lutwak-Harmonic}. Our argument is very similar to the one used by  Lutwak; however, instead of relying on Petty's projection inequality, we have the full strength of Theorem \ref{thm:main} at our disposal, which is crucial for handling the range $p < -(k+1)$. 

\medskip

As remarked in the Introduction, the analogous statement for the dual $L^p$-moment quermassintegrals $\tilde \I_{k,p}$ was established by Gardner \cite[Theorem 7.4]{Gardner-DualAffineQuermassintegrals} in exactly the same corresponding range of parameters as in Theorem \ref{thm:half-conj}. Namely, defining
\[
\tilde \I_{k,p}(K)  := \brac{\frac{\int_{G_{n,k}} |K \cap F|^{p} \sigma_{n,k}(dF)}{\int_{G_{n,k}} |B_K \cap F|^{p} \sigma_{n,k}(dF)}}^{1/p} ,
\]
Gardner proved that $\tilde \I^{1/k}_{k,p}(K) \leq \tilde \I^{1/m}_{m,p}(K)$ for an arbitrary bounded Borel set $K$ and for all $1 \leq k \leq m \leq n$ and $p \in (0,m]$, with precise characterization of the equality conditions. However, he also showed in \cite[Theorem 7.7]{Gardner-DualAffineQuermassintegrals} that an analogous statement to Conjecture \ref{conj:sec-AF} cannot hold for $p=n$, $k=1$ and $2 \leq m \leq n-1$: any origin-symmetric star body $K$ which is not an ellipsoid satisfies $\tilde \I_{1,n}(K) = \tilde \I_{n,n}(K) = 1$, but $\tilde \I_{m,n}(K) < 1$. This troubling sign is likely a special feature of the dual setting when $k=1$, since $\tilde \I_{1,n}(K) = 1$ for all origin-symmetric star bodies, whereas  $\tilde \I_{m,n}(K) = 1$ if and only if $K$ is an origin-symmetric ellipsoid (up to a null-set) when $2 \leq m \leq n-1$ \cite[Corollary 7.5]{Gardner-DualAffineQuermassintegrals}. This abnormal behavior of the case $k=1$ does not occur in the primal setting, thanks to the equality conditions of the Blaschke--Santal\'o inequality. 

\begin{proposition} \label{prop:bootstrap}
For all $1 \leq k \leq  m \leq n$ and $q \leq m$:
\begin{equation} \label{eq:I-move}
\I^{1/k}_{k,-q}(K) \geq \I^{1/m}_{m,-q\frac{k}{m}}(K) . 
\end{equation}
When $k < m$, equality holds for $q < m$ ($q=m$) if and only if $K$ is a Euclidean ball (ellipsoid).
\end{proposition}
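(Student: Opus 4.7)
The proof proceeds by bootstrapping Theorem \ref{thm:Q} (with Theorem \ref{thm:main} as its $p=-m$ endpoint) through a Blaschke--Petkantschin-type disintegration of Haar measure on Grassmannians. The guiding idea is to apply Theorem \ref{thm:Q} \emph{within} each ambient $m$-dimensional subspace $F \in G_{n,m}$ to the projected body $P_F K$, and then integrate in $F$.

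Concretely, fix $F \in G_{n,m}$ and regard $P_F K$ as a convex body in the $m$-dimensional Euclidean space $F$. Since $q \leq m$, the choice $p = -q$ lies in the admissible range of Theorem \ref{thm:Q} applied inside $F$, and yields
\[
\frac{|B_2^m|}{|B_2^k|}\Bigl(\int_{G_{F,k}} |P_{F'} K|^{-q}\,\sigma_{F,k}(dF')\Bigr)^{-1/q} \;\geq\; |B_2^m|^{1-k/m}\,|P_F K|^{k/m},
\]
with equality if and only if $P_F K$ is a Euclidean ball (when $q<m$) or an ellipsoid (when $q=m$). Raising to the $-q$ power and integrating over $F \in G_{n,m}$ against $\sigma_{n,m}$, and using that the iterated Haar measure pushes forward to $\sigma_{n,k}$ under $(F,F')\mapsto F'$ (by uniqueness of the $SO(n)$-invariant probability on $G_{n,k}$, cf.\ \cite[Chapter 7]{SchneiderWeil-Book}), one obtains
\[
|B_2^k|^{q}\,|B_2^m|^{-qk/m}\!\int_{G_{n,k}} |P_{F'} K|^{-q}\,\sigma_{n,k}(dF') \;\leq\; \int_{G_{n,m}} |P_F K|^{-qk/m}\,\sigma_{n,m}(dF).
\]
A direct computation shows that both sides of this inequality coincide for $K=B_K$ (equal to $|B_2^m|^{-qk/m}(|K|/|B_2^n|)^{-qk/n}$), so dividing by the ball expression and raising to the power $-1/(qk)$ recovers precisely (\ref{eq:I-move}).

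The remaining task, and the only subtle one, is the equality analysis. Equality in the integrated inequality forces equality in the pointwise Theorem~\ref{thm:Q} step for $\sigma_{n,m}$-a.e.\ $F$, and hence by continuity for every $F \in G_{n,m}$: when $q < m$ this means every $m$-dimensional projection of $K$ is a Euclidean ball, while when $q = m$ the equality case (Theorem \ref{thm:equality}) forces every such projection to be an ellipsoid. If $m = n$ we are done since $P_F K = K$; if $k < m < n$, so that $m \geq 2$, we appeal to the classical characterization of ellipsoids (resp.\ Euclidean balls) by their $m$-dimensional projections, which reduces by induction on codimension to the Blaschke case $m = n-1$ (see \cite[Section 3.1]{GardnerGeometricTomography2ndEd}).

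The inequality portion of the proof is essentially routine once Theorem \ref{thm:Q}/\ref{thm:main} is in hand; the main conceptual obstacle is the equality analysis, specifically promoting ``every $m$-projection of $K$ is an ellipsoid'' to ``$K$ itself is an ellipsoid'' in the endpoint case $q=m$, which is not self-contained and relies on classical projection characterizations from geometric tomography.
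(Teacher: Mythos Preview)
Your argument is correct and follows essentially the same route as the paper: disintegrate $\sigma_{n,k}$ through $G_{n,m}$, apply the isoperimetric inequality (\ref{eq:Igeq1}) to $P_F K$ inside each $F$, integrate, normalize by the ball, and for equality invoke the projection characterizations of balls/ellipsoids from \cite[Section 3.1]{GardnerGeometricTomography2ndEd}. The only small omission is that you implicitly treat $q>0$ when raising to the $-q$ power; the paper notes separately that for $q<0$ the pointwise inequality reverses but so does the $-q$-th root, and that $q=0$ follows by the obvious limiting/logarithmic variant.
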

\begin{proof}
Applying (\ref{eq:Igeq1}) on $E$ for the convex body $P_E K$ in the inner integral below, if $0 < q \leq m = \dim E$ we have:
\begin{align}
 \nonumber & \int_{G_{n,k}} |P_F K|^{-q} \sigma_{n,k}(dF) = \int_{G_{n,m}} \int_{G_{E,k}} |P_F P_E K|^{-q} \sigma_{E,k}(dF) \sigma_{n,m}(dE) \\
\label{eq:I-move-2} & \leq  c_1 \int_{G_{n,m}} |P_E K|^{-q \frac{k}{m}} \sigma_{n,m}(dE) .
\end{align}
Taking the $-q$-th root, and then the $k$-th root, we obtain:
\[
\I^{1/k}_{k,-q}(K)   \geq c_2 \I^{1/m}_{m,-q\frac{k}{m}}(K)  ,
\] 
for some constants $c_1,c_2 > 0$ independent of $K$, for which equality holds when $K$ is a Euclidean ball; it follows that necessarily $c_2=1$. 
Similarly, if $q < 0$, we obtain the first inequality only reversed, and after taking the $-q$-th root, it remains in the correct direction for us. The case $q=0$ follows similarly.

Finally, if $k < m$ and we have equality in (\ref{eq:I-move}), or equivalently in (\ref{eq:I-move-2}), it follows that $\I^E_{k,-q}(P_E K) = 1$ for $\sigma_{n,m}$-almost-every $E \in G_{n,m}$, and hence for all $E$ by continuity (here $\I^E_{k,-q}$ denotes the functional $\I_{k,-q}$ acting on convex bodies in $E$ instead of $\R^n$).
 Recalling the equality cases in (\ref{eq:Igeq1}), we deduce when $q < m$ ($q=m$) that $P_E K$ is a Euclidean ball (ellipsoid) in $E$ for all $E \in G_{n,m}$. If $m=n$ this concludes the proof; otherwise, $1 \leq k < m \leq n-1$, and it follows that $K$ itself is a Euclidean ball (ellipsoid) by \cite[Corollary 3.1.6 (Theorem 3.1.7)]{GardnerGeometricTomography2ndEd}. 
\end{proof}

\begin{proof}[Proof of Theorem \ref{thm:half-conj}]
For any $0 \leq q \leq m$ and $1 \leq k \leq m \leq n$,
apply Proposition \ref{prop:bootstrap} followed by Jensen's inequality:
\[
\I^{1/k}_{k,-q}(K) \geq \I^{1/m}_{m,-q \frac{k}{m}}(K) \geq \I^{1/m}_{m,-q}(K) .
\]
Equality between the first and last terms implies equality throughout. If $k < m$, equality in the first inequality occurs by Proposition \ref{prop:bootstrap} when $q < m$ ($q=m$) if and only if $K$ is a Euclidean ball (ellipsoid). Equality in the second (Jensen's) inequality occurs when $q > 0$ if and only if $G_{n,m} \ni E \mapsto |P_E K|$ is constant, which for an ellipsoid $K$ and $m \leq n-1$ implies that it must be a Euclidean ball; when $q=0$ we cannot have $q=m$ and so $K$ is already known to be a ball. \end{proof}

\begin{rem} \label{rem:Petty-stuck}
Observe that when $m=k+1$, the above argument only relies on Petty's projection inequality. Consequently, Petty's inequality in combination with Jensen's inequality directly yield for any $k = 2,\ldots,n-2$:
\[
\I^{1/k}_{k,-(k+1)}(K) \geq \I^{1/(k+1)}_{k+1, -k}(K) \geq \I^{1/(k+1)}_{k+1, -(k+2)}(K) . 
\]
Continuing the chain of inequalities, one obtains:
\[
\I^{1/k}_{k,-(k+1)}(K) \geq  \I^{1/(n-1)}_{n-1, -n}(K) ,
\]
which by a final application of Petty's projection inequality is bounded below by $1$. We conclude (without relying on Theorem \ref{thm:main} and only using Petty's inequality) that:
\begin{equation} \label{eq:wrong-moment}
\I_{k,-(k+1)}(K) \geq 1  \;\;\;  \forall k=2,\ldots , n-2 . 
\end{equation}
It does not seem possible to improve the $-(k+1)$-moment in (\ref{eq:wrong-moment}) to the optimal one $-n$ from Conjecture \ref{conj:Lutwak} 
by using similar bootstrap arguments. 
\end{rem}

\section{Concluding remarks} \label{sec:conclude}

\subsection{Extension to compact sets} \label{subsec:compact}

In \cite[Problem 9.4]{GardnerGeometricTomography2ndEd}, Gardner asks whether it would be possible to extend Lutwak's Conjecture \ref{conj:Lutwak} 
to arbitrary compact sets. Certainly, our proof of Proposition \ref{prop:key} (in both Sections \ref{sec:main} and \ref{sec:convexity}) employed convexity in an essential way, and it is not hard to show that the main claims there are simply false for general compact sets. However, the end result of Theorem \ref{thm:main} may very well be valid for general compact sets (as it is hard to imagine a non-convex set which would be more efficient than a Euclidean ball). We briefly provide several remarks in this direction. 

\medskip

First, note that Lemma \ref{lem:convex} only requires for each $K^w$ to be connected. With a little more work, we can thus establish Theorem \ref{thm:Steiner} for a fixed direction $u \in \S^{n-1}$, for compact sets $K$ so that for each $E \in G_{u^{\perp},k-1}$, every section of $K$ parallel to $E^{\perp}$ is connected. However, this property will be destroyed after applying Steiner symmetrization in a consecutive sequence of directions, and so it is not clear how to exploit this to obtain the end result of Theorem \ref{thm:main}. 

\medskip

Second, observe that the validity of the inequality (\ref{eq:main}) of Conjecture \ref{conj:Lutwak} immediately extends to compact sets $K$ whose $k$-dimensional projections are all convex. Indeed, simply apply Theorem~\ref{thm:main} to $\conv(K)$, the convex-hull of $K$, which can only increase the volume of $K$ while preserving the volumes of all $k$-dimensional projections. Equality occurs if and only if $\conv(K)$ is an ellipsoid and $|K| = |\conv(K)|$. 
In particular, this already shows that (\ref{eq:main}) for $k=1$ remains valid for all \emph{connected} compact sets $K$. However, we also observe that:
\begin{thm}[Blaschke--Santal\'o for compact sets]
The inequality (\ref{eq:main}) of Conjecture \ref{conj:Lutwak} for $k=1$ remains valid for arbitrary compact sets $K$, namely $\Phi_1(K) \geq \Phi_1(B_K)$. 
\end{thm}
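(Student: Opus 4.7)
The plan is to reduce to the convex case of Theorem~\ref{thm:main} with $k=1$ (equivalent to the classical Blaschke--Santal\'o inequality).

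The key observation is that when $K$ is \emph{connected}, the projection $P_F K$ is the continuous linear image of a connected set, hence a subinterval of the line $F \in G_{n,1}$. Its one-dimensional Lebesgue measure therefore agrees with the length of that interval:
\[
|P_F K| = h_K(\theta) + h_K(-\theta) = h_{K-K}(\theta) = |P_F \conv(K)|,
\]
for any unit vector $\theta \in F$. Consequently $\Phi_1(K) = \Phi_1(\conv(K))$, and applying Theorem~\ref{thm:main} to the convex body $\conv(K)$ yields $\Phi_1(\conv(K)) \geq \Phi_1(B_{\conv(K)})$. Since $|\conv(K)| \geq |K|$ one has $B_{\conv(K)} \supseteq B_K$ and hence $\Phi_1(B_{\conv(K)}) \geq \Phi_1(B_K)$ by $1$-homogeneity, whence $\Phi_1(K) \geq \Phi_1(B_K)$ for any connected compact $K$.

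For a general compact $K$ with connected components $K_1, \dots, K_m$, the plan is to reduce to the connected case via successive translations. Given two components $K_i, K_j$ lying on opposite sides of a hyperplane normal to some $u \in \S^{n-1}$, I would consider the translation shadow system $K(t) := \bigcup_{\ell \neq j} K_\ell \cup (K_j + t u)$, and let $t$ vary toward the value at which $K_j + t u$ just meets another component; throughout this motion the components remain disjoint, so $|K(t)| = |K|$ is preserved, and after iterating over all component pairs one arrives at a connected compact $\tilde K$ with $|\tilde K| = |K|$. The connected case then gives $\Phi_1(\tilde K) \geq \Phi_1(B_{\tilde K}) = \Phi_1(B_K)$, and it remains to show $\Phi_1(K) \geq \Phi_1(\tilde K)$.

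The main obstacle is proving that the translation shadow system does not decrease $\Phi_1$. For $F \in G_{n,1}$ perpendicular to $u$, the projection $|P_F K(t)|$ is independent of $t$; for $F$ parallel to $u$, the one-dimensional projections of $K_i$ and $K_j + t u$ become more overlapping as the components approach each other, so $|P_F K(t)|$ is non-increasing in the direction of merger, giving the correct direction. For generic $F$, however, the two projections shift relative to each other by $t \scalar{u,\theta}$, and $|P_F K(t)|$ is typically not monotone in $t$. I expect the required monotonicity of the integrated functional $\int_{G_{n,1}} |P_F K(t)|^{-n} \sigma(dF)$ to follow from an analog of Proposition~\ref{prop:key} adapted to translation shadow systems of non-convex sets, exploiting the Brunn--Minkowski inequality applied to level sets of an auxiliary body in $\R^{n+1}$ that encodes both components simultaneously, together with the Blaschke--Petkantschin-type identity of Theorem~\ref{thm:BP} to convert the integral on $G_{n,1}$ into one on $\S^{n-1}$ amenable to direct analysis.
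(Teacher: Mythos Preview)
Your reduction for \emph{connected} compact $K$ is correct, and in fact is already noted in the paper immediately before the theorem. The genuine gap is in the passage from general compact sets to connected ones.

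First, a compact set need not have finitely many connected components --- a Cantor set in $\R^n$ has uncountably many --- so the iteration ``over all component pairs'' is not even well-defined as stated, and there is no obvious limiting argument since the intermediate sets $K(t)$ need not converge to anything connected.

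Second, and more fundamentally, even for two components the monotonicity $\Phi_1(K) \geq \Phi_1(\tilde K)$ is not established, and the mechanism you hope to invoke does not survive the loss of convexity. The joint convexity in Proposition~\ref{prop:key} rests on the representation (\ref{eq:Fubini}) via support functions, which requires each $K^w$ to be convex (or at least connected). For a disjoint union $K_1 \cup (K_2 + tu)$, the one-dimensional projection $|P_\theta K(t)|$ is the Lebesgue measure of a union of two intervals sliding past each other; this is piecewise affine in $t$ but in general neither convex nor monotone, and there is no Brunn--Minkowski-type concavity for measures of \emph{unions}. So the ``analog of Proposition~\ref{prop:key} adapted to translation shadow systems of non-convex sets'' that you expect simply fails, and with it the level-set argument.

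The paper's proof avoids all of this via the Rogers--Brascamp--Lieb--Luttinger inequality: one forms the origin-symmetric convex body
\[
\tilde K := \bigcap_{\theta \in \S^{n-1}} \{ x : |\scalar{x,\theta}| \leq |P_\theta K|/2 \},
\]
uses RBLL (applied to indicators of the one-dimensional projections) to show $|K| \leq |\tilde K|$, and observes the trivial $|P_\theta \tilde K| \leq |P_\theta K|$ for every $\theta$. Then $\Phi_1(K) \geq \Phi_1(\tilde K) \geq \Phi_1(B_{\tilde K}) \geq \Phi_1(B_K)$, handling all compact sets at once without any component decomposition or shadow-system monotonicity.
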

This provides an interpretation of the Blaschke--Santal\'o inequality which is translation invariant, and which remains valid without any assumptions on convexity nor choice of an appropriate center for $K$ (see also \cite[Corollary 6.4]{LYZ-MomentEntropyInqs}, as well as \cite{Lutwak-ExtendedAffineSurfaceArea,Lehec-FunctionalSantalo} for a version for star-bodies with barycenter at the origin). 
\begin{proof}
Given the compact set $K$, define the following convex body:
\[
\tilde K := \cap_{\theta \in \S^{n-1}} \set{ x \in \R^n \; ; \; \abs{\scalar{x,\theta}} \leq \abs{P_\theta K}/2} . 
\]
We claim that:
\begin{equation} \label{eq:vol-increases}
|K| \leq |\tilde K| . 
\end{equation}
To see this, recall that the Rogers--Brascamp--Lieb--Lutinger inequality \cite{Rogers-BLL,BrascampLiebLuttinger}
 asserts that:
\[
\int_{\R^n} \Pi_{i=1}^m f_i(\scalar{x,\theta_i}) dx \leq \int_{\R^n} \Pi_{i=1}^m f^*_i(\scalar{x,\theta_i}) dx ,
\]
for any measurable functions $f_i : \R \rightarrow \R_+$ and directions $\theta_i \in \S^{n-1}$. Here $f_i^*$ denote the symmetric decreasing rearrangement of $f_i$ (see \cite{BrascampLiebLuttinger} for more details). Applying this to $f_i := 1_{\cdot \theta_i \in P_{\theta_i K}}$ (for which $f_i^* = 1_{[-|P_{\theta_i} K|/2,|P_{\theta_i} K|/2]}$), we obtain:
\[
|K| \leq |\cap_{i=1}^{m} \set{ x \in \R^n \; ; \; P_{\theta_i} x \in P_{\theta_i} K }| \leq |\cap_{i=1}^{m} \set{ x \in \R^n\; ; \; \abs{\scalar{x,\theta_i}} \leq \abs{P_{\theta_i} K}/2}|  .
\]
Using an increasing set of directions $\set{\theta_i}_{i=1}^m$ which becomes dense in $\S^{n-1}$, (\ref{eq:vol-increases})  easily follows. 

On the other hand, we clearly have  $|P_{\theta} \tilde K| \leq |P_\theta K|$ for all $\theta \in \S^{n-1}$, and so:
\begin{equation} \label{eq:BLL}
\Phi_1(K) \geq \Phi_1(\tilde K) \geq \Phi_1(B_{\tilde K}) \geq \Phi_1(B_K) . 
\end{equation}
\end{proof}
\begin{rem}
If there is equality between the left and right terms in (\ref{eq:BLL}), we must have equality in all three inequalities. By Theorem \ref{thm:main}, the second equality implies that the (origin-symmetric) convex body $\tilde K$ is an ellipsoid. The third equality implies that $|\tilde K| = |K|$. Utilizing the first equality is non-trivial, see \cite{Christ-EqualityInBLL}, and so we leave the analysis of equality for another occasion. 
\end{rem}
\begin{rem}
The above argument for $k=1$ does not extend to general $k > 1$. The reason is that the analogue of the Rogers--Brascamp--Lieb--Luttinger inequality for projections onto dimension larger than one is false without some type of separability conditions on the projections (as in \cite[Theorem 3.4]{BrascampLiebLuttinger}). This may be seen, for instance, by the sharpness of the Loomis--Whitney inequality (\ref{eq:LW})  on cubes (as opposed to intersection of spherical cylinders). 
\end{rem}

In summary, we presently do not know how to extend Theorem \ref{thm:main} for $k > 1$ to general compact sets. In particular, this applies to the classical case $k=n-1$. A relaxed variant of such a generalization is to replace the projection volume $|P_F K|$ by its integral-geometric version $\int_{F} \chi(K \cap (f + F^{\perp})) df$, where $\chi$ denotes the Euler characteristic; the two versions coincide when $K$ is convex, but the latter may in general be larger than the former. For this relaxed variant, an extension of the case $k=n-1$ to compact sets $K$ with $C^1$ smooth boundary was obtained by Zhang in \cite{Zhang-AffineSobolevInq}. 

\smallskip 

Finally, we remark that the averaged Loomis--Whitney inequality $\Q_{k,0}(K) \geq \Q_{k,0}(B_K)$ from Subsection \ref{subsec:LW} does extend to general compact sets $K$, but this requires a totally different argument than the one presented here and will be verified elsewhere.

\subsection{Simple new proof of Petty's projection inequality} \label{subsec:Petty}

Our approach in this work suggests that the inequality $\Phi_k(K) \geq \Phi_k(B_K)$ should be interpreted as a generalization of the Blaschke--Santal\'o inequality, corresponding to the case $k=1$. It is a priori equally likely that it could be derived by generalizing Petty's projection inequality, corresponding to the other extremal case $k=n-1$. In fact, we have spent a lot of time trying to derive it ``from the Petty side", without success. Our numerous attempts (see e.g. Remark \ref{rem:Petty-stuck}) all ended up with the inequality $\Q_{k,-(k+1)}(K) \geq \Q_{k,-(k+1)}(B_K)$, having the wrong power $-(k+1)$ instead of the conjectured optimal $-n$. It would be interesting to give an alternative proof of Conjecture \ref{conj:Lutwak} for any $k \in \{1,\ldots,n-2\}$ ``from the Petty side". 

However, one useful byproduct of our failed attempts was the discovery of a new proof of Petty's projection inequality, which is arguably the simplest proof we know. In particular, it completely avoids using the Busemann--Petty centroid inequality \cite[Corollary 9.2.7]{GardnerGeometricTomography2ndEd}. Moreover, it seems to be a ``dual version" of the Meyer--Pajor proof of the Blaschke--Santal\'o inequality \cite{MeyerPajor-Santalo}. We conclude this work by describing it. 

\medskip

Given a convex body $K$, recall the definition of the polar projection body $\Pi^* K$, whose associated norm is given by:
\[
 \norm{\theta}_{\Pi^* K} = |P_{\theta^{\perp}} K| = n V(K,n-1 ; [0,\theta]) \;\;, \;\; \theta \in \S^{n-1} ,
 \]
where  $[0,x]$ denotes the segment between the origin and $x$ (e.g. by (\ref{eq:Fedotov})). 
 By homogeneity, equality between the first and last terms above continues to hold for all $\theta \in \R^{n}$. 
Integration in polar-coordinates immediately verifies that:
\[
|\Pi^* K| = \frac{1}{n} \int_{\S^{n-1}} |P_{\theta^{\perp}} K|^{-n} d\theta ,
\]
and so our goal is to show that:
\begin{equation} \label{eq:simple-Petty}
|\Pi^* K| \leq | \Pi^* S_u K| 
\end{equation}
(originally established by Lutwak--Yang--Zhang \cite{LYZ-Lp-PettyProjection,LYZ-OrliczProjectionBodies}). We will in fact show a much stronger claim, from which (\ref{eq:simple-Petty}) immediately follows after integrating over $u^{\perp}$:

\begin{prop}
For all $y \in u^{\perp}$, $|(\Pi^* K)(y)| \leq |(\Pi^* S_u K)(y)|$, where $L(y) = \{ s \in \R \; ; \; y + s u \in L\}$ is the one-dimensional section of $L$ parallel to $u$ at $y$. 
\end{prop}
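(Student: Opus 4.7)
The approach will be to establish the Minkowski-type inclusion
\[
(\Pi^* S_u K)(y) \;\supseteq\; \tfrac{1}{2}\bigl((\Pi^* K)(y) + (\Pi^* R_u K)(y)\bigr),
\]
from which the proposition immediately follows. Indeed, the covariance $\Pi R_u K = R_u \Pi K$ (visible from the very definition of $\Pi$) gives $\Pi^* R_u K = R_u \Pi^* K$, so $(\Pi^* R_u K)(y) = -(\Pi^* K)(y)$; writing $(\Pi^* K)(y) = [a,b]$ one has $\tfrac{1}{2}((\Pi^* K)(y) + (\Pi^* R_u K)(y)) = [-\tfrac{b-a}{2},\tfrac{b-a}{2}]$. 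Since $S_u K$ is $R_u$-invariant, $(\Pi^* S_u K)(y)$ is itself an origin-symmetric interval, hence of length $\geq b-a = |(\Pi^* K)(y)|$.

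To prove the Minkowski inclusion, I plan to show the pointwise inequality
\[
h_{\Pi S_u K}\bigl(y + \tfrac{s_1+s_2}{2} u\bigr) \;\leq\; \tfrac{1}{2}\bigl(h_{\Pi K}(y + s_1 u) + h_{\Pi R_u K}(y + s_2 u)\bigr) \qquad \forall s_1,s_2 \in \R,
\]
which by positive-homogeneity of $h_{\Pi \cdot}$ (the gauge of $\Pi^* \cdot$) is more than sufficient. The plan to establish this is to invoke Cauchy's formula $h_{\Pi L}(\theta) = \frac{1}{2} \int_{\S^{n-1}} |\sscalar{v,\theta}|\, dS_L(v)$ and decompose the surface area measure $S_L$ according to the sign of $\sscalar{v,u}$ into ``top'', ``bottom'' and lateral (``side'') parts. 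Parametrizing $\partial L$ in Steiner coordinates via upper/lower height functions $\phi_\pm^L$ on $P_{u^\perp} L$ (respectively concave/convex for convex $L$, hence differentiable a.e.), one computes that the top contribution to $2 h_{\Pi L}(y + s u)$ equals $\int_{P_{u^\perp} L} |s - \nabla \phi_+^L(z) \cdot y|\, dz$, with an analogous bottom expression; the side contribution depends only on the silhouette $P_{u^\perp} L$ and the width $\ell^L := \phi_+^L - \phi_-^L$ -- invariants of both $R_u$ and $S_u$ -- and therefore cancels between the two sides of the target inequality.

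Using $\phi_\pm^{R_u K} = -\phi_\mp^K$ and $\phi_\pm^{S_u K} = \pm \ell^K/2$, the remaining top and bottom contributions reduce the target inequality, pointwise in $z \in P_{u^\perp} K$, to the elementary claim
\[
|A - B| + |A + B| \;\leq\; |s_1 - \alpha| + |s_1 - \beta| + |s_2 + \alpha| + |s_2 + \beta| ,
\]
where $A := s_1 + s_2$, $B := \alpha - \beta$, $\alpha := \nabla \phi_+^K(z) \cdot y$, $\beta := \nabla \phi_-^K(z) \cdot y$. The left side equals $2\max(|A|,|B|)$; the right side is $\geq 2|A|$ (pair $|s_1 - \alpha| + |s_2 + \alpha| \geq |A|$ and $|s_1 - \beta| + |s_2 + \beta| \geq |A|$ by the triangle inequality) and simultaneously $\geq 2|B|$ (pair instead $|s_1 - \alpha| + |s_1 - \beta| \geq |B|$ and $|s_2 + \alpha| + |s_2 + \beta| \geq |B|$), which finishes the proof. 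The one technicality I anticipate is justifying the Cauchy decomposition for non-smooth $K$; this should follow by polytopal approximation, as $h_{\Pi \cdot}$ and all the boundary integrals in question depend continuously on $K$ in the Hausdorff topology.
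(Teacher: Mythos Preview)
Your proof is correct and takes a genuinely different route from the paper's. The paper works with the linear shadow system $\{K_u(t)\}$ and invokes Shephard's result that $(s,t)\mapsto V(K_u(t),n-1;[0,y+su])$ is jointly convex; combined with the evident evenness, the sublevel set in the $(s,t)$-plane is origin-symmetric convex, so its $t=1$ section $(\Pi^* K)(y)$ is no longer than its $t=0$ section $(\Pi^* S_u K)(y)$. Your argument instead verifies the special instance of this convexity at the three points $(s_1,1),(s_2,-1),(\tfrac{s_1+s_2}{2},0)$ directly, via Cauchy's projection formula and the elementary pointwise inequality $|A-B|+|A+B|\le |s_1-\alpha|+|s_1-\beta|+|s_2+\alpha|+|s_2+\beta|$; this bypasses shadow systems entirely and makes the proof completely self-contained. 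Your claim that the ``side'' surface-area measure $S_L|_{u^\perp\cap\S^{n-1}}$ is invariant under both $R_u$ and $S_u$ is correct (for $v\in u^\perp$ the face $F(L,v)$ fibers over $F(P_{u^\perp}L,v)$ with fiber of length $\ell^L$), and the polytopal approximation you mention is routine. The trade-off: the paper's argument is shorter and slots into its unified shadow-system framework, while yours is more elementary and transparent, isolating exactly the four-term absolute-value inequality that drives the result.
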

\begin{proof}
Fix $y \in u^{\perp}$ and calculate:
\begin{equation} \label{eq:Petty-punch}
|(\Pi^* K)(y)| = \int_{\R} 1_{\norm{y + s u}_{\Pi^* K} \leq 1} ds = \int_{\R} 1_{ V(K,n-1 ; [0,y+ s u]) \leq \frac{1}{n}} ds .
\end{equation}
Consider the linear reflection shadow system $\{K_u(t)\}$ from Lemma \ref{lem:reflection-shadow}. It easily follows from Shephard's paper \cite{Shephard-ShadowSystems} that the function:
\[
\R^2 \ni (s,t) \mapsto f(s,t) := V(K_u(t) , n-1 ; [0, y + su]) \text{ is jointly convex} 
\]
(as the projections of $K_u(t)$ and $[0,y+su]$ onto $u^{\perp}$ do not depend on $t,s$). The function $f$ is also even since:
\[
V(K_u(-t) , n-1 ; [0, y - su]) = V(R_u K_u(t) , n-1 ; R_u [0,y+su]) = V(K_u(t) , n-1 ; [0, y + su])  . 
\]
Hence its level set $\{ (s,t) \in \R^2 \; ; \; V(K_u(t) ; n-1 , [0,y + su]) \leq 1/n \}$ is an origin-symmetric convex body, and so its section at $t=1$ has smaller length than the one at $t=0$:
\[
\int_{\R} 1_{ V(K,n-1 ; [0,y+ s u]) \leq \frac{1}{n}} ds \leq \int_{\R} 1_{V(S_u K,n-1 ; [0,y+ s u]) \leq \frac{1}{n}} ds .
\]
Plugging this into (\ref{eq:Petty-punch}) and rolling back, the assertion follows. 
\end{proof}

Note that instead of fixing $s$ (the $u$-height parameter) and integrating over $y$ (perpendicular to $u$) as in \cite{MeyerPajor-Santalo} and Section \ref{sec:main}, we fix $y$ and integrate over $s$. In all cases, the only inequality used in the proof is between two $(n-k)$-dimensional volumes (which may be thought of as the volumes of two $t$-sections of an $(n-k+1)$-dimensional convex body).

\bibliographystyle{plain} 
\bibliography{../../../ConvexBib}

\end{document}